\newtheorem{defeng}{Definition}[section]
\newtheorem{theorem}[defeng]{Theorem}
\newtheorem{lemma}[defeng]{Lemma}
\newtheorem{proposition}[defeng]{Proposition}
\newtheorem{corollary}[defeng]{Corollary} 
\newtheorem*{th:sf}{Theorem \ref{th:sf}}
\newtheorem*{no-phi-c}{Theorem \ref{no-phi-c}} 
\newtheorem*{th:col}{Theorem \ref{th:col}} 
\newtheorem*{th:lbPsi}{Theorem \ref{th:lbPsi}} 
\newtheorem*{th:bounds}{Theorem \ref{th:bounds}}
\newcommand{\sm}{\smallsetminus} 
\begin{document}

\title{Isolating highly connected induced subgraphs}

\author{Irena Penev\thanks{Universit\'e de Lyon, LIP, UMR 5668, ENS de Lyon - CNRS - UCBL - INRIA. E-mail: irena.penev@ens-lyon.fr. This work was supported by the LABEX MILYON (ANR-10-LABX-0070) of Universit\'e de Lyon, within the program ``Investissements d'Avenir'' (ANR-11-IDEX-0007) operated by the French National Research Agency (ANR). Partially supported by ANR project Stint under reference ANR-13-BS02-0007.}, St\'ephan Thomass\'e\thanks{CNRS, LIP, ENS de Lyon, INRIA, Universit\'e de Lyon. E-mail: stephan.thomasse@ens-lyon.fr and nicolas.trotignon@ens-lyon.fr. Supported by Labex MILYON and \emph{Agence Nationale de la Recherche} under reference \textsc{anr 10 jcjc 0204 01} and ANR project Stint ANR-13-BS02-0007.}, and Nicolas Trotignon\footnotemark[2]}

\maketitle

{\bf\noindent AMS Classification: } 05C75

{\bf\noindent Key words: } connectivity, chromatic number, hereditary classes of graphs, operations on
graphs, extreme decomposition theorem 

\begin{abstract}
 We prove that any graph $G$ of minimum degree greater than
 $2k^2-1$ has a $(k+1)$-connected induced subgraph $H$ such that the
 number of vertices of $H$ that have neighbors outside of $H$ is at
 most $2k^2-1$. This generalizes a classical result of Mader, which states
 that a high minimum degree implies the existence of a highly connected 
 subgraph. We give several variants of our result, and for each of these 
 variants, we give asymptotics for the bounds. We also we compute optimal 
 values for the case when $k=2$. 

 Alon, Kleitman, Saks, Seymour, and Thomassen proved that in
 a graph of high chromatic number, there exists an induced subgraph
 of high connectivity and high chromatic number. We give a new proof of 
 this theorem with a better bound. 
\end{abstract}

\section{Introduction}\label{sec:intro}

All graphs in this paper are finite, simple, and non-null (unless
specified otherwise). A \emph{cutset} of a graph $G$ is a (possibly empty)
subset $C$ of $V(G)$ such that $G \sm C$ is disconnected. A graph is
\emph{$k$-connected} if it contains at least $k+1$ vertices and does
not contain a cutset of size at most $k-1$. We denote by $\chi(G)$ the
chromatic number of $G$. The starting point of this research is the
following theorem.

\begin{theorem}[Alon, Kleitman, Saks, Seymour, and Thomassen~\cite{alon87}]
 \label{alon}
 Let $k$ and $c$ be positive integers, and let $G$ be a graph such
 that $\chi(G) > \max\{c+10k^2+1,100k^3\}$. Then $G$ contains a
 $(k+1)$-connected induced subgraph of chromatic number greater than $c$.
\end{theorem}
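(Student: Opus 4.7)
The plan is to combine the paper's main structural theorem (stated in the abstract) with a standard reduction to large minimum degree, and then iterate.

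First, using the classical observation that deleting a vertex of degree at most $d$ cannot decrease $\chi$ below $d+1$, one iteratively removes vertices of degree at most $2k^2-1$ from $G$. Since $\chi(G)>2k^2$ (indeed $\chi(G)>100k^3$), this preserves the chromatic number throughout, and produces an induced subgraph $G_0\subseteq G$ with $\delta(G_0)\geq 2k^2$ and $\chi(G_0)=\chi(G)$.

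Second, apply the paper's main theorem to $G_0$: there is a $(k+1)$-connected induced subgraph $H\subseteq G_0$ whose boundary $B=\{v\in V(H):N_{G_0}(v)\not\subseteq V(H)\}$ has $|B|\leq 2k^2-1$. If $\chi(H)>c$ we are done; otherwise $\chi(H)\leq c$. Set $A=V(H)\setminus B$ and $C=V(G_0)\setminus V(H)$. Since $B$ separates $A$ from $C$,
\[\chi(G_0)\;\leq\;\chi(G_0\setminus B)+|B|\;=\;\max\bigl(\chi(G_0[A]),\chi(G_0[C])\bigr)+|B|,\]
and $\chi(G_0[A])\leq\chi(H)\leq c$ together with $\chi(G_0)>c+2k^2-1$ forces $\chi(G_0[C])\geq\chi(G_0)-(2k^2-1)>c$.

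Third, recurse on $G_0[C]$: pass again to a minimum-degree-$\geq 2k^2$ subgraph, apply the main theorem, and iterate. Each step either succeeds (yielding a $(k+1)$-connected induced subgraph of $G$ of chromatic number exceeding $c$) or produces $G_{i+1}$ with $\chi(G_{i+1})\geq\chi(G_i)-(2k^2-1)$ and $|V(G_{i+1})|\leq|V(G_i)|-(k+2)$. Telescoping the separator inequality across $t$ steps gives
\[\chi(G)\;\leq\;\max\bigl(c,\chi(G_t)\bigr)+t(2k^2-1).\]

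The hard part is controlling $t$ tightly enough to force success before the iteration degenerates. The first hypothesis $\chi(G)>c+10k^2+1$ is what rules out termination via the ``small-$\chi(G_t)$'' case after only a handful of steps---each iteration costs at most $2k^2-1$ in chromatic number, so a short iteration cannot reach a state with $\chi(G_t)\leq c$ without contradicting the telescoped bound. The second hypothesis $\chi(G)>100k^3$ supplies enough chromatic budget to keep passing to minimum-degree-$2k^2$ subgraphs for as long as needed. The technical crux is to balance these two bounds and, in particular, to handle the edge case in which the iteration stalls because $\chi(G_t)$ drops below $2k^2$ before Case~1 occurs; one natural strengthening is to pass at each step not to an arbitrary min-degree subgraph but to a \emph{$\chi$-critical} one, which automatically satisfies $\delta\geq\chi-1$ and hence has minimum degree far larger than $2k^2$, thereby tightening the iteration bound.
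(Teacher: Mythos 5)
Your first two steps (repeatedly deleting vertices of degree at most $2k^2-1$, then applying Theorem~\ref{th:sf}) are sound and match the paper's setup, but the way you combine the pieces has a genuine gap. From $\chi(G_i)\leq \max\bigl(\chi(G_i[A]),\chi(G_i[C])\bigr)+|B|$ you only get $\chi(G_{i+1})\geq \chi(G_i)-(2k^2-1)$: an additive loss of up to $2k^2-1$ per iteration, and the iteration can only be continued while the current chromatic number exceeds $c+2k^2-1$. The hypothesis guarantees a surplus of only $10k^2+1$ over $c$ (note that $c$ may itself be far larger than $100k^3$, so the second bound buys no additional room), so after roughly five iterations you may land in the window $c<\chi(G_t)\leq c+2k^2-1$, where you have neither produced the desired subgraph nor derived any contradiction from the telescoped inequality; the number of iterations is controlled only by $|V(G)|$, not by a function of $k$ and $c$, so no ``balancing'' of the two hypotheses can close this. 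Your proposed repair (passing to a $\chi$-critical subgraph) increases the minimum degree, but the loss per step comes from the boundary of $H$, not from insufficient minimum degree, so it does not address the problem.

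The missing idea, which is exactly how the paper argues (Proposition~\ref{prop:coloring} already implies the stated theorem, and Theorem~\ref{th:col} sharpens it), is to reuse colours across the cutset instead of paying for it additively. In the paper's notation, with a cut-partition $(A,B,C)$ where $G[A\cup C]$ is $(k+1)$-connected and $|C|\leq 2k^2-1$ (your $B$ is the paper's $C$, your $C$ is the paper's $B$), one recurses into $G[B\cup C]$ only when $\chi(G[B\cup C])>c+2k^2-1$ -- induction on the number of vertices at the \emph{same} threshold, with no loss of chromatic number. Otherwise one colours $G[B\cup C]$ with at most $c+2k^2-1$ colours, of which at most $2k^2-1$ appear on $C$, and, if $\chi(G[A])\leq c$, colours $A$ with $c$ of the colours not used on $C$; this gives a proper colouring of $G$ with at most $c+2k^2-1$ colours, contradicting $\chi(G)>c+2k^2-1$. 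Hence $\chi(G[A\cup C])>c$ and $H=G[A\cup C]$ is the desired subgraph. Without this colour-reuse step (or an equivalent device, such as the weighted argument behind Theorem~\ref{th:col}), the iteration you describe cannot be made to terminate successfully.
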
 

This theorem was improved by Chudnovsky, Penev, Scott, and
Trotignon~\cite{CPST:subst} who showed that the condition $\chi(G) >
\max\{c+2k^2,2k^2+k\}$ is sufficient. The proof
from~\cite{CPST:subst} relies on an ad hoc induction hypothesis,
which roughly states that upper bounds on the chromatic number are
preserved under gluing along a fixed number of vertices.

Here we improve the bound further (we show that the condition $\chi(G) 
> \max\{c+2k-2,2k^2\}$ is sufficient, see Theorem~\ref{th:col}). Our 
proof relies on structural properties. We prove that in any graph
$G$ of high minimum degree, there is a highly connected induced
subgraph $H$ such that only a small number of vertices of $H$ have
neighbors outside of $H$ (see Theorem~\ref{th:sf}). This generalizes a 
classical result on connectivity due to Mader~\cite{mader:4k}. 

There are several variants of the structural result. One variant involves 
classes of graphs built by repeatedly gluing prescribed basic blocks along 
a fixed number $k$ of vertices. We show that these graphs admit a 
``small'' cutset (its size is bounded by a function that depends only on 
$k$) that isolates a basic block (see Theorems~\ref{th:class} and~\ref{th:phi}). 
We provide several bounds for the functions that appear in our results. 

Our results are stated precisely in Section~\ref{sec:main}, and they are proven 
in Sections~\ref{sec:thPart}-\ref{sec:col}.

\section{Main results}
\label{sec:main}

We denote by $\delta(G)$ the minimum degree of a vertex of a graph
$G$, and we denote by $d(G)$ the average degree of $G$. If $G$ is a
graph and $S \subseteq V(G)$, we denote by $N_G(S)$ the 
\emph{neighborhood} of $S$, that is, the set of all vertices in $V(G)
\smallsetminus S$ that have a neighbor in $S$; we denote by $N_G[S]$ 
the {\em closed neighborhood} of $S$, that is, the set of all vertices 
of $G$ that either belong to $S$ or have a neighbor in $S$ (thus, 
$N_G[S] = S \cup N_G(S)$); and we denote by $\partial_G(S)$ the 
\emph{frontier} of $S$, that is, the set of all vertices in $S$ that 
have a neighbor in $V(G) \smallsetminus S$, (thus, $\partial_G(S) = 
N_G(V(G) \sm S)$). If $H$ is an induced subgraph of $G$, we sometimes 
write $N_G(H)$, $N_G[H]$, and $\partial_G(H)$ instead of $N_G(V(H))$, 
$N_G[V(H)]$, and $\partial_G(V(H))$, respectively. For a vertex $v$ 
of $G$, we sometimes write $N_G(v)$ and $N_G[v]$ instead of $N_G(\{v\})$ 
and $N_G[\{v\}]$, respectively. Furthermore, when clear from the context, 
we omit subscripts and write simply $N$ and $\partial$ instead of $N_G$ 
and $\partial_G$, respectively. Our main result is the following theorem. 

\begin{theorem} 
 \label{th:sf} 
 Let $k$ be a positive integer, and let $G$ be a graph. If
 $\delta(G) > 2k^2-1$, then $G$ contains a $(k+1)$-connected induced
 subgraph $H$ such that $\partial(H) \subsetneq V(H)$ and
 $|\partial(H)| \leq 2k^2-1$.
\end{theorem}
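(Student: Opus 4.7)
The plan is to argue by extremality. Let $\mathcal{F}$ be the family of induced subgraphs $H$ of $G$ satisfying $|\partial_G(H)|\le 2k^2-1$ and $\partial_G(H)\subsetneq V(H)$. Since $G$ itself lies in $\mathcal{F}$ (as $\partial_G(G)=\emptyset$), we may choose $H\in\mathcal{F}$ with $|V(H)|$ minimum. Any interior vertex $v\in V(H)\setminus\partial_G(H)$ satisfies $d_H(v)=d_G(v)\ge 2k^2$, giving $|V(H)|\ge 2k^2+1\ge k+2$. I would aim to show that this $H$ is automatically $(k+1)$-connected, which is exactly the desired conclusion.

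Suppose for contradiction that $H$ is not $(k+1)$-connected; then $H$ admits a cutset of size at most $k$. Choose such a cutset $D$ inclusion-minimally, so that every $v\in D$ has a neighbor in every connected component of $H\setminus D$. Write $A_1,\dots,A_\ell$ (with $\ell\ge 2$) for those components, and set $H_i:=G[A_i\cup D]$. A direct calculation, using that $D$ is a cutset (so no $G$-edges between $A_i$ and $A_j$ for $j\ne i$) and its inclusion-minimality (so every $v\in D$ has a neighbor in some $A_j$ with $j\ne i$), yields
\[
\partial_G(H_i)=(A_i\cap\partial_G(H))\cup D \quad\text{and}\quad V(H_i)\setminus\partial_G(H_i)=A_i\setminus\partial_G(H).
\]
Moreover $|V(H_i)|<|V(H)|$. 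Hence a contradiction with the minimality of $H$ will follow from producing an index $i$ with $|A_i\cap\partial_G(H)|\le 2k^2-1-|D|$ and $A_i\not\subseteq\partial_G(H)$, because then $H_i\in\mathcal{F}$.

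Let $I=\{i:A_i\not\subseteq\partial_G(H)\}$. When $|I|\ge 2$, the sets $A_i\cap\partial_G(H)$ ($i\in I$) are pairwise disjoint subsets of $\partial_G(H)$ of total size at most $2k^2-1$, so pigeonhole produces an $i\in I$ with $|A_i\cap\partial_G(H)|\le\lfloor(2k^2-1)/2\rfloor=k^2-1$. Then $|\partial_G(H_i)|\le k^2+k-1\le 2k^2-1$ (valid for every $k\ge 1$), and we are done.

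The main obstacle is the complementary case $|I|\le 1$. When $|I|=1$, with $A_{i_0}$ the unique component carrying interior vertices of $H$, the failure of $H_{i_0}$ forces $|A_{i_0}\cap\partial_G(H)|\ge 2k^2-|D|$; combined with $|\partial_G(H)|\le 2k^2-1$, this squeezes $|\bigcup_{j\ne i_0}A_j|+|D\cap\partial_G(H)|\le|D|-1$, so the residual components are tiny and $D$ must contain at least two vertices of $V(H)\setminus\partial_G(H)$. When $|I|=0$ (possible only for $k\ge 2$), the whole interior of $H$ lies in $D$ and $\sum_i|A_i|\le 2k^2-1$. In both subcases the naive truncation $H_i$ either inflates the boundary by all of $D$ or destroys the interior condition, so one must combine the inclusion-minimality of $D$ with the hypothesis $\delta(G)\ge 2k^2$ (applied to the interior vertices that $D$ is forced to contain) to extract a subtler reduction — e.g.\ by cutting $H$ along a different cutset, or by swapping certain $D$-vertices for carefully chosen vertices of $V(G)\setminus V(H)$ — that produces a strictly smaller member of $\mathcal{F}$ and completes the contradiction. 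This refined structural surgery is the technical heart of the proof.
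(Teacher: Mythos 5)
Your proposal sets up essentially the same induction as the paper (iteratively shrinking a region with small frontier and nonempty interior by cutting along a cutset of size at most $k$), and your easy case is correct: when at least two components of $H\smallsetminus D$ contain interior vertices, the frontier computation $\partial_G(H_i)=(A_i\cap\partial_G(H))\cup D$ and the pigeonhole bound $|A_i\cap\partial_G(H)|\le k^2-1$ do give a smaller member of $\mathcal{F}$. But the argument is not complete: the cases $|I|\le 1$ are exactly where the whole difficulty lies, and you leave them unresolved, gesturing at ``a subtler reduction'' or ``refined structural surgery'' without producing it. This is a genuine gap, not a routine detail. When the only component carrying interior vertices is $A_{i_0}$, every natural truncation either inflates the frontier by up to $|D|=k$ beyond $2k^2-1$ or has empty interior, and no bookkeeping based solely on the cardinality $|\partial_G(H)|$ makes the induction close; indeed, the extremal choice of $H$ by minimum $|V(H)|$ with the invariant $|\partial_G(H)|\le 2k^2-1$ simply does not survive this case as stated.

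The paper's resolution is to replace the cardinality of the frontier by the $k$-weight $w^k_B(C)$ (each frontier vertex contributes $\min\{k,\text{its number of neighbors outside}\}$, and at least $1$), and to run the descent with the invariant $w^k_B(C)\le 2k^2-1$ (Lemma~\ref{lemma-alg}). In your hard case the components $A_j\subseteq\partial_G(H)$ consist, by the degree hypothesis, entirely of vertices that are $k$-strong with respect to the outside, so they already carry maximal weight $k$ each; discarding them and admitting the cutset $D$ into the new frontier is then paid for by the weight they release, since the new weight of $D$ is at most $|D|\cdot|A_j|\le k|A_j|$. This trade keeps the weight (though not necessarily the cardinality) from increasing while strictly enlarging the outside, which is precisely what your missing step requires; the case where the remaining piece has empty interior is then excluded by a degree count against $\delta(G)>2k^2-1$. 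So your plan follows the paper's route in outline, but the key idea that makes the induction work -- the weight function and the strong/weak dichotomy on frontier vertices -- is absent, and without it (or an equivalent potential) the proof does not go through.
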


Theorem \ref{th:sf} can be thought of as a generalization of the
following theorem of Mader.

\begin{theorem}[Mader~\cite{mader:4k}] 
 \label{th:maderdelta} 
 Let $k$ be a positive integer, and let $G$ be a graph. If
 $\delta(G) \geq 4k$, then $G$ contains a $(k+1)$-connected induced
 subgraph.
\end{theorem}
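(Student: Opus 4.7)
The plan is a standard extremal argument; the hypothesis $\delta(G) \geq 4k$ enters only through the edge-count bound $|E(G)| \geq 2k|V(G)| \geq 2k(|V(G)|-k)$. I would consider the family $\mathcal{F}$ of induced subgraphs $H \subseteq G$ satisfying $|V(H)| \geq 2k+1$ and the edge-density condition $|E(H)| \geq 2k(|V(H)|-k)$, and pick $H \in \mathcal{F}$ of minimum order. The family is nonempty because $G$ itself belongs to it (note $|V(G)| \geq \delta(G)+1 \geq 4k+1$).

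First I would observe that $\delta(H) \geq 2k+1$. Otherwise, deleting a vertex of degree at most $2k$ loses at most $2k$ edges and preserves the edge-density bound. One then needs to check that the remaining graph still has at least $2k+1$ vertices, which reduces to ruling out the borderline case $|V(H)| = 2k+1$; there the density bound would demand $|E(H)| \geq 2k(k+1) = 2k^2+2k$, but $\binom{2k+1}{2} = 2k^2+k$, a contradiction.

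Next I would show $H$ itself is $(k+1)$-connected. Suppose instead $H$ has a cutset $C$ with $|C| \leq k$, and let $A_1,\dots,A_m$ (with $m \geq 2$) be the vertex sets of the components of $H - C$; set $H_i = H[A_i \cup C]$. Because $\delta(H) \geq 2k+1$ and every vertex of $A_i$ has all its neighbors inside $A_i \cup C$, each $|V(H_i)| \geq 2k+2$, and $|V(H_i)| < |V(H)|$ since $A_j \neq \emptyset$ for $j \neq i$. The goal is to locate some $H_i \in \mathcal{F}$, violating minimality of $H$. Using the identities $\sum_i |V(H_i)| = |V(H)|+(m-1)|C|$ and $\sum_i |E(H_i)| = |E(H)|+(m-1)|E(H[C])| \geq |E(H)|$, the assumption that every $H_i$ fails the density bound would give
\[
  |E(H)|\ \leq\ \sum_i |E(H_i)|\ <\ 2k\sum_i\bigl(|V(H_i)|-k\bigr)\ =\ 2k\bigl(|V(H)|+(m-1)(|C|-k)-k\bigr).
\]
The sole computation is $(m-1)(|C|-k) \leq 0$, immediate from $m \geq 2$ and $|C| \leq k$; it bounds the right-hand side by $2k(|V(H)|-k) \leq |E(H)|$, the required contradiction. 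This tiny inequality is the main (and essentially only) obstacle — it encodes exactly the cutset size constraint — and once it is in hand, the minimality of $H$ forces $H$ to be $(k+1)$-connected.
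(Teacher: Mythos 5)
Your proof is correct. Note, however, that the paper does not actually prove Theorem~\ref{th:maderdelta}: it quotes it as Mader's theorem and observes that it follows immediately from the average-degree version, Theorem~\ref{th:maderd}, which is also quoted without proof, so there is no in-paper argument to compare against. What you give is the classical extremal argument (essentially Mader's): take an induced subgraph $H$ of minimum order subject to $|V(H)|\geq 2k+1$ and $|E(H)|\geq 2k(|V(H)|-k)$; your exclusion of the borderline case via $2k(k+1)=2k^2+2k$ versus the maximum possible $(2k+1)2k/2=2k^2+k$ is right, as is the counting over the pieces $H_i=H[A_i\cup C]$ with $\sum_i|V(H_i)|=|V(H)|+(m-1)|C|$, $\sum_i|E(H_i)|\geq|E(H)|$, and the key inequality $(m-1)(|C|-k)\leq 0$; since $\delta(H)\geq 2k+1$ forces $|V(H)|\geq 2k+2\geq k+2$, the vertex-count requirement in the definition of $(k+1)$-connectivity is also met (worth stating explicitly). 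One observation: your hypothesis is used only through the edge count $|E(G)|\geq 2k|V(G)|$, so your argument in fact proves the stronger average-degree statement Theorem~\ref{th:maderd} (and hence Theorem~\ref{mader:c}); this is consistent with the paper's remark in the open-questions section that all known proofs of Theorems~\ref{th:maderdelta} and~\ref{th:maderd} rely on average degree, the only genuinely minimum-degree argument in the paper being the special case Theorem~\ref{th:24}.
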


On the one hand, Theorem~\ref{th:sf} gives a stronger result because of
the statement about $\partial(H)$, but on the other hand,
Theorem~\ref{th:maderdelta} is stronger, because the assumption on
$\delta$ is weaker. We remark that Theorem~\ref{th:maderdelta} is
usually given in terms of average (rather than minimum) degree, as
follows.

\begin{theorem}[Mader \cite{mader:4k}] 
 \label{th:maderd}
 Let $k$ be a positive integer, and let $G$ be a graph. If
 $d(G) \geq 4k$, then $G$ contains a $(k+1)$-connected induced
 subgraph.
\end{theorem}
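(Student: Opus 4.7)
The plan is to establish Theorem~\ref{th:maderd} by induction on $|V(G)|$, proving the following strengthened form: \emph{for every graph $G$ with $d(G)\geq 4k$, there is a $(k+1)$-connected induced subgraph $H$ of $G$ with $d(H)\geq d(G)-2k$}. Theorem~\ref{th:maderd} is then an immediate consequence. The strengthening is essential: the naive reduction — apply the folklore lemma that every graph contains an induced subgraph $H$ with $\delta(H)\geq d(G)/2$ (proved by iteratively removing vertices of degree below $d(G)/2$, each such removal destroying fewer than $d(G)/2$ edges) and then invoke Theorem~\ref{th:maderdelta} — would require the weaker hypothesis $d(G)\geq 8k$. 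Carrying the $-2k$ slack through the induction is precisely what recovers the sharp $4k$ bound.

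The inductive step splits on the minimum degree of $G$. First, suppose some vertex $v$ satisfies $\deg_G(v)\leq d(G)/2$. A direct computation using $2|E(G)|=|V(G)|\,d(G)$ gives
\[
 d(G-v)=\frac{2|E(G)|-2\deg_G(v)}{|V(G)|-1}\geq \frac{|V(G)|\,d(G)-d(G)}{|V(G)|-1}=d(G)\geq 4k,
\]
so the inductive hypothesis applied to $G-v$ produces a $(k+1)$-connected induced subgraph $H\subseteq G-v\subseteq G$ with $d(H)\geq d(G-v)-2k\geq d(G)-2k$, completing this case.

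In the remaining case, $\delta(G)>d(G)/2\geq 2k$. If $G$ itself is $(k+1)$-connected, take $H=G$. Otherwise $G$ has a cutset $S$ of size at most $k$, and $V(G)\smallsetminus S$ partitions as $A\cup B$ with $A,B\neq\emptyset$ and no edges between $A$ and $B$; one chooses $B$ to be a smallest component of $G\smallsetminus S$. The key observation is that $\delta(G)>2k\geq 2|S|$ forces every vertex of $A$ to have all of its neighbors inside $A\cup S$, so $G[A\cup S]$ preserves the full degree of every vertex of $A$. An edge count using
\[
 |E(G)|=|E(G[A\cup S])|+|E(G[B\cup S])|-|E(G[S])|,
\]
together with the minimality of $B$ (which bounds $|E(G[B\cup S])|$ from above), then shows that the proper induced subgraph $G[A\cup S]$ still satisfies the density hypothesis, so the inductive hypothesis applies to it and yields the required $H$.

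The main obstacle is the edge count in this second case. The vertices of $S$ may lose many neighbors when passing from $G$ to $G[A\cup S]$, and edges of $G[S]$ are double-counted in the decomposition above; a naive count bounds $d(G[A\cup S])$ only by $\tfrac{|A|}{|A|+|S|}\,d(G)$, which is insufficient. It is precisely the combination of the strict inequality $\delta(G)>2|S|$ from Case~2 and the $-2k$ slack carried by the strengthened induction that provides just enough room to absorb these losses and close the recursion at the sharp $4k$ threshold.
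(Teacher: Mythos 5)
Your induction breaks in Case~2, at exactly the step you flag as ``the main obstacle'': the claim that when $B$ is a smallest component of $G\smallsetminus S$, the graph $G[A\cup S]$ still satisfies the density hypothesis. Minimality of $B$ by \emph{vertex count} gives no upper bound on $|E(G[B\cup S])|$, because almost all the density of $G$ can live in the small component. Concretely, take $k=1$: let $A$ induce a connected $3$-regular graph on $70$ vertices, let $S=\{s\}$ be a single new vertex joined to two vertices of $A$, and let $B$ induce a clique on $10$ vertices completely joined to $s$. Then $d(G)=324/81=4=4k$, $\delta(G)=3>d(G)/2$, and $s$ is a cutvertex, so Case~2 applies with smallest component $B$; but $d(G[A\cup S])=214/71<4$, so the inductive hypothesis cannot be invoked on $G[A\cup S]$. (The theorem is of course true for this $G$ -- the $2$-connected subgraph sits in $G[B\cup S]$ -- which shows that ``recurse on the side opposite the smallest component'' is simply the wrong rule.) A second, independent problem: even when $d(G[A\cup S])\geq 4k$ does hold, your strengthened conclusion $d(H)\geq d(G)-2k$ requires $d(G[A\cup S])\geq d(G)$, and this fails already for two large cliques $K_m$ glued along $k$ vertices, where each side has average degree $m-1<d(G)$. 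So the additive $-2k$ slack in the \emph{average degree} is not preserved when passing to one side of a separation, and the recursion cannot close.

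The paper does not prove this theorem (it cites Mader), but the standard argument shows what kind of slack does propagate: set $\gamma=|E(G)|/|V(G)|\geq 2k$ and consider subgraphs $H$ with $|V(H)|\geq 2k$ and $|E(H)|>\gamma\bigl(|V(H)|-k\bigr)$; $G$ itself qualifies, and a vertex-minimal such $H$ has $\delta(H)>\gamma$ (else delete a low-degree vertex) and more than $2k$ vertices. If $H$ had a separation $H=H_1\cup H_2$ with $|V(H_1)\cap V(H_2)|\leq k$, then each $H_i$ has at least $2k$ vertices (every vertex of $H_i\smallsetminus H_{3-i}$ keeps degree $>\gamma\geq 2k$ inside $H_i$), so by minimality $|E(H_i)|\leq\gamma(|V(H_i)|-k)$, and summing gives $|E(H)|\leq\gamma(|V(H)|-k)$, a contradiction; hence $H$ is $(k+1)$-connected, and $|E(H)|>\gamma(|V(H)|-k)$ with $|V(H)|>\gamma$ even yields your strengthened conclusion $d(H)>d(G)-2k$. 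The point is that the slack there is an edge-count deficit of $\gamma k$ edges, which is exactly what a cutset of size at most $k$ can absorb when the two sides are recombined; an additive $2k$ in the average degree has no such compatibility with separations, which is why your Case~2 cannot be repaired by choosing the component more cleverly within your framework.
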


Obviously, Theorem~\ref{th:maderdelta} is a corollary of
Theorem~\ref{th:maderd}, because for any graph $G$, $d(G) \geq
\delta(G)$. One may wonder whether it might be possible to strengthen
Theorem~\ref{th:sf} by replacing $\delta$ with $d$. Let us see that
this is not possible. Consider an integer $d\geq 3$ and a
$(2d-2)$-regular graph $G_0$ of girth $d$ (such a graph exists and 
is called a \emph{$(2d-2, d)$-cage}, see for instance~\cite{ExJa:cage}). 
Now, let $G$ be the graph obtained by adding a pendant vertex at each 
vertex of $G_0$. The average degree of $G$ and the girth of $G$ are 
both equal to $d$. A $2$-connected induced subgraph $H$ of $G$ must contain 
a cycle, so because of the girth of $G$, it must contain at least $d$ 
vertices. Moreover, $H$ cannot contain any of the pendant vertices, and 
so $|\partial(H)| \geq d$. Thus, $G$ does not contain a $2$-connected 
induced subgraph with a ``small'' frontier. We state this formally below.

\begin{theorem} 
 \label{th:d} 
 For every integer $d$, there exits a graph $G$ such that
 $d(G) \geq d$ and such that every $2$-connected induced subgraph $H$
 of $G$ satisfies $|\partial(H)| \geq d$.
\end{theorem}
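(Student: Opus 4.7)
The plan is to formalize the construction sketched in the paragraph immediately preceding the theorem. Since a graph that works for some value $d$ also witnesses the statement for every smaller value, I may assume $d \geq 3$. I would let $G_0$ be a $(2d-2)$-regular graph of girth $d$; such a graph exists as a $(2d-2,d)$-cage (see~\cite{ExJa:cage}). Form $G$ from $G_0$ by attaching, to each vertex of $G_0$, one new vertex of degree one; call these new vertices \emph{pendants}.

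A direct degree count handles the average degree. If $G_0$ has $n$ vertices, then $G$ has $2n$ vertices, the $n$ original vertices have degree $2d-1$ in $G$, and the $n$ pendants have degree $1$, so the degree sum is $n(2d-1)+n=2nd$ and $d(G)=d$. Since no pendant lies on a cycle, every cycle of $G$ is already a cycle of $G_0$, so the girth of $G$ equals the girth of $G_0$, which is $d$.

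The main step is the analysis of an arbitrary $2$-connected induced subgraph $H$ of $G$. Being $2$-connected, $H$ has at least three vertices and no cutvertex. If a pendant $u$ were in $V(H)$, its unique neighbor $v$ in $G$ would either lie outside $V(H)$ (making $u$ isolated in $H$) or lie inside $V(H)$ (making $\{v\}$ a cutvertex of $H$, since $|V(H)|\geq 3$); both possibilities contradict $2$-connectedness. Hence $V(H)\subseteq V(G_0)$. Since every $2$-connected graph contains a cycle and any cycle of $H$ lies in $G_0$, the girth bound forces $|V(H)|\geq d$. Finally, every vertex $v\in V(H)\subseteq V(G_0)$ still has its pendant neighbor in $V(G)\setminus V(H)$, so $v\in\partial(H)$; therefore $|\partial(H)|=|V(H)|\geq d$.

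There is essentially no obstacle beyond packaging the cited construction correctly; the only external input is the existence of cages of prescribed regularity and girth, which is standard. The one place to be careful is the argument that $H$ contains no pendant, since the weaker statement ``no vertex of degree $1$ in $H$'' has to be derived from the definition of $2$-connectedness rather than assumed.
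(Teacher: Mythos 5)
Your proposal is correct and follows essentially the same route as the paper: attach a pendant to each vertex of a $(2d-2,d)$-cage, compute $d(G)=d$, and argue that a $2$-connected induced subgraph avoids all pendants, must contain a cycle (hence at least $d$ vertices by the girth), and has all its vertices in the frontier. You merely spell out the details (the degree count and the exclusion of pendants via cutvertex/isolated-vertex cases) that the paper leaves as a sketch.
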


\subsection*{Cut-partitions}

We now turn to a restatement of Theorem~\ref{th:sf} that is convenient
for the rest of the paper. We begin with some definitions. Given a
graph $G$, a set $S \subseteq V(G)$, and a vertex $v \in V(G)
\smallsetminus S$, we say that $v$ is {\em complete} (respectively:
{\em anti-complete}) to $S$ in $G$ provided that $v$ is adjacent
(respectively: non-adjacent) to every vertex of $S$. Given disjoint
subsets $X$ and $Y$ of $V(G)$, we say that $X$ is {\em complete}
(respectively: {\em anti-complete}) to $Y$ in $G$ provided that every
vertex of $X$ is complete (respectively: anti-complete) to $Y$ in
$G$. A \emph{cut-partition} of a graph $G$ is a partition $(A,B,C)$ of
$V(G)$ such that $A$ and $B$ are non-empty ($C$ may possibly be
empty), and $A$ is anti-complete to $B$ in $G$. Clearly, if $(A,B,C)$
is a cut-partition of $G$, then $C$ is a cutset of $G$. Conversely,
every cutset of $G$ gives rise to at least one cut-partition of
$G$. Furthermore, note that if $(A,B,C)$ is a cut-partition of $G$, 
then $N_G(A) \subseteq C$, $N_G[A] \subseteq A \cup C$, and 
$\partial_G(A \cup C) \subseteq C$. 

Theorem \ref{th:sf} can be restated in terms of cut-partitions,
as follows.

\begin{theorem} 
 \label{th:conn} 
 Let $k$ be a positive integer, and let $G$ be a graph. Then at least
 one of the following holds:
\begin{itemize} 
\item $G$ is $(k+1)$-connected; 
\item $G$ admits a cut-partition $(A,B,C)$ such that $G[A \cup C]$ is
 $(k+1)$-connected and $|C| \leq 2k^2-1$;
\item $G$ contains a vertex of degree at most $2k^2-1$.
\end{itemize} 
\end{theorem}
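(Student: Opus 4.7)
The plan is to derive Theorem~\ref{th:conn} as an immediate reformulation of Theorem~\ref{th:sf}. Concretely, I would dispatch the three cases by reading the contrapositive of the minimum-degree hypothesis in Theorem~\ref{th:sf}.

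First, if $G$ contains a vertex of degree at most $2k^2-1$, the third bullet holds and we are done. Otherwise, $\delta(G) > 2k^2-1$, and applying Theorem~\ref{th:sf} yields a $(k+1)$-connected induced subgraph $H$ of $G$ with $\partial(H) \subsetneq V(H)$ and $|\partial(H)| \leq 2k^2-1$.

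Now I split on whether $H$ exhausts $G$. If $V(H)=V(G)$, then $G=H$ is itself $(k+1)$-connected and the first bullet holds. Otherwise, $B := V(G)\setminus V(H)$ is nonempty; set $C := \partial(H)$ and $A := V(H)\setminus\partial(H)$. Then $A\neq\emptyset$ because $\partial(H)\subsetneq V(H)$, and by the definition of the frontier no vertex of $A$ has a neighbor in $V(G)\setminus V(H)=B$, so $A$ is anti-complete to $B$ in $G$. Hence $(A,B,C)$ is a cut-partition of $G$, $G[A\cup C]=H$ is $(k+1)$-connected, and $|C|=|\partial(H)|\leq 2k^2-1$, which is exactly the second bullet.

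The only substantive step is Theorem~\ref{th:sf} itself, which is proved in a later section; the reformulation here is just a matter of repackaging the conclusion using the $A,B,C$ cut-partition notation introduced just before Theorem~\ref{th:conn}. I do not expect a genuine obstacle in this direction beyond making this translation carefully and checking that the sides $A$ and $B$ are non-empty.
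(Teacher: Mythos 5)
Your translation between the two statements is correct as far as it goes: taking $C=\partial(H)$, $A=V(H)\smallsetminus\partial(H)$, $B=V(G)\smallsetminus V(H)$ and checking non-emptiness and anti-completeness is exactly the direction ``Theorem~\ref{th:sf} implies Theorem~\ref{th:conn}'' of the equivalence that the paper itself spells out immediately after stating Theorem~\ref{th:conn}. But as a proof of Theorem~\ref{th:conn} it is circular relative to this paper: Theorem~\ref{th:sf} is \emph{not} proved independently in a later section. The paper explicitly proves Theorem~\ref{th:conn} first (``We prove Theorem~\ref{th:conn} (and therefore Theorem~\ref{th:sf}) in Section~\ref{sec:thPart}''), and Theorem~\ref{th:sf} is then obtained from Theorem~\ref{th:conn} by running your translation in the opposite direction. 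So the statement you call ``the only substantive step'' has, in this paper, no proof that does not pass through the very theorem you are trying to establish.

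The actual content lives in Section~\ref{sec:thPart} and is entirely absent from your proposal: one introduces the $k$-weight $w^k_B(C)$ (capping each vertex's contribution at $k$, distinguishing $k$-weak from $k$-strong vertices), and Lemma~\ref{lemma-alg} shows that any cut-partition $(A,B,C)$ with $w_B(C)\leq 2k^2-1$ in a graph of minimum degree greater than $2k^2-1$ either has $G[A\cup C]$ already $(k+1)$-connected or can be replaced by a strictly \emph{better} cut-partition (smaller $A\cup C$) with the same weight bound; iterating (Theorem~\ref{theorem-alg}, Corollary~\ref{cor-main}) yields Theorem~\ref{th:conn}, with the bound $|C|\leq w_B(C)\leq 2k^2-1$. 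If you want a self-contained proof, you would need to supply this (or some equivalent) argument; merely invoking Theorem~\ref{th:sf} does not discharge it.
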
 

Let us check that Theorems~\ref{th:sf} and~\ref{th:conn} are indeed
equivalent. Fix a graph $G$; we may assume that $\delta(G) > 2k^2-1$,
for otherwise $G$ satisfies both statements. Suppose first that $G$
satisfies Theorem~\ref{th:sf}, and let $H$ be as in that theorem. If
$H = G$, then $G$ satisfies the first outcome of Theorem~\ref{th:conn},
and otherwise, we set $A = V(H) \smallsetminus \partial(H)$, $B = V(G)
\smallsetminus V(H)$, and $C = \partial(H)$, and we observe that
$(A,B,C)$ satisfies the second outcome of
Theorem~\ref{th:conn}. Conversely, suppose that $G$ satisfies
Theorem~\ref{th:conn}. Then since $\delta(G) > 2k^2-1$, $G$ satisfies
one of the first two outcomes of Theorem~\ref{th:conn}. If $G$
satisfies the first outcome of Theorem~\ref{th:conn}, then we set $H =
G$, and otherwise, we let $(A,B,C)$ be as in the second outcome of
Theorem~\ref{th:conn}, and we set $H = G[A \cup C]$. This shows that
Theorems~\ref{th:sf} and \ref{th:conn} are indeed equivalent. We prove
Theorem~\ref{th:conn} (and therefore Theorem~\ref{th:sf}) in
Section~\ref{sec:thPart}, where we also provide a polynomial time
algorithm that actually finds the objects whose existence is guaranteed 
by Theorem~\ref{th:conn}.

\subsection*{Extreme decomposition}

We now turn to another motivation for this research. A class of
graphs is \emph{hereditary} if it is closed under taking induced
subgraphs and isomorphism. Many interesting theorems have the
following general form: every graph of a given heredatary class can be
obtained from certain ``basic'' graphs by repeatedly applying certain
operations. Here we are interested in the following operation. We say
that $G$ is obtained from $G_1$ and $G_2$ by \emph{gluing along at
 most $k$ vertices} if $G_1[V(G_1) \cap V(G_2)] = G_2[V(G_1) \cap
V(G_2)]$, $|V(G_1) \cap V(G_2)| \leq k$, and $G = G_1 \cup G_2$. (Note
that $G_1$ and $G_2$ are both induced subgraphs of $G$. Furthermore,
if $V(G_1) \cap V(G_2) = \emptyset$, then $G$ is simply the disjoint
union of $G_1$ and $G_2$.)

If $k$ is a positive integer and $\cal G$ a class of graphs, we denote
by ${\cal G}^k$ the \emph{$k$-closure} of $\cal G$, that is, the
inclusion-wise smallest class that includes $\cal G$ and is
closed under the operation of gluing along at most $k$ vertices. We
sometimes refer to $\cal G$ as the \emph{basic class} and to its
members as \emph{basic graphs}. 

Note that if $\mathcal{G}$ is a hereditary class, then $\mathcal{G}^k$
is hereditary and closed under disjoint unions. Furthermore, it is
easy to see that if $\mathcal{G}$ is hereditary, then $\mathcal{G}^k$
is the inclusion-wise maximal hereditary class for which the statement
{\it every graph in the class is either in $\cal G$ or admits a cutset
 of size at most $k$} is true.

We frequently use the following simple lemma. 

\begin{lemma}
 \label{l:inG}
 If $k$ is a positive integer and $\cal G$ is a hereditary class of
 graphs, then every graph that belongs to $\mathcal{G}^k$ and does not
 admit a cutset of size at most $k$ belongs to $\mathcal{G}$. Thus, 
 every $(k+1)$-connected graph from ${\cal G}^k$ and
 every complete graph from ${\cal G}^k$ is in $\cal G$.
\end{lemma}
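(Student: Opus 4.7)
The plan is to prove the equivalent contrapositive formulation: for every $G \in \mathcal{G}^k$, either $G \in \mathcal{G}$ or $G$ admits a cutset of size at most $k$. Once this is established, the second sentence of the lemma is immediate, because a $(k+1)$-connected graph has no cutset of size at most $k$ by definition, and a complete graph has no cutset at all (removing any vertex subset from $K_n$ leaves another clique, which is never disconnected), so both families must lie in $\mathcal{G}$.

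I will prove the contrapositive by induction on the number of gluing operations needed to derive $G$ from the basic class $\mathcal{G}$. In the base case, $G \in \mathcal{G}$ and there is nothing to show. In the inductive step, $G = G_1 \cup G_2$ where $G_1, G_2 \in \mathcal{G}^k$ have strictly shorter derivations, the restrictions of $G_1$ and $G_2$ to $S := V(G_1) \cap V(G_2)$ coincide, and $|S| \leq k$.

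Now I distinguish two subcases. If both $V(G_1) \sm S$ and $V(G_2) \sm S$ are non-empty, then these two sets are disjoint and anti-complete in $G$, since any edge between them would have to belong to $G_1$ or to $G_2$ and belongs to neither; hence $S$ is a cutset of $G$ of size at most $k$, and we are done. Otherwise, we may assume $V(G_1) \subseteq V(G_2)$; since $G_1$ and $G_2$ agree on $S = V(G_1)$, the graph $G_1$ is an induced subgraph of $G_2$, so $G = G_1 \cup G_2 = G_2$, and applying the induction hypothesis to $G_2$ (whose derivation is strictly shorter) yields the conclusion directly for $G$.

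The only subtlety, and the reason I induct on derivation length rather than on $|V(G)|$, is that ``trivial'' gluings where one factor is contained in the other leave $G$ unchanged and hence offer no decrease in vertex count; they do, however, correspond to strictly shorter sub-derivations, which is enough to make the induction go through smoothly. Beyond this bookkeeping issue I do not expect any real obstacle.
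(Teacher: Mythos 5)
Your proof is correct and follows the same route as the paper, which simply declares the first statement immediate from the definition of $\mathcal{G}^k$ (every graph in $\mathcal{G}^k$ is in $\mathcal{G}$ or has a cutset of size at most $k$) and then notes that $(k+1)$-connected and complete graphs admit no such cutset. Your induction on derivation length, including the handling of trivial gluings where one factor contains the other, just makes that ``immediate'' step explicit and is perfectly sound.
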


\begin{proof}
 The first statement is immediate from the definition of $\mathcal{G}^k$: 
 every graph in ${\cal G}^k$ is in $\cal G$ or has a cutset of size at most $k$. 
 Since $(k+1)$-connected graphs and complete graphs do not admit a cutset of size 
 at most $k$, the second statement follows from the first. 
\end{proof}

The following theorem is a direct consequence of Theorem~\ref{th:conn}
and Lemma~\ref{l:inG}.

\begin{theorem} 
 \label{th:class} 
 Let $k$ be a positive integer, let $\cal G$ be a hereditary class of
 graphs, and let $G$ be a graph in ${\cal G}^k$. Then at least one of
 the following holds:
\begin{itemize} 
\item $G \in \mathcal{G}$; 
\item $G$ admits a cut-partition $(A,B,C)$ such that $G[A \cup C] \in \mathcal{G}$ and $|C| \leq 2k^2-1$; 
\item $G$ has a vertex of degree at most $2k^2-1$. 
\end{itemize} 
\end{theorem}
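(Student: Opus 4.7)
The plan is to apply Theorem~\ref{th:conn} to $G$ directly and then translate each of its three outcomes into the corresponding outcome of Theorem~\ref{th:class}, invoking Lemma~\ref{l:inG} in the first two cases. The third outcome of Theorem~\ref{th:conn} is already identical to the third outcome of Theorem~\ref{th:class}, so no work is needed there.

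For the first outcome, if $G$ is $(k+1)$-connected, then $G$ admits no cutset of size at most $k$, and since $G \in \mathcal{G}^k$ by hypothesis, Lemma~\ref{l:inG} immediately gives $G \in \mathcal{G}$, which is the first outcome of Theorem~\ref{th:class}.

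For the second outcome, if $G$ admits a cut-partition $(A,B,C)$ with $G[A \cup C]$ $(k+1)$-connected and $|C| \leq 2k^2-1$, then I would first note that $\mathcal{G}^k$ is hereditary (this fact is stated in the paragraph immediately preceding Lemma~\ref{l:inG}), so $G[A \cup C] \in \mathcal{G}^k$. Since $G[A \cup C]$ is $(k+1)$-connected, Lemma~\ref{l:inG} again yields $G[A \cup C] \in \mathcal{G}$, matching exactly the second outcome of Theorem~\ref{th:class}.

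There is no real obstacle here: the content has been absorbed into Theorem~\ref{th:conn} and Lemma~\ref{l:inG}, and the proof consists only of pattern-matching the three cases. The one small thing to be careful about is explicitly citing the heredity of $\mathcal{G}^k$ when passing from $G \in \mathcal{G}^k$ to $G[A \cup C] \in \mathcal{G}^k$ in the second case, since the induced subgraph $G[A \cup C]$ inherits membership in $\mathcal{G}^k$ only by virtue of that closure property.
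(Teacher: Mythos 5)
Your proof is correct and is exactly the derivation the paper intends: the paper states Theorem~\ref{th:class} is ``a direct consequence of Theorem~\ref{th:conn} and Lemma~\ref{l:inG},'' and your case analysis (including the explicit use of heredity of $\mathcal{G}^k$ to get $G[A\cup C]\in\mathcal{G}^k$ before applying Lemma~\ref{l:inG}) fills in that derivation faithfully.
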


One might wonder whether the third outcome from Theorem~\ref{th:class} is 
truly necessary. For $k=1$, it is easy to see that the third outcome is in fact 
unnecessary: for every hereditary class $\mathcal{G}$, and every graph $G \in 
\mathcal{G}^1$, we have that either $G \in \mathcal{G}$, or $G$ admits a 
cut-partition $(A,B,C)$ such that $G[A \cup C] \in \mathcal{G}$ and $|C| \leq 1$. 
However, for $k \geq 2$, the third outcome is indeed necessary: the statement 
obtained from Theorem~\ref{th:class} by removing the third outcome is false, even 
if we increase the bound of $2k^2-1$ in the second outcome. To see this, suppose 
that $k \geq 2$ and that $\mathcal{G}$ is the class of all complete graphs. Then 
the graph $G$ represented in Figure~\ref{fig:twodiamonds} belongs to 
$\mathcal{G}^k$, does not belong to $\mathcal{G}$, and (since it does not admit 
a clique cutset) does not admit a cut-partition $(A,B,C)$ such that 
$G[A \cup C] \in \mathcal{G}$. 

\begin{figure}
\begin{center}
\includegraphics{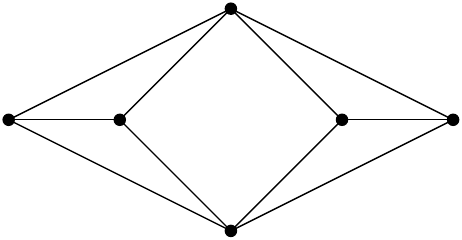}
\end{center} 
\caption{A graph that belongs to the $2$-closure of the class of all complete graphs, is not complete, and does not admit a cut-partition $(A,B,C)$ such that $A \cup C$ is a clique} \label{fig:twodiamonds} 
\end{figure}

\medskip 

Theorem~\ref{th:class} is an example of what is called an 
\emph{extreme decomposition} result. Such a result states that if 
a graph is decomposable by a certain kind of decomposition, then 
there is a decomposition (sometimes not exactly of the same kind; in 
Theorem~\ref{th:class}, for instance, we have to increase the size 
of the cutset from $k$ to $2k^2-1$) such that one of the blocks 
is basic (in Theorem~\ref{th:class}, the block $G[A\cup C]$). This 
can be very useful in various contexts: for proofs by induction, 
a basic block can be easier to handle, and for algorithms, 
recursing in a basic block can be faster than in a graph that 
needs to be decomposed further. Here is an immediate corollary of
Theorem~\ref{th:class}.

\begin{theorem} 
 \label{th:phi} Let $k$ be a positive integer, let $\cal G$ be a
 hereditary class of graphs, and let $G$ be a graph in ${\cal
 G}^k$. Then either $G \in \cal G$, or $G$ admits a cut-partition
 $(A, B, C)$ such $G[A] \in \cal G$ and $|C| \leq 2k^2-1$.
\end{theorem}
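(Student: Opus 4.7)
The plan is to apply Theorem~\ref{th:class} to $G$ and then dispose of its three outcomes. The first outcome is already the first outcome of Theorem~\ref{th:phi}. In the second outcome, one obtains a cut-partition $(A, B, C)$ with $G[A \cup C] \in \mathcal{G}$ and $|C| \leq 2k^2 - 1$; since $\mathcal{G}$ is hereditary we also have $G[A] \in \mathcal{G}$, so the very same cut-partition witnesses the second outcome of Theorem~\ref{th:phi}.

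All the actual work is in the third outcome, which supplies a vertex $v$ of degree at most $2k^2 - 1$. My plan is to take the cut-partition $(\{v\},\, V(G) \smallsetminus N_G[v],\, N_G(v))$. Its cutset $N_G(v)$ has at most $2k^2 - 1$ vertices; the part $\{v\}$ is anti-complete to $V(G) \smallsetminus N_G[v]$ by definition; and $G[\{v\}]$ is the one-vertex graph, which lies in $\mathcal{G}$ by Lemma~\ref{l:inG} (it is in $\mathcal{G}^k$ by heredity of $\mathcal{G}^k$, and has no cutset at all). This works as soon as $V(G) \smallsetminus N_G[v]$ is non-empty.

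The main obstacle is thus the corner case $N_G[v] = V(G)$, which forces $|V(G)| \leq 2k^2$. I would dispose of it as follows. If $G$ is a complete graph, then it admits no cutset, so Lemma~\ref{l:inG} directly yields $G \in \mathcal{G}$ and we are back in the first outcome. Otherwise, $G$ contains two non-adjacent vertices $u, w$, and the vertex $u$ plays the role of $v$: it is not universal (as $w \in V(G) \smallsetminus N_G[u]$), and its degree is at most $|V(G)| - 2 \leq 2k^2 - 1$, so the construction of the previous paragraph applied to $u$ in place of $v$ produces the desired cut-partition.
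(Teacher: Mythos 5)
Your proof is correct and follows essentially the same route as the paper: apply Theorem~\ref{th:class}, absorb the first two outcomes using heredity of $\mathcal{G}$, and in the third outcome build the cut-partition $(\{v\}, V(G)\smallsetminus N[v], N(v))$ around a low-degree vertex, falling back on Lemma~\ref{l:inG} when $G$ is complete. The only cosmetic difference is in the universal-vertex corner case: the paper avoids it by choosing $v$ of minimum degree (so a universal $v$ forces $G$ complete), whereas you switch to another non-universal vertex of small degree, which works equally well.
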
 

\begin{proof} 
 If $G$ satisfies the first outcome of Theorem \ref{th:class}, then
 we are done. If it satisfies the second outcome of
 Theorem~\ref{th:class}, the result follows from the fact that
 $\mathcal{G}^k$ is hereditary. Finally, suppose that $G$ satisfies
 the third outcome of the theorem. Fix $v \in V(G)$ such that ${\rm
 deg}(v) = \delta(G) \leq 2k^2-1$. If $v$ has a non-neighbor in
 $G$, then we set $A = \{v\}$, $B = V(G) \smallsetminus N[v]$, 
 and $C = N(v)$, and we are done. So suppose that $v$ does
 not have any non-neighbors in $G$. Then the fact that ${\rm deg}(v)
 = \delta(G)$ implies that $G$ is a complete graph, and so by
 Lemma~\ref{l:inG}, $G \in \mathcal{G}$. This completes the argument.
\end{proof}

In Theorem~\ref{th:phi}, we no longer insist that $G[A\cup C]$ be
basic, but merely that $G[A]$ be basic (so the outcome about the
degree is no longer needed, since a one-vertex graph is basic). In some
applications, it suffices to have $G[A]$ basic.

\medskip{}

Let us now observe that Mader's theorems (Theorems~\ref{th:maderdelta}
and~\ref{th:maderd}) can be stated in terms of basic classes and
$k$-closure. To see this, note that for any graph $G$, the statement ``$G$
has no $(k+1)$-connected induced subgraph'' is equivalent to ``$G$ is
in ${\cal G}^k$,'' where the basic class $\cal G$ is the class of all graphs
on at most $k+1$ vertices. Therefore, the following is simply the
contrapositive statement of Theorem~\ref{th:maderd}. 

\begin{theorem}[Mader~\cite{mader:4k}]
 \label{mader:c}
 Let $k$ be a positive integer. If $\cal G$ is the class of all
 graphs on at most $k+1$ vertices, then every graph $G$ in ${\cal
 G}^k$ has average degree smaller than $4k$, and consequently
 contains a vertex of degree at most $4k-1$.
\end{theorem}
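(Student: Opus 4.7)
The plan is to reduce the statement to Theorem~\ref{th:maderd} by showing that, for every graph $G$, the condition $G \in \mathcal{G}^k$ is equivalent to the condition that $G$ has no $(k+1)$-connected induced subgraph; the conclusion then drops out of the contrapositive of Theorem~\ref{th:maderd}, together with the observation that the minimum degree of a graph is at most its average degree.

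For one direction of the equivalence, I would assume $G \in \mathcal{G}^k$ and that $H$ is a $(k+1)$-connected induced subgraph of $G$. Since $\mathcal{G}$ is obviously hereditary (an induced subgraph of a graph on at most $k+1$ vertices has at most $k+1$ vertices), $\mathcal{G}^k$ is hereditary as well, and so $H \in \mathcal{G}^k$. Applying Lemma~\ref{l:inG} to $H$ then forces $H \in \mathcal{G}$, i.e.\ $|V(H)| \leq k+1$, which contradicts the fact that a $(k+1)$-connected graph contains at least $k+2$ vertices. For the converse, I would proceed by induction on $|V(G)|$. The base case $|V(G)| \leq k+1$ is immediate, since then $G \in \mathcal{G} \subseteq \mathcal{G}^k$. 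Otherwise $|V(G)| \geq k+2$, and since $G$ itself is assumed not to be $(k+1)$-connected, $G$ must admit a cutset $C$ of size at most $k$. Partitioning $V(G) \smallsetminus C$ into non-empty sets $A$ and $B$ with no edges between them, I would set $G_1 = G[A \cup C]$ and $G_2 = G[B \cup C]$; both are strictly smaller induced subgraphs of $G$ and therefore inherit the absence of a $(k+1)$-connected induced subgraph, so by the inductive hypothesis $G_1, G_2 \in \mathcal{G}^k$. Since $G$ is obtained from $G_1$ and $G_2$ by gluing along the at most $k$ vertices of $C$, this gives $G \in \mathcal{G}^k$, completing the equivalence.

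With the equivalence in place, fix $G \in \mathcal{G}^k$. Then $G$ has no $(k+1)$-connected induced subgraph, so by the contrapositive of Theorem~\ref{th:maderd} we conclude $d(G) < 4k$. Because the minimum degree is bounded above by the average degree and degrees are non-negative integers, at least one vertex of $G$ has degree at most $4k-1$. The whole argument is essentially a bookkeeping translation between two ways of forbidding highly connected induced subgraphs, and the only point requiring any care is ensuring that the cutset produced in the inductive step of the converse direction genuinely encodes a gluing of the form used in the definition of $\mathcal{G}^k$.
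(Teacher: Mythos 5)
Your proposal is correct and follows the same route the paper takes: the paper also observes that membership in $\mathcal{G}^k$ (for $\mathcal{G}$ the graphs on at most $k+1$ vertices) is equivalent to having no $(k+1)$-connected induced subgraph, and then reads Theorem~\ref{mader:c} off as the contrapositive of Theorem~\ref{th:maderd}. You merely spell out in full the equivalence (via Lemma~\ref{l:inG} in one direction and induction on $|V(G)|$ in the other) that the paper states as an observation.
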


Let $k$, $\mathcal{G}$, and $G$ be as in Theorem~\ref{mader:c}; the theorem 
then guarantees that $\delta(G) \leq 4k-1$. Let $a$ be a vertex of degree 
$\delta(G)$. If $a$ has no non-neighbors, then the fact that ${\rm deg}(a) = 
\delta(G)$ implies that $G$ is a complete graph, and so by Lemma~\ref{l:inG}, 
$G \in \mathcal{G}$. Otherwise, we set $A = \{a\}$, $B = V(G) \smallsetminus 
N[a]$, and $C = N(a)$, and we observe that $(A,B,C)$ is a cut-partition of 
$G$ such that $G[A] \in \mathcal{G}$ and $|C| \leq 4k-1$. This shows that 
Theorem~\ref{th:maderdelta} is, in a sense, a special case of 
Theorem~\ref{th:phi} (however, Theorem~\ref{th:phi} yields a weaker upper 
bound than Mader's Theorem~\ref{th:maderdelta} does; this is because in 
Theorem~\ref{th:phi}, the basic class $\mathcal{G}$ is an arbitrary hereditary 
class, whereas in Theorem~\ref{mader:c}, this is not the case).

\medskip{}

It would be natural to have a theorem of the following form: there
exists a function $f$ such that for every positive integer $k$ and
every graph $G$ such that $\delta(G) \geq f(k)$, there exists a
$(k+1)$-connected induced subgraph $H$ of $G$ such that $|N(H)| \leq
f(k)$. This statement would be very similar to Theorem~\ref{th:sf}
($\partial(H)$ is replaced by $N(H)$), and it would be the ``connectivity 
version'' of Theorem~\ref{th:phi}, just as Theorem~\ref{th:sf} is the 
``connectivity version'' of Theorem~\ref{th:class}. Unfortunately, the 
statement is false even for $k = 1$. 

We fix a positive integer $k$ and build a counterexample inductively. 
We define $G_0$ to be the graph on one vertex, and for every integer 
$c > 0$, we build $G_c$ by taking two disjoint copies $G'$ and $G''$
of $G_{c-1}$, and adding a new vertex $v$ complete to them. The
key property of $G_c$ is that any induced subgraph $H$ of $G_c$ with 
no cutset of size at most $k$ satisfies $|N(H)| \geq c$. (In particular, 
any $(k+1)$-connected induced subgraph $H$ of $G_c$ satisfies $|N(H)| 
\geq c$, and furthermore, $\delta(G_c) \geq c$.) For $c=0$, this is 
obvious. For $c > 0$, note first that $V(H)$ cannot intersect both $V(G')$ 
and $V(G'')$, because $H$ would then have a cutset of size at most one 
(indeed, if $v \in V(H)$, then $v$ would be a cutvertex of $H$, and 
otherwise, $H$ would be disconnected), a contradiction to the assumption 
that $H$ has no cutset of size at most $k$. So, up to symmetry, $V(H) 
\subseteq V(G') \cup \{v\}$. If $v \in V(H)$, then $N_{G_c}(H)$ contains 
all vertices of $G''$, and so $|N_{G_c}(H)| \geq |V(G_{c-1})| \geq 
\delta(G_{c-1})+1 \geq c$. Otherwise, $v \notin V(H)$, and so 
$N_{G_c}(H)$ contains all vertices of $N_{G'}(H)$ (by the induction 
hypothesis, there are at least $c-1$ of them), plus $v$. In either case, 
$N_{G_c}(H)$ contains at least $c$ vertices. What we just proved is 
stated formally below. 

\begin{theorem} 
 \label{no-phi-c} 
 Let $k$ be a positive integer. Then for every integer
 $c$, there exists a graph $G$ such that every 
 induced subgraph $H$ of $G$ that has no cutset of size at most $k$
 satisfies $|N(H)| \geq c$.
\end{theorem}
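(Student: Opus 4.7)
The plan is to construct, for each $c$, a graph $G = G_c$ witnessing the statement, and verify its property by induction on $c$. I set $G_0$ to be the one-vertex graph and, for $c > 0$, define $G_c$ by taking two disjoint copies $G'$ and $G''$ of $G_{c-1}$, plus one new vertex $v$ made complete to $V(G') \cup V(G'')$. I will prove by induction on $c$ the following strengthened claim: every induced subgraph $H$ of $G_c$ that admits no cutset of size at most $k$ satisfies $|N_{G_c}(H)| \geq c$. This also automatically yields $\delta(G_c) \geq c$, since a single-vertex subgraph has no cutset at all, so the claim applies to each vertex individually.

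The base case $c = 0$ is vacuous. For $c > 0$, the first step is a separation argument: I claim $V(H)$ cannot meet both $V(G')$ and $V(G'')$. Indeed, since $G'$ is anti-complete to $G''$ in $G_c$, the only possible ``bridge'' between these sides is $v$; so if $V(H)$ intersects both sides, then $\{v\} \cap V(H)$ is a cutset of $H$ of size at most $1 \leq k$ (either $v \in V(H)$ and removing it disconnects $H$, or $v \notin V(H)$ and $H$ is already disconnected), contradicting the hypothesis on $H$. By symmetry, I may then assume $V(H) \subseteq V(G') \cup \{v\}$.

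Now I split into two subcases. If $v \in V(H)$, then every vertex of $V(G'')$ is adjacent to $v$ and disjoint from $V(H)$, so $N_{G_c}(H) \supseteq V(G'')$; hence $|N_{G_c}(H)| \geq |V(G_{c-1})| \geq \delta(G_{c-1}) + 1 \geq c$, where the last inequality uses the induction hypothesis applied to singleton subgraphs of $G_{c-1}$. If $v \notin V(H)$, then $H$ is an induced subgraph of $G'$ with no cutset of size at most $k$, so the induction hypothesis on $G' \cong G_{c-1}$ gives $|N_{G'}(H)| \geq c - 1$; since $v$ lies outside $V(H)$ and is complete to $V(G')$, it contributes one more neighbor, giving $|N_{G_c}(H)| \geq (c-1) + 1 = c$.

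There is no real obstacle: the argument is a clean induction whose only non-routine ingredient is the separation step showing $V(H)$ cannot cross the $G'$/$G''$ partition. Once that is established, both subcases follow immediately from the induction hypothesis applied in two different ways (to derive $\delta(G_{c-1}) \geq c-1$, or to $H$ itself as a subgraph of $G_{c-1}$).
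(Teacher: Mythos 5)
Your proof is correct and is essentially identical to the paper's own argument: the same iterated construction ($G_c$ from two copies of $G_{c-1}$ plus a dominating vertex $v$), the same separation step showing $V(H)$ cannot meet both copies, and the same two subcases using $\delta(G_{c-1})+1\geq c$ and the induction hypothesis on $G'$ plus the vertex $v$. The only cosmetic difference is that the base case $c=0$ is trivially true (the bound $|N(H)|\geq 0$ always holds) rather than vacuous.
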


\subsection*{Bounds}

We now study how far we can improve the bounds in the theorems
mentioned thus far. We call $\psi_c$, $\psi$, and $\varphi$ the best
possible bounds in Theorems~\ref{th:conn}, \ref{th:class}, 
and~\ref{th:phi} respectively. More precisely:
\begin{itemize}
\item $\psi_c$ ($c$ stands for ``connectivity'') is the smallest
 function such that for all positive integers $k$ and all 
 graphs $G$, either $G$ is $(k+1)$-connected, or $G$ admits a
 cut-partition $(A,B,C)$ such that $G[A \cup C]$ is $(k+1)$-connected
 and $|C| \leq \psi_c(k)$, or $G$ contains a vertex of degree at most
 $\psi_c(k)$.
\item $\psi$ is the smallest function such that for all positive
 integers $k$, all hereditary classes $\cal G$, and all graphs $G \in
 {\cal G}^k$, either $G \in \cal G$, or $G$ admits a cut-partition
 $(A, B, C)$ such that $G[A \cup C] \in \cal G$ and $|C| \leq
 \psi(k)$, or $G$ has a vertex of degree at most $\psi(k)$.
\item $\varphi$ is the smallest function such that for all positive
 integers $k$, all hereditary classes $\cal G$, and all graphs $G \in
 {\cal G}^k$, either $G \in \cal G$, or $G$ admits a cut-partition $(A,
 B, C)$ such that $G[A] \in \cal G$ and $|C| \leq \varphi(k)$.
\end{itemize} 
The existence of functions $\psi_c$, $\psi$, and $\varphi$ follows
from Theorems~\ref{th:conn}, \ref{th:class}, and \ref{th:phi},
respectively. We remark that all three of these functions are 
non-decreasing. Indeed, the fact that $\psi_c$ is non-decreasing 
follows form the fact that every $(k+2)$-connected graph is also 
$(k+1)$-connected, and the fact that $\psi$ and $\varphi$ are 
non-decreasing follows from the fact that for all hereditary 
classes $\mathcal{G}$, we have that $\mathcal{G}^k \subseteq 
\mathcal{G}^{k+1}$. 

Unfortunately, we have not been able to find exact formulas for the 
functions $\psi_c$, $\psi$, and $\varphi$. We have, however, been able 
to compute certain upper and lower bounds for these three functions, 
as well as exact values for $k = 2$, as stated in the following theorem. 

\begin{theorem} \label{th:bounds} For all positive integers $k$, all the following hold: 
\begin{itemize} 
\item[(1)] $2k-1 \leq \varphi(k) \leq \psi(k) \leq \psi_c(k) \leq 2k^2-1$; 
\item[(2)] $k^2+k-1 \leq \psi(k) \leq \psi_c(k) \leq 2k^2-1$; 
\item[(3)] $\frac{1}{4}k\log_2k < \varphi(k) \leq 2k^2-1$. 
\end{itemize}
Furthermore, 
\begin{itemize} 
\item[(4)] $\varphi(2) = \psi(2) = \psi_c(2) = 5$. 
\end{itemize} 
\end{theorem}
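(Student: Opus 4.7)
The plan is to handle the upper bounds first, then the three lower bounds of (1), (2), and (3) separately, and finally the $k=2$ refinement in~(4).

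\emph{Upper bounds.} All four upper bounds collapse to $\psi_c(k) \leq 2k^2-1$, which is exactly Theorem~\ref{th:conn}. To obtain $\psi(k) \leq \psi_c(k)$, I would apply Theorem~\ref{th:conn} to an arbitrary $G \in \mathcal{G}^k$: since $\mathcal{G}^k$ is hereditary, whichever of the three outcomes occurs, every $(k+1)$-connected graph produced (namely $G$ itself or $G[A \cup C]$) lies in $\mathcal{G}^k$ and hence, by Lemma~\ref{l:inG}, in $\mathcal{G}$, which is exactly what the definition of $\psi$ requires. To obtain $\varphi(k) \leq \psi(k)$, I would reuse the reduction from the proof of Theorem~\ref{th:phi}: the cut-partition outcome yields $G[A] \in \mathcal{G}$ from $G[A \cup C] \in \mathcal{G}$ by hereditarity, and a vertex $v$ of degree at most $\psi(k)$ is converted into the cut-partition $A = \{v\}$, $C = N(v)$, with a fallback to Lemma~\ref{l:inG} when $v$ is universal.

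\emph{Lower bounds.} Each of (1), (2), (3) demands an extremal construction in a carefully chosen hereditary class~$\mathcal{G}$. For $\varphi(k) \geq 2k-1$, I would try a small gluing inside the $k$-closure of a natural class (e.g.\ complete graphs) of two clique-like blocks along a $k$-vertex interface, then verify by direct inspection that every cut-partition with $G[A]$ basic must cut at least $2k-1$ vertices. For $\psi(k) \geq k^2+k-1$ the construction must additionally preclude small $|C|$ with $G[A\cup C]$ basic; my candidate is a bouquet of $k+1$ dense $(k+1)$-connected blocks glued through $k$-element interfaces, arranged so that isolating one block within the class requires roughly $k(k+1)$ cut vertices. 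For $\varphi(k) > \frac{1}{4} k \log_2 k$, I would adapt the recursive doubling used for Theorem~\ref{no-phi-c}: define $G_c$ as two disjoint copies of $G_{c-1}$ together with $k$ fresh vertices complete to both copies, and iterate so that every induced subgraph without a cutset of size at most~$k$ pays at least one ``boundary'' vertex per level of recursion; with the base case and $c$ tuned so that $c > \frac{1}{4} k \log_2 k$ and $\delta(G_c) > c$, the induction yields $|N(H)| \geq c$ and simultaneously defeats the low-degree escape clause.

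\emph{Part (4) and main obstacle.} By the chain in (1) and the lower bound $\psi(2) \geq 5$ from (2), it suffices to prove $\psi_c(2) \leq 5$ and $\varphi(2) \geq 5$, since these then squeeze $\varphi(2) \leq \psi(2) \leq \psi_c(2)$ to the common value~$5$. The upper bound is a sharpening of Theorem~\ref{th:conn} at $k=2$: rerun that proof and tighten the case analysis at the places where the generic estimate $2k^2-1 = 7$ is wasteful. The lower bound is a small explicit graph and hereditary class exhibiting the tight value~$5$, in the spirit of the construction used for (2) but adjusted to meet the stronger requirement $G[A] \in \mathcal{G}$. In my view the main obstacle is the super-linear bound in~(3): ensuring that the recursive family simultaneously withstands the low-degree outcome and controls $|N(H)|$ for every cutset-free induced subgraph, with growth matched to $\frac{1}{4} k \log_2 k$, will require a cleaner invariant than the one that sufficed for Theorem~\ref{no-phi-c}.
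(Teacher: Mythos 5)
Your upper-bound chain $\varphi\leq\psi\leq\psi_c\leq 2k^2-1$ and your reduction of (4) to the two inequalities $\varphi(2)\geq 5$ and $\psi_c(2)\leq 5$ agree with the paper, but the three hard ingredients are sketched in ways that would not go through. First, for $\varphi(k)\geq 2k-1$: gluing two clique-like blocks along a $k$-vertex interface can never beat $k$, because taking $A$ to be one block minus the interface and $C$ the interface gives a cut-partition with $G[A]$ basic and $|C|=k$; this is exactly the example the paper uses for the weaker bound $\varphi(k)\geq k$. To reach $2k-1$ (and simultaneously $\psi(k)\geq k^2+k-1$) one needs a single richer graph in the $k$-closure of the complete graphs in which every sufficiently large clique has at least $2k-1$ outside neighbors, every vertex has degree at least $k^2+k-1$, and there are no simplicial vertices (so the ``$G[A\cup C]$ basic'' outcome never fires); the paper builds this with cliques $A_1,\dots,A_{k+1}$ decorated by attachment vertices $v_1,\dots,v_k,u_1,\dots,u_k$ and then glues many copies. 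Your separate ``bouquet'' sketch for $\psi$ does not identify that the binding quantity there is the minimum degree together with the absence of simplicial vertices, not the size of a cutset isolating a block.

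Second, for part (3) your plan to recycle the recursion behind Theorem~\ref{no-phi-c} misses the defining constraint of $\varphi$: the graph must lie in $\mathcal{G}^k$ for a hereditary basic class $\mathcal{G}$, and the quantity to be bounded is $|N_G(A)|$ over sets $A$ with $G[A]\in\mathcal{G}$, not $|N(H)|$ over cutset-free induced subgraphs. Adding $k$ vertices complete to two copies at every level leaves the closure: after two levels you have a non-basic induced subgraph containing $2k$ universal vertices, hence with no cutset of size at most $k$, which by Lemma~\ref{l:inG} cannot belong to $\mathcal{G}^k$ unless it is basic (and taking $\mathcal{G}$ to be all graphs makes the bound vacuous). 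The paper's construction is precisely engineered around this: at each doubling step the two halves are glued along sets of total size exactly $k$ (pieces of size $k/2^{i+1}$), while the neighborhood of every clique grows by roughly $k/2$ per level, yielding $\varphi(2^m)\geq\frac12 mk+k-1$ and then $\frac14 k\log_2 k$ for all $k$ by monotonicity. Third, for $\psi_c(2)\leq 5$, ``rerunning the proof of Theorem~\ref{th:conn} and tightening'' cannot succeed: Proposition~\ref{k-weight-power-of-2} shows that the weight invariant driving that proof is stuck at $2k^2-\frac{k}{2}=7$ when $k=2$. The paper instead introduces a new potential (the power $p(X,Y)$ of chunks, Table~\ref{t:wc}) and proves $\psi_c(2)\leq 5$ by a separate induction via Lemmas~\ref{delete-C} and~\ref{chunk}; some argument of this genuinely different kind is needed, and it is absent from your proposal.
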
 

We prove Theorem~\ref{th:bounds} in Section~\ref{sec:bounds}. Note 
that part (2) of this theorem implies that functions 
$\psi$ and $\psi_c$ are quadratic. However, we have not been able to 
determine the order of the function $\varphi$: part (3) gives a 
lower bound of order $k\log k$ and an upper bound of order $k^2$ for 
$\varphi(k)$. We observe that for small values of $k$, part (1) gives 
a better lower bound for $\varphi(k)$ than part (3) does, but for 
large values of $k$, the lower bound from (3) is better. 

We remark that the fact that $\psi_c(k) \leq 2k^2-1$ follows from 
Theorem~\ref{th:conn}, and it is an easy exercise to establish 
the inequalities $\varphi(k) \leq \psi(k) \leq \psi_c(k)$. The inequalities 
$2k-1 \leq \varphi(k)$ and $k^2+k-1 \leq \psi(k)$ are obtained by considering a 
particular graph from the $k$-closure of the class of all complete graphs (the 
same graph yields both of these inequalities). We obtain the inequality 
$\frac{1}{4}k\log_2k < \varphi(k)$ by constructing another particular graph from 
the $k$-closure of the class of all complete graphs. Note that part (1) 
of Theorem~\ref{th:bounds} implies that $\varphi(1) = \psi(1) = \psi_c(1) = 1$ 
(this is also easy to prove from scratch, as the reader can check). As 
part (4) states, we have also been able to deal with the case $k = 2$. To prove 
part (4), it suffices to prove inequalities $5 \leq \varphi(2)$ and $\psi_c(2) 
\leq 5$, for the rest then follows from part (1). We obtain the inequality 
$5 \leq \varphi(2)$ by constructing a suitable graph from the $2$-closure of a 
certain hereditary class. The proof of the inequality $\psi_c(2) \leq 
5$ is more involved, and we refer the reader to Section~\ref{sec:bounds}.

Note that the bound in Mader's theorem is linear, while our general
lower bound for $\varphi$ (the bound from part (3) of Theorem~\ref{th:bounds}) 
is not. This is because in Mader's theorem (Theorem~\ref{mader:c}), the basic 
class $\cal G$ is not arbitrary. This shows how $\varphi$ can be sensitive to 
$\cal G$.

\subsection*{Coloring}

We now give the application of our results mentioned in the Introduction. 

\begin{theorem} \label{th:col} Let $k$ be a positive and $c$ a
 non-negative integer, and let $G$ be a graph such that $\chi(G) >
 \max\{c+2k-2,2k^2\}$. Then $G$ contains a $(k+1)$-connected induced
 subgraph of chromatic number greater than $c$.
\end{theorem}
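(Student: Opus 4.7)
My plan is to prove the contrapositive by induction on $|V(G)|$: if $G$ has no $(k+1)$-connected induced subgraph of chromatic number greater than $c$, then $\chi(G) \leq M$, where $M := \max\{c+2k-2,\,2k^2\}$. I would apply Theorem~\ref{th:conn} to $G$ and split into three cases. If $G$ itself is $(k+1)$-connected, the hypothesis yields $\chi(G) \leq c \leq M$. If $G$ has a vertex $v$ of degree at most $2k^2-1$, then $G-v$ inherits the hypothesis, so induction gives $\chi(G-v) \leq M$; greedily coloring $v$ with a color outside the at most $2k^2-1$ colors on $N(v)$ (introducing a new color if necessary) yields $\chi(G) \leq \max(\chi(G-v),\,2k^2) \leq M$.

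The remaining case is that $G$ admits a cut-partition $(A,B,C)$ with $G[A \cup C]$ being $(k+1)$-connected and $|C| \leq 2k^2-1$. Applying the hypothesis to the $(k+1)$-connected induced subgraph $G[A \cup C]$ forces $\chi(G[A \cup C]) \leq c$, and applying induction to $G[B \cup C]$ (a proper induced subgraph that still satisfies the hypothesis) gives $\chi(G[B \cup C]) \leq M$.

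The main step, and the main obstacle, is to show $\chi(G) \leq M$ in this last case---that is, to combine an $M$-coloring of $G[B \cup C]$ with the $c$-colorability of $G[A \cup C]$ into an $M$-coloring of all of $G$, even though the two colorings may use incompatible palettes on the cutset $C$. My plan is to fix a proper $M$-coloring $\phi_B$ of $G[B \cup C]$ and argue that its restriction $\phi_B|_C$ extends to a proper coloring of $G[A \cup C]$ using no more than $M$ colors in total; this requires leveraging the $(k+1)$-connectedness of $G[A \cup C]$, the bound $|C| \leq 2k^2-1$, and the fact that $\chi(G[A \cup C]) \leq c$ simultaneously. A naive approach using fresh colors on $A$ gives only the bound $c + |C|$, which recovers the earlier bound of~\cite{CPST:subst} but not the sharper $c + 2k-2$; the hard part will be finding the delicate structural argument---most likely a clever color-permutation argument exploiting the many internally disjoint $C$-paths guaranteed by $(k+1)$-connectedness---that brings the extra additive factor down from $|C|$ to $2k-2$.
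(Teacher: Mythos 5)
Your reduction steps are fine (contrapositive, induction, deleting a low-degree vertex, and forcing $\chi(G[A\cup C])\leq c$ and $\chi(G[B\cup C])\leq M$), but the proof stops exactly where the theorem actually lives: the merging of the two colorings across $C$ is asserted as ``the hard part'' and never carried out, so there is a genuine gap. Worse, the route you sketch for closing it is unlikely to work. Starting from Theorem~\ref{th:conn} you only know $|C|\leq 2k^2-1$, and with that information alone the paper itself remarks that one does not seem to get anything beyond Proposition~\ref{prop:coloring}, i.e.\ the bound $c+2k^2-1$; the $(k+1)$-connectedness of $G[A\cup C]$ gives you disjoint paths inside $A\cup C$, but it gives no control over how the palette of an $M$-coloring of $G[B\cup C]$ sits on $C$, and a Kempe-style permutation argument has no obvious way to reconcile up to $2k^2-1$ constrained vertices using only $2k-2$ spare colors.

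The paper's proof uses two ingredients your plan is missing. First, instead of Theorem~\ref{th:conn} it invokes the weighted statement, Corollary~\ref{cor-main}, which bounds the $k$-weight $w_B^k(C)\leq 2k^2-1$; this simultaneously forces the set $C_S$ of vertices of $C$ with more than $k$ neighbors in $B$ to have size at most $2k-1$, and bounds the number of edges between $A\cup C_W$ and $B$ by $2k^2-k r-1$ (where $r\leq|C_S|$ is the number of color classes meeting $C_S$ in an optimal coloring of $G[B\cup C_S]$). Second, the merging is done not by permuting colors along connections but by the K\H{o}nig-type Lemma~\ref{lemma:col}: for a partition $(X,Y)$ with $q$ cross edges, $\chi(G)\leq\max\{\chi(G[X]),\chi(G[Y]),q+1\}$. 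One reserves the $r$ color classes through $C_S$, applies Lemma~\ref{lemma:col} to $X=A\cup C_W$ and $Y=B\smallsetminus S$ with $q\leq M-r-1$, and handles the extremal case $r=2k-1$ by a separate argument that merges one color class of $A\cup C_W\cup\{c_S\}$ with a class through $C_S$. So the decisive leverage is the edge-count (weight) bound from Corollary~\ref{cor-main} plus a matching argument, not the connectivity of $G[A\cup C]$; without replacing your cardinality bound on $C$ by this weight information, the step you flag as the obstacle cannot be completed along the lines you propose.
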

\noindent 

Clearly, Theorem~\ref{th:col} is an improvement of Theorem~\ref{alon}. 
To see how our method works, we prove the following proposition, which 
is a weaker version of Theorem~\ref{th:col}. 
\begin{proposition} \label{prop:coloring} Let $k$ and $c$ be positive integers, 
and let $G$ be a graph such that $\chi(G) > c+2k^2-1$. Then $G$ contains a 
$(k+1)$-connected induced subgraph $H$ of chromatic number greater 
than $c$. 
\end{proposition}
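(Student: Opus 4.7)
The plan is to apply Theorem~\ref{th:sf} to a chromatic-critical subgraph of $G$, and then use a coloring-extension argument to show that the highly connected subgraph it produces must have chromatic number greater than $c$.

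First I would let $H$ be an induced subgraph of $G$ with $\chi(H) > c + 2k^2 - 1$ of minimum vertex count; such an $H$ exists by the hypothesis on $\chi(G)$. Minimality forces $H$ to be $(c + 2k^2)$-critical, and the standard argument (extending a $(c+2k^2-1)$-coloring of $H - v$ to $v$) then gives $\delta(H) \geq c + 2k^2 - 1$. Since $c \geq 1$, this yields $\delta(H) \geq 2k^2 > 2k^2 - 1$, so Theorem~\ref{th:sf} applies and produces a $(k+1)$-connected induced subgraph $H'$ of $H$ with $\partial_H(H') \subsetneq V(H')$ and $|\partial_H(H')| \leq 2k^2 - 1$. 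It would then suffice to prove $\chi(H') > c$, since $H'$ is also an induced subgraph of $G$.

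To establish $\chi(H') > c$, I would argue by contradiction, assuming $\chi(H') \leq c$. Set $C = \partial_H(H')$, $A = V(H') \setminus C$, and $B = V(H) \setminus V(H')$. Then $A$ is non-empty (since $C \subsetneq V(H')$) and $B$ is non-empty (otherwise $H = H'$ would have $\chi \leq c$, contradicting the choice of $H$), so $(A, B, C)$ is a cut-partition of $H$ with $|C| \leq 2k^2 - 1$ and $\chi(H[A]) \leq \chi(H[A \cup C]) = \chi(H') \leq c$. By minimality of $H$, the proper induced subgraph $H[B \cup C]$ admits a proper coloring from a palette $P$ of size $c + 2k^2 - 1$. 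That coloring uses a set $S$ of at most $|C| \leq 2k^2 - 1$ colors on $C$, so $|P \setminus S| \geq c \geq \chi(H[A])$, and I can properly color $H[A]$ using colors from $P \setminus S$. The resulting coloring of $H$ is proper (edges inside $A$ and inside $B \cup C$ are handled by each coloring, there are no edges between $A$ and $B$ by anti-completeness, and edges between $A$ and $C$ are proper since $A$'s colors lie in $P \setminus S$ while $C$'s lie in $S$), giving a $(c + 2k^2 - 1)$-coloring of $H$ and contradicting $\chi(H) > c + 2k^2 - 1$.

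The main obstacle is the coloring-merging step: the naive attempt of combining a coloring of $H[A \cup C]$ with one of $H[B \cup C]$ by permuting colors to agree on $C$ does not work, because the two colorings induce different partitions of $C$ in general. The trick is instead to fix only the coloring of $H[B \cup C]$ and exploit the bound $|C| \leq 2k^2 - 1$ to guarantee that at least $c$ of the $c + 2k^2 - 1$ colors in the palette remain unused on $C$ and can be devoted freely to $H[A]$. This is precisely why the threshold $c + 2k^2 - 1$ in the hypothesis is tailored to match the bound $2k^2 - 1$ from Theorem~\ref{th:sf}.
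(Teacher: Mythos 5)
Your proof is correct and takes essentially the same route as the paper's: the same structural input (Theorem~\ref{th:sf}, equivalently Theorem~\ref{th:conn}) and the identical color-merging step, in which the at most $2k^2-1$ colors used on the small frontier $C$ leave at least $c$ colors free for $A$. The only difference is organizational: you pass to a vertex-minimal induced subgraph of chromatic number greater than $c+2k^2-1$ to obtain the minimum-degree hypothesis and to color $H[B\cup C]$, whereas the paper runs an induction on $|V(G)|$ (deleting a low-degree vertex or recursing into $G[B\cup C]$); your criticality device is in fact exactly the one the paper uses later in its proof of Theorem~\ref{th:col}.
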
 
\begin{proof} 
We assume inductively that the statement holds for graphs on fewer than 
$|V(G)|$ vertices. We now apply Theorem~\ref{th:conn} to $G$. If
$G$ is $(k+1)$-connected, then we set $H = G$, and we are done. If $G$
contains a vertex $v$ of degree at most $2k^2 - 1$, then we see that
$\chi(G \sm v) > c+2k^2-1$ (we use the fact that $c > 0$), and we apply the 
induction hypothesis to $G\sm v$. We may therefore assume that $G$ admits 
a cut-partition $(A,B,C)$ such that $G[A \cup C]$ is $(k+1)$-connected and 
$|C| \leq 2k^2-1$. We may further assume that $\chi(G[B \cup C]) \leq 
c+2k^2-1$, for otherwise, we apply the induction hypothesis to 
$G[B \cup C]$, and we are done. If $\chi(G[A]) \leq c$, then we color 
$G[B \cup C]$ with at most $c+2k^2-1$ colors, at most $|C| \leq 2k^2-1$ of 
which are used on $C$, and we color $G[A]$ with the remaining $c$ colors, 
thus obtaining a proper coloring of $G$ that uses only $c+2k^2-1$ colors, 
contrary to the assumption that $\chi(G) > c+2k^2-1$. Thus, $\chi(G[A]) > c$, 
and consequently, $\chi(G[A \cup C]) > c$. We now set $H = G[A \cup C]$, 
and we are done. 
\end{proof} 

The proof of Theorem \ref{th:col} is given in Section~\ref{sec:col}. 
It is similar to (but more complicated than) the proof of 
Proposition~\ref{prop:coloring}. In fact, we could not derive 
Theorem~\ref{th:col} from theorems stated earlier in this section 
(which do not seem to lead to anything stronger than 
Proposition~\ref{prop:coloring}). Rather, we use a more technical 
result (namely, Corollary~\ref{cor-main}) proven in 
Section~\ref{sec:thPart}. 

\medskip 

We now show that Theorem~\ref{th:col} has a corollary stated in terms
of operations that perverve $\chi$-boundedness, a notion introduced by
Gy\'arf\'as in~\cite{gyarfas:perfect}. When $G$ is a graph, we denote
by $\omega(G)$ the size of a maximum clique of $G$. A hereditary
class $\mathcal{G}$ is {\em $\chi$-bounded} if there exists a function
$f:\mathbb{N} \rightarrow \mathbb{N}$ such that for all graph $G \in
\mathcal{G}$, $\chi(G) \leq f(\omega(G))$. Under these circumstances,
we also say that $\mathcal{G}$ is $\chi$-bounded {\em by} $f$, as well
as that $f$ is a {\em $\chi$-bounding function} of $\mathcal{G}$. It is
easy to see that if a hereditary class is $\chi$-bounded, then there 
exists a non-decreasing function $f$ such that the class is 
$\chi$-bounded by $f$.

\begin{theorem}\label{chi-new} 
 Let $k$ be a positive integer, and let $\mathcal{G}$ be a hereditary
 class of graphs, $\chi$-bounded by a non-decreasing
 function~$f$. Then $\mathcal{G}^k$ is $\chi$-bounded by the function
 defined by $g(n) = \max\{f(n)+2k-2, 2k^2\}$.
\end{theorem}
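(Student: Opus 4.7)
The plan is to argue by contradiction using Theorem~\ref{th:col} combined with Lemma~\ref{l:inG}. Fix a graph $G \in \mathcal{G}^k$, set $n = \omega(G)$, and suppose for contradiction that $\chi(G) > g(n) = \max\{f(n)+2k-2, 2k^2\}$. I would then invoke Theorem~\ref{th:col} with the choice $c = f(n)$ (noting that $c \geq 0$ since $f$ maps into $\mathbb{N}$), which applies precisely because $\chi(G) > \max\{c + 2k - 2, 2k^2\}$. This yields a $(k+1)$-connected induced subgraph $H$ of $G$ with $\chi(H) > f(n)$.

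The next step is to locate $H$ inside the basic class $\mathcal{G}$. Since $\mathcal{G}$ is hereditary, $\mathcal{G}^k$ is hereditary (as noted just after the definition of $k$-closure), and so $H \in \mathcal{G}^k$. Because $H$ is $(k+1)$-connected, Lemma~\ref{l:inG} forces $H \in \mathcal{G}$. Hence $\chi(H) \leq f(\omega(H))$, and since $H$ is an induced subgraph of $G$ we have $\omega(H) \leq \omega(G) = n$; monotonicity of $f$ then gives $\chi(H) \leq f(n)$, contradicting $\chi(H) > f(n)$. This contradiction establishes $\chi(G) \leq g(n)$.

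The argument is essentially a one-line deduction once Theorem~\ref{th:col} is in hand, so there is no serious obstacle: the only things to verify carefully are the applicability of Theorem~\ref{th:col} (the hypothesis $c \geq 0$ and the strict inequality on $\chi(G)$), and that the hereditary-closure property plus Lemma~\ref{l:inG} really do put the $(k+1)$-connected subgraph back into the basic class $\mathcal{G}$. Monotonicity of $f$ is used in exactly one place, to pass from $\omega(H) \leq n$ to $f(\omega(H)) \leq f(n)$.
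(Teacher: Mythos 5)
Your proof is correct and matches the paper's own argument: both apply Theorem~\ref{th:col} with $c = f(\omega(G))$, use the heredity of $\mathcal{G}^k$ together with Lemma~\ref{l:inG} to place the $(k+1)$-connected subgraph $H$ in $\mathcal{G}$, and then invoke monotonicity of $f$ to reach a contradiction. No issues.
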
 

\begin{proof}
Suppose for a contradiction that a graph $G\in {\cal G}^k$ satisfies $\chi(G) >
\max\{f(\omega(G))+2k-2, 2k^2\}$. By Theorem~\ref{th:col}, there
exits a $(k+1)$-connected induced subgraph $H$ of $G$ with chromatic
number greater than $f(\omega(G))$. Since $H$ is $(k+1)$-connected
and is in ${\cal G}^k$, Lemma~\ref{l:inG} implies that $H \in {\cal G}$. Since $f$
is non-deacreasing, we have $\chi(H) > f(\omega(G)) \geq f(\omega(H)),$ a 
contradiction to the fact that $\cal G$ is $\chi$-bounded by $f$ and $H \in 
\mathcal{G}$. 
\end{proof}

In Section~\ref{sec:col}, we prove that there is in a certain sense an 
equivalence between results about highly connected induced subgraphs 
of high chromatic number in graphs of high chromatic number, and results 
about preserving $\chi$-boundedness under the operation of gluing along 
a bounded number of vertices (see Proposition~\ref{chi-conn-class-equiv}).

\subsection*{Open questions}

Let us now mention a few open questions that arise naturally from the
results of this paper. While Theorem~\ref{th:bounds} answers certain 
questions about the functions $\varphi$, $\psi$, and $\psi_c$, it leaves 
a number of other questions open. While we could deal with the cases $k = 1$ and 
$k = 2$ (the former follows from part (1) and the latter from part (4) of 
Theorem~\ref{th:bounds}), we could not compute the exact values of $\varphi(k)$, 
$\psi(k)$, and $\psi_c(k)$ for $k \geq 3$. In fact, we do not even know whether 
$\varphi$, $\psi$, and $\psi_c$ are computable. Next, while we could prove that 
$\psi$ and $\psi_c$ are quadratic functions, we have not been able to determine 
the order of the function $\varphi$. We also do not know whether $\psi = \psi_c$. 
Further, even though $\varphi(k) = \psi(k)$ for $k \in \{1,2\}$, we believe (but 
have so far not been able to prove) that $\varphi$ and $\psi$ are different 
functions: we believe that there exists some constant $k_0 \geq 3$ such that for 
all $k \geq k_0$, $\varphi(k) < \psi(k)$. One reason for this is that our work 
on lower bounds for $\varphi$ and $\psi$ suggests that these two functions behave 
differently. In particular, the construction that gave us a quadratic lower bound 
for $\psi$ yields only a linear lower bound for $\varphi$. While we could 
ultimately prove that $\varphi$ is superlinear (with a lower bound of order 
$k\log k$), the construction that we needed in order to obtain this lower bound 
for $\varphi$ is a lot more complicated than the construction that gave us a 
quadratic lower bound for $\psi$.

Our work suggests that assuming a high minimum degree and assuming a
high average degree have different implications. Indeed, we could
generalize the minimum-degree version of Mader's theorem
(see Theorem~\ref{th:sf}), but we proved that the average-degree version
cannot be generalized in the same way (see Theorem~\ref{th:d}).
Therefore, we wonder whether the best bound is the same for the two
versions of Mader's theorem (Theorems~\ref{th:maderdelta} and~\ref{th:maderd}). 
All known proofs of these theorems rely on the average degree, including 
the proof due to Hajnal~\cite{hajnal:ext}, which established the best bound 
known so far. To support the idea that a direct proof and a different bound 
might exist for Theorem~\ref{th:maderdelta}, we give a proof of the following 
known special case. This proof is similar to our proof that $\psi_c(2)=5$ (the 
details are simpler and we obtain a slightly better value, namely $4$). The graph 
represented in Figure~\ref{fig:mindeg4} shows that Theorem~\ref{th:24} is best 
possible. To our knowledge, this proof is new, and it really relies on minimum 
degree rather than average degree. 

\begin{figure}
\begin{center}
\includegraphics{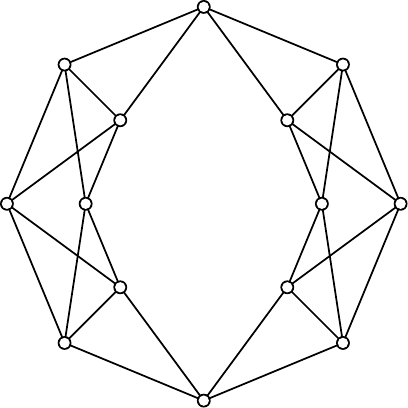}
\end{center}
\caption{A graph with minimum degree 4 and no 3-connected subgraph\label{fig:mindeg4}}
\end{figure}

\begin{theorem}[Mader~\cite{mader:4k}]
 \label{th:24}
 If $G$ has minimum degree greater than~4, then $G$ contains a
 3-connected induced subgraph. 
\end{theorem}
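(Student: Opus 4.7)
The plan is to pass to a vertex-minimum counterexample $G$ (with $\delta(G)\geq 5$ and no $3$-connected induced subgraph) and to exploit two nested small cuts. Minimality forces every proper induced subgraph $G'$ of $G$ to satisfy $\delta(G')\leq 4$, because otherwise a $3$-connected induced subgraph of $G'$ would also be a $3$-connected induced subgraph of $G$. Applied to $G-v$ for each $v\in V(G)$, this yields the key local property that every vertex of $G$ has a neighbor of degree exactly $5$, namely the vertex of $G-v$ of degree $\leq 4$ (which must be a neighbor of $v$ of degree $5$ in $G$).

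Since $G$ is not $3$-connected, I would choose a pair $(C,D)$ minimizing $|D|+|C|$ over all cutsets $C$ of $G$ with $|C|\leq 2$ and components $D$ of $G-C$, and set $H=G[D\cup C]$. Every vertex of $D$ has all its $\geq 5$ neighbors in $V(H)$, giving $|D|\geq 6-|C|$, and every $c\in C$ has a neighbor in $D$ (else $C\setminus\{c\}$ would produce a strictly smaller pair). Since $G$ has no $3$-connected induced subgraph, $H$ itself is not $3$-connected, so I would pick a minimal cutset $C''$ of $H$ with $|C''|\leq 2$ and let $D_1,\ldots,D_m$ ($m\geq 2$) be the components of $H-C''$. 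If some $D_i$ avoids $C$, then $D_i\subseteq D$ and a short computation shows $N_G(D_i)\subseteq C''$, so $(C'',D_i)$ is a candidate pair of strictly smaller total size (equality would force $H-C''$ to have only one component, contradicting the choice of $C''$), contradicting the minimality of $|V(H)|$. The only remaining possibility is the ``crossing'' configuration: $m=2$, $|C|=2$, $C=\{c_1,c_2\}$ with $c_1\in D_1$, $c_2\in D_2$, $C''\cap C=\emptyset$, so $C''\subseteq D$ and $c_1c_2\notin E(G)$ (since $c_1,c_2$ are separated in $H$ by a set not containing them).

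The main obstacle is this crossing case, and it is resolved in the spirit of (but more easily than) the proof that $\psi_c(2)\leq 5$. The key identity is $|D_1|+|D_2|=|D|+2-|C''|$, and any vertex of $D_i\setminus\{c_i\}$ lies in $A=V(H)\setminus C$ and hence has its $\geq 5$ neighbors inside $D_i\cup C''$; this forces either the degenerate configuration $D_i=\{c_i\}$ for some $i$, or the two-sided bound $|D_i|\geq 6-|C''|$. In the degenerate case, minimality of $C''$ makes both elements of $C''$ adjacent to $c_i$, and then a set of the form $\{c_{3-i}\}\cup C''$ or a proper subset thereof (using that $c_i$ has most of its neighbors in $V(G)\setminus V(H)$) yields a smaller candidate pair. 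In the non-degenerate case, the degree-$5$-neighbor property from the first paragraph, applied to $c_1,c_2$ and to suitable vertices of $C''$, pins down the configuration tightly enough to extract a strictly smaller cut pair, again contradicting the minimality of $|V(H)|$. The extremal graph of Figure~\ref{fig:mindeg4} shows that the bound $4$ on the minimum degree is tight.
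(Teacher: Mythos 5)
Your reduction to the ``crossing'' configuration is sound (the minimal-counterexample setup, the degree-$5$-neighbor property, the choice of $(C,D)$ minimizing $|C|+|D|$, and the argument that any component of $H-C''$ disjoint from $C$ yields a strictly smaller pair are all correct, and they do force $m=2$, $|C|=2$, $c_i\in D_i$, $C''\cap C=\emptyset$). But the crossing case is exactly the crux of the theorem, and your treatment of it is not a proof. In the degenerate subcase ($D_i=\{c_i\}$, say $i=1$ with $|C''|=2$), the set $\{c_2\}\cup C''$ has three vertices, so it is not admissible for your candidate pairs, which by construction range only over cutsets of size at most $2$; and no proper subset of it need be a cutset of $G$ isolating anything small: $D_2\smallsetminus\{c_2\}$ has $N_G(D_2\smallsetminus\{c_2\})\subseteq\{c_2\}\cup C''$, and for each $2$-element subset the remaining third vertex can reconnect it through $c_1$ and $B=V(G)\smallsetminus V(H)$ (note $C''$ itself need not even disconnect $G$, since $c_1$ and $c_2$ can both attach to a common component of $G[B]$). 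In the non-degenerate subcase you give no argument at all --- ``pins down the configuration tightly enough to extract a strictly smaller cut pair'' is an assertion, not a step. The structural reason this is hard is that the natural pieces arising here, such as $D_1\cup C''$, have frontier $\{c_1\}\cup C''$ of size up to $3$, so the minimality of $(C,D)$ (which only concerns cutsets of size at most $2$) cannot be reapplied, and your single local fact ``every vertex has a neighbor of degree exactly $5$'' is far too weak to rule the configuration out.

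By contrast, the paper handles precisely this accumulation of boundary vertices by proving a stronger statement by induction: every graph with at least three vertices and no $3$-connected induced subgraph contains a weighted collection of low-degree vertices (three of degree $\le 2$; or two of degree $\le 2$ and two of degree $\le 4$; or one of degree $\le 2$ and four of degree $\le 4$; or six of degree $\le 4$), and then checks that this ``deficiency of six'' survives gluing two pieces along a cutset of size $\le 2$, since at most two cheap vertices per side can be absorbed into $C$. Some bookkeeping of this kind (or the chunk/power potential used for $\psi_c(2)\le 5$) appears to be genuinely needed; to salvage your approach you would have to supply an analogous quantitative invariant that controls what happens when frontiers of size $3$ appear, rather than relying on finding ever smaller $2$-cuts.
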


\begin{proof}
 We prove by induction on the number of vertices a variant of the contrapositive
 statement (which clearly implies the theorem):
 
\medskip

\noindent{\it For all graphs $G$, if $G$ has at least three vertices
 and $G$ contains no 3-connected induced subgraph, then $G$ contains either:
 \begin{enumerate}[(i)]
 \item\label{i:a} three vertices of degree at most $2$;
 \item\label{i:b} two vertices of degree at most $2$ and two vertices of degree at most $4$;
 \item\label{i:c} one vertex of degree at most $2$ and four vertices of degree at most $4$;
 \item\label{i:d} six vertices of degree at most $4$;
 \end{enumerate}}

If $|V(G)| = 3$, then (\ref{i:a}) holds, and so from here on, we assume that $|V(G)| 
\geq 4$.

Suppose first that $G$ contains a vertex $v$ of degree at most~$2$. We apply
the induction hypothesis to $G\sm v$. If $G\sm v$ satisfies~(\ref{i:a}), then $G$
satisfies~(\ref{i:a}) or~(\ref{i:b}). If $G\sm v$ satisfies~(\ref{i:b}), then $G$ 
satisfies~(\ref{i:a}), (\ref{i:b}), or~(\ref{i:c}). If $G\sm v$ 
satisfies~(\ref{i:c}), then $G$ satisfies~(\ref{i:b}) or~(\ref{i:c}). If $G\sm v$
satisfies~(\ref{i:d}), then $G$ satisfies~(\ref{i:c}). So we may assume that 
$\delta(G) \geq 3$. 

Since $G$ itself is not $3$-connected and $|V(G)| \geq 4$, $G$ has a cut-partition 
$(A,B,C)$ with $|C|\leq 2$. We set $G_A = G[A \cup C]$ and $G_B = G[B \cup C]$. 
Since $\delta(G) \geq 3$, we have $|V(G_A)|, |V(G_B)| \geq 4$, 
so we may apply the induction hypothesis to $G_A$ and $G_B$. Note that
the outcome~(\ref{i:a}) may not hold for $G_A$ or $G_B$ because $G$
has no vertex of degree at most~$2$. Also, all vertices of degree~$2$ of
$G_A$ and $G_B$ are in $C$.
 
If one of $G_A$ or $G_B$ satisfies~(\ref{i:d}), then $G$ satisfies~(\ref{i:d}). 
So we are left with three cases. If $G_A$ and $G_B$ both satisfy~(\ref{i:c}) 
then $G$ satisfies~(\ref{i:d}). If one of $G_A$ and $G_B$ satisfies~(\ref{i:c}) 
and the other one satisfies~(\ref{i:b}), then $G$ satisfies~(\ref{i:d}) 
(one vertex of degree at most~$4$ in $G$ is in $C$). If $G_A$ and $G_B$ both 
satisfy~(\ref{i:b}), then $G$ satisfies~(\ref{i:d}) (two vertices of degree at 
most~$4$ in $G$ are in $C$).
\end{proof}

\section{Proof of Theorems~\ref{th:sf} and~\ref{th:conn}}
\label{sec:thPart}

Given a positive integer $k$, a graph $G$, a set $Z \subseteq V(G)$,
and a vertex $v \in V(G) \smallsetminus Z$, we say that $v$ is {\em
 $k$-weak} with respect to $Z$ if $v$ has at most $k$ neighbors in $Z$,
and we say that $v$ is {\em $k$-strong} with respect to $Z$ if $v$ has at
least $k+1$ neighbors in $Z$. The {\em $k$-weight} of $v$ with
respect to $Z$, denoted by $w_Z^k(v)$, is defined as follows:

\begin{itemize} 
\item if $v$ has no neighbors in $Z$, then $w_Z^k(v) = 1$;
\item if $v$ has at least one neighbor in $Z$, and $v$ is $k$-weak with
 respect to $Z$, then $w_Z^k(v)$ is the number of neighbors that $v$
 has in $Z$;
\item if $v$ is $k$-strong with respect to $Z$, then $w_Z^k(v) = k$. 
\end{itemize} 

\noindent 
Given disjoint sets $Y,Z \subseteq V(G)$, the {\em $k$-weight} of $Y$
with respect to $Z$, denoted by $w_Z^k(Y)$, is the sum of $k$-weights of 
the vertices of $Y$ with respect to $Z$ (if $Y = \emptyset$, then 
$w_Z^k(Y) = 0$). Clearly, $|Y| \leq w_Z^k(Y) \leq k|Y|$. Furthermore, if 
$Z \neq \emptyset$, then $w_Z^k(Y) \leq |Y||Z|$. (If $Z = \emptyset$, 
then $w_Z^k(Y) = |Y|$.) Note also that if $(A,B,C)$ is a cut-partition of 
a graph $G$ such that $|C| \leq k$, then $w_B^k(C) \leq k|C| \leq k^2 
\leq 2k^2-1$. 

When clear from the context, we sometimes write ``weak,'' ``strong,'' 
and ``weight'' instead of ``$k$-weak,'' ``$k$-strong,'' and ``$k$-weight,'' 
respectively. Similarly, when there is no risk of confusion, we often omit 
the superscript $k$ and write simply $w_Z(v)$ and $w_Z(Y)$ instead of 
$w_Z^k(v)$ and $w_Z^k(Y)$, respectively. 

Given two cut-partitions $(A,B,C)$ and $(A',B',C')$ of a graph $G$, we
say that $(A',B',C')$ is {\em better} than $(A,B,C)$ if $A' \cup C'
\subsetneq A \cup C$ (equivalently: $B \subsetneq B'$).

\begin{lemma} \label{lemma-alg} Let $k$ be a positive integer. 
 There exists an algorithm with the following properties:
\begin{itemize} 
\item Input: a graph $G$ such that $\delta(G) > 2k^2-1$, and a
 cut-partition $(A,B,C)$ of $G$ such that $w_B(C) \leq 2k^2-1$.
\item Output: either the true statement ``$G[A \cup C]$ is
 $(k+1)$-connected,'' or a cut-partition $(A',B',C')$ of
 $G$ that is better than $(A,B,C)$ and satisfies $w_{B'}(C') \leq 2k^2-1$.
\item Running time: $O(n^{k+2})$, where $n$ is the number of vertices of the input graph $G$. 
\end{itemize} 
\end{lemma}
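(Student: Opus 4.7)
I would have the algorithm enumerate every subset $D\subseteq V(G[A\cup C])$ with $|D|\leq k$; there are $O(n^k)$ such subsets. For each one it runs a BFS/DFS on $G[A\cup C]\sm D$ in $O(n^2)$ time to decide whether $D$ is a cutset of $G[A\cup C]$ and, if so, compute the components $X_1,\ldots,X_t$ of $G[A\cup C]\sm D$. If no such $D$ is a cutset, the algorithm outputs ``$G[A\cup C]$ is $(k+1)$-connected,'' which is correct by definition. Otherwise, for each pair $(D,X_i)$ with $X_i\cap A\neq\emptyset$ the algorithm constructs and tests the candidate
\[
A' = X_i\cap A,\qquad C' = (X_i\cap C)\cup D,\qquad B' = V(G)\sm(X_i\cup D),
\]
returning the first one that is a cut-partition better than $(A,B,C)$ and satisfies $w_{B'}(C')\leq 2k^2-1$. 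The total running time is $O(n^{k+2})$. The cut-partition checks are routine: $A'$ is anti-complete to $B'$ because $X_i$ is a component of $G[A\cup C]\sm D$ and $A$ is anti-complete to~$B$, so $N_G(A')\subseteq X_i\cup D$; and $A'\cup C' = X_i\cup D\subsetneq A\cup C$ since some other $X_j$ lies in~$B'$.

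The weight computation is the heart of the argument. Let $s_i = w_B^k(X_i\cap C)$. Any $v\in X_i\cap C$ has no neighbour in $X_j$ for $j\neq i$ (distinct components of $G[A\cup C]\sm D$ are pairwise non-adjacent in~$G$ since they all lie inside $A\cup C$), so its neighbours in $B' = B\cup\bigcup_{j\neq i}X_j$ coincide with its neighbours in~$B$; hence $w^k_{B'}(v) = w^k_B(v)$ and $w_{B'}(X_i\cap C) = s_i$. Together with $w_{B'}(D)\leq k|D|\leq k^2$ this gives $w_{B'}(C')\leq s_i + k^2$, so the candidate passes as soon as $s_i\leq k^2-1$. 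Since the sets $X_i\cap C$ are pairwise disjoint subsets of~$C$, we also have $\sum_i s_i\leq w_B(C)\leq 2k^2-1$. In particular, whenever at least two components meet~$A$, pigeonhole on those two yields one with $s_i\leq\lfloor(2k^2-1)/2\rfloor = k^2-1$, and the associated candidate is accepted.

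The main obstacle is the ``bad case'' in which exactly one component, say~$X_1$, meets~$A$ and $s_1\geq k^2$; here the hypothesis $\delta(G)>2k^2-1$ is decisive. For $j\geq 2$ every $v\in X_j\subseteq C$ has all its neighbours in $X_j\cup D\cup B$ (none in~$X_1$ nor in any other $X_l$), so $|N_G(v)\cap B|\geq\delta(G)-(|X_j|-1)-|D|>k$, forcing $v$ to be $k$-strong with respect to~$B$ and thus $s_j = k|X_j|$. Combined with $\sum_{j\geq 2}s_j\leq(2k^2-1)-k^2 = k^2-1$ this forces $\sum_{j\geq 2}|X_j|\leq k-1$. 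Exploiting this smallness I would finish the bad case by splitting: either a vertex of $A\cap D$ exists and then its weight in $B'$ is at most $\sum_{j\geq 2}|X_j|\leq k-1$ (an $A$-vertex has no $B$-neighbours), saving a unit so that $w_{B'}(D)\leq k|D|-1$ and the $X_1$-candidate satisfies $w_{B'}(C')\leq k^2 + (k^2-1) = 2k^2-1$; or $A\cap D = \emptyset$, in which case the accounting $w_B(C)\geq s_1 + w_B(C\cap D)\geq k^2 + k|C\cap D|$ combined with $w_B(C)\leq 2k^2-1$ forces $|C\cap D| = |D|\leq k-1$ and hence $w_{B'}(D)\leq k(k-1) = k^2-k$, again producing enough slack; or, finally, Menger's theorem applied to two vertices of $X_1\cap A$ (which has at least two elements, as $|A|\geq 2$ follows from the observation that a single $A$-vertex would need $\geq 2k^2$ neighbours all in~$C$, contradicting $|C|\leq w_B(C)\leq 2k^2-1$) yields a different size-$\leq k$ cutset splitting~$A$ between two components, which is enumerated and handled by the easy pigeonhole case.

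Non-emptiness of $A'$ and $B'$, validity of the resulting cut-partition, and the $O(n^{k+2})$ running time bound then follow by routine bookkeeping.
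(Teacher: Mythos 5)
Your overall strategy (enumerate small cutsets of $G[A\cup C]$, push one side plus the cutset into the new $C'$, and control the weight using the minimum degree) is the paper's strategy, but the weight accounting in your ``bad case'' --- exactly one component $X_1$ meets $A$ and $s_1=w_B^k(X_1\cap C)\geq k^2$ --- does not work as written, and this is the heart of the lemma. In sub-case (a) you conclude $w_{B'}(C')\leq k^2+(k^2-1)$, which implicitly uses $w_{B'}(X_1\cap C)=s_1\leq k^2$; but the bad case is precisely the case $s_1\geq k^2$, and $s_1$ can be as large as $2k^2-1$, so saving one unit on $w_{B'}(D)$ is nowhere near enough. In sub-case (b) the inequality $w_B(C\cap D)\geq k|C\cap D|$ is unjustified (vertices of $D\cap C$ may have most of their neighbours in $X_1$ and be $k$-weak with respect to $B$), and even granting $|D|\leq k-1$ the bound $w_{B'}(C')\leq s_1+k(k-1)$ still needs $s_1\leq k^2+k-1$, which is not available. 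Sub-case (c) cannot serve as a fallback: there need not exist any cutset of size at most $k$ separating two vertices of $A$ (for instance when $A$ is a clique), so Menger gives you nothing and the pigeonhole case may simply never occur. (Two smaller omissions: declaring ``$(k+1)$-connected'' when no cutset is found also requires $|A\cup C|\geq k+2$, which follows from $\delta(G)>2k^2-1$ and $N(A)\subseteq A\cup C$; and you never address cutsets $D$ with $A\subseteq D$, i.e.\ no component meeting $A$ --- these must be shown impossible, or at least harmless.)

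The fix is a cancellation argument rather than crude bounds, and it shows that your $X_1$-candidate always works in the bad case. Let $L=\bigcup_{j\geq 2}X_j\subseteq C$. Since $\sum_{j\geq 2}w_B(X_j)\leq w_B(C)-s_1\leq k^2-1$, each $X_j$ ($j\geq 2$) has at most $k^2-1$ vertices, and then the minimum degree forces every vertex of $L$ to be $k$-strong with respect to $B$ (its degree would otherwise be at most $(k^2-2)+k+k\leq 2k^2-1$); hence $w_B(L)=k|L|$, i.e.\ the vertices leaving the cutset contribute exactly $k|L|$ to $w_B(C)$. On the other side, by subadditivity of the weight and since $D\cap A$ has no neighbours in $B$, one gets $w_{B'}(D)\leq w_L(D)+w_B(D\cap C)\leq |D|\,|L|+w_B(D\cap C)\leq k|L|+w_B(D\cap C)$. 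Therefore
\begin{displaymath}
w_{B'}(C') \;=\; s_1+w_{B'}(D)\;\leq\; s_1+k|L|+w_B(D\cap C)\;=\;w_B(X_1\cap C)+w_B(L)+w_B(D\cap C)\;\leq\; w_B(C)\;\leq\;2k^2-1,
\end{displaymath}
which is exactly how the paper closes this case (and the same strongness bound yields $|C|\leq 2k^2-k$, ruling out $A\subseteq D$ via a degree count on a vertex of $A$). Without this cancellation your case analysis has a genuine gap.
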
 

\begin{proof} 
 Let $G$ be a graph on $n$ vertices such that $\delta(G) > 2k^2-1$, and let $(A,B,C)$
 be a cut-partition of $G$ such that $w_B(C) \leq 2k^2-1$. By a classical
 connectivity test, it can be tested in time $O(n^{k+2})$ whether
 $G[A \cup C]$ is $(k+1)$-connected, and if so, the algorithm
 outputs the true statement ``$G[A \cup C]$ is $(k+1)$-connected''
 and stops. So we may assume that $G[A \cup C]$ is not
 $(k+1)$-connected. Since every vertex in $A$ has degree at least
 $2k^2$, and all neighbors of vertices of $A$ are in $A\cup C$, we
 know that $|A\cup C| \geq 2k^2+1 \geq k+2$. It follows that $G[A \cup C]$ has a
 cutset $S$ such that $|S| \leq k$. The cutset $S$ and the components
 of $G[A \cup C] \sm S$ can be found in time $O(n^{k+2})$. Let $S_A$ be the 
 vertex-set of a component of $G[A \cup C] \sm S$, and let $S_B = (A \cup C) 
 \smallsetminus (S \cup S_A)$. Then $(S_A,S_B,S)$ is a cut-partition of 
 $G[A \cup C]$ (see Figure~\ref{fig:cutpartini}). Computing $w_B(C \cap S_A)$ and 
 $w_B(C \cap S_B)$ takes at most  $O(n^2)$ time, and since $w_B(C) \leq 2k^2 -1$, 
 up to the symmetry between $S_A$ and $S_B$, we may assume that $w_B(C \cap S_A) 
 \leq k^2-1$. (In particular, $|C \cap S_A| \leq k^2-1$.) 

\begin{figure}
\begin{center}
\includegraphics[scale=0.6]{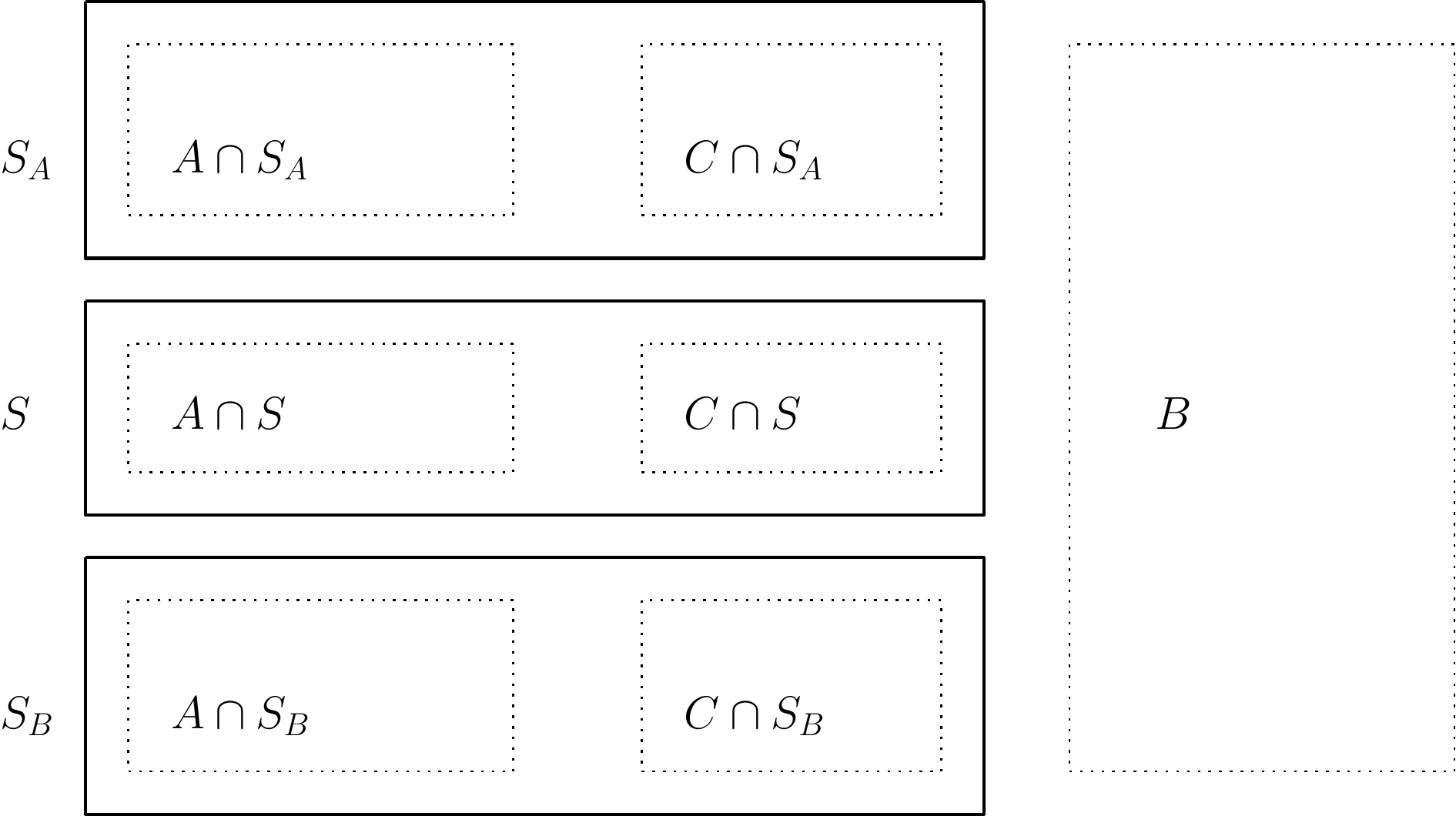}
\end{center} 
\caption{The cut-partition $(S_A,S_B,S)$ of $G[A \cup C]$} \label{fig:cutpartini} 
\end{figure}

 Suppose first that $A \cap S_A \neq \emptyset$. We then set $A' =
 A \cap S_A$, $B' = B \cup S_B$, and $C' = S \cup (C \cap S_A)$. We
 see that $(A', B', C')$ is a cut-partition of $G$ that is better
 than $(A, B, C)$. Note that $w_{B'}(C \cap S_A) = w_B(C \cap S_A)$
 because $S_A$ is anti-complete to $S_B$; consequently, $w_{B'}(C \cap S_A) \leq 
 k^2-1$ and $$w_{B'}(C')\, = \,w_{B'}(S) + w_{B'}(C \cap S_A)\, \leq \,k|S| + 
 (k^2 -1)\, \leq \, 2k^2-1.$$ The algorithm now outputs the cut-partition $(A',B',C')$ of 
 $G$ and stops. 

 From here on, we assume that $A \cap S_A = \emptyset$, so that $C \cap S_A
 \neq \emptyset$. It follows that for every vertex $v \in C \cap S_A$, 
 $N_G(v) \subseteq ((C \cap S_A) \sm \{v\}) \cup S \cup B$. In
 particular, if $v$ is weak with respect to $B$, then its degree is at
 most 
 \begin{displaymath}
 \begin{array}{rcl} 
 (|C \cap S_A|-1)+|S|+k & \leq & (k^2-1)-1+k+k 
 \\
 & = & k^2+2k-2
 \\
 & \leq & 2k^2-1,
 \end{array}
 \end{displaymath} 
 a contradiction to our assumption on $\delta(G)$.
 It follows that all vertices of $C \cap S_A$ are strong with
 respect to $B$. Since $C \cap S_A \neq \emptyset$ and $w_B(C) \leq 2k^2-1$, 
 this implies: 
\begin{eqnarray}
 w_B (C \cap S_A) & = &k|C \cap S_A| \label{e:1}\\ 
 |C|& \leq &2k^2-k \label{e:2}
\end{eqnarray}

\begin{figure}
\begin{center}
\includegraphics[scale=0.6]{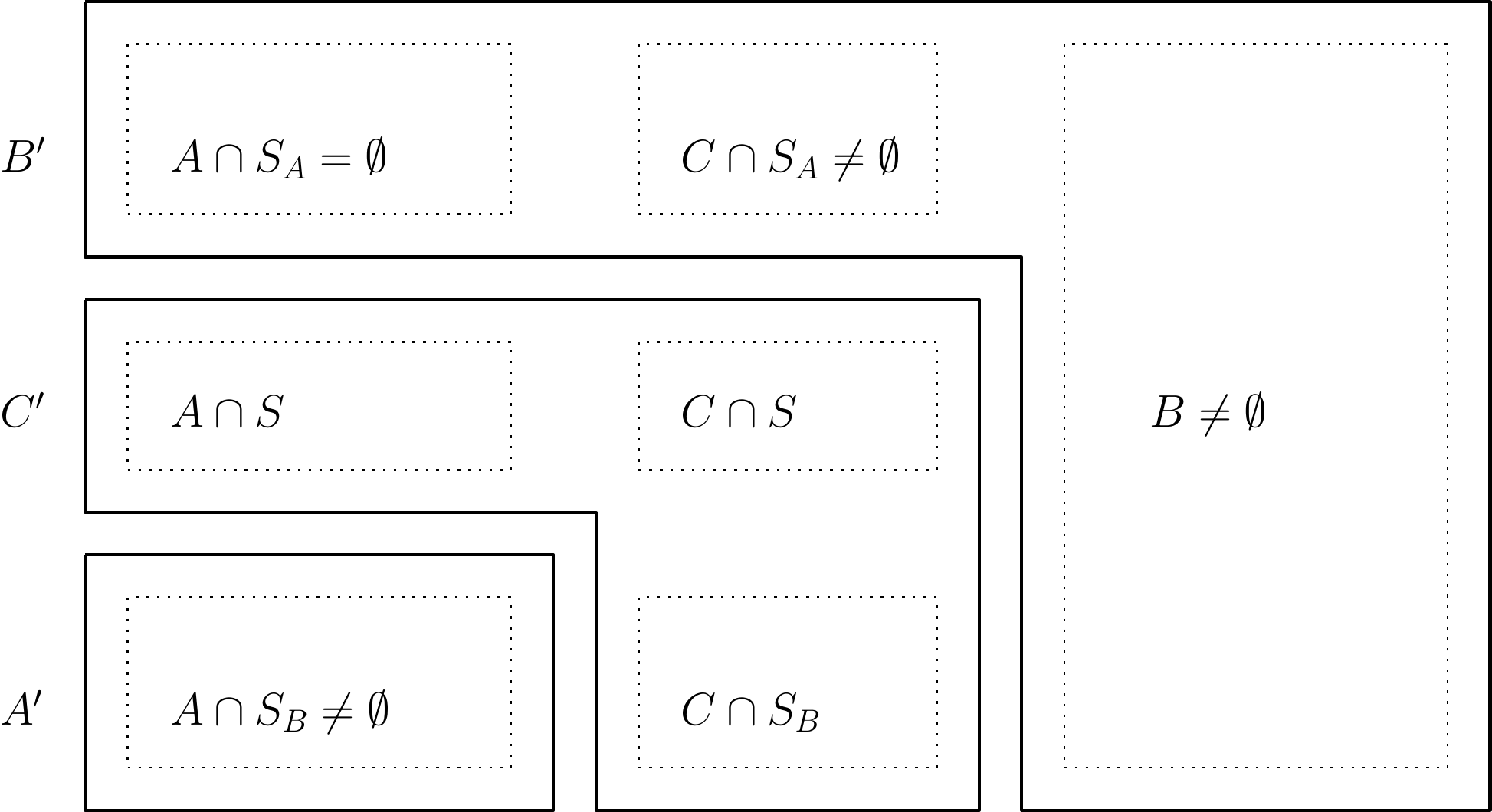}
\end{center} 
\caption{The cut-partition $(A',B',C')$ of $G$ for the case when $A \cap S_A = \emptyset$} \label{fig:cutpart} 
\end{figure}

If $A \cap S_B = \emptyset$, then by~(\ref{e:2}), 
a vertex $v$ from $A \subseteq S$ has at most
$$|S \sm \{v\}| + |C|\, \leq \, (k-1) + (2k^2 -k) \, = \, 2k^2-1$$
neighbors, a contradiction to our assumption on $\delta(G)$. Thus, 
$A \cap S_B \neq \emptyset$. We then set $A' = A \cap S_B$, 
$B' = B \cup S_A$, and $C' = S \cup (C \cap S_B)$ 
(see Figure~\ref{fig:cutpart}). We see that $(A',B',C')$ is a 
cut-partition of $G$ that is better than $(A,B,C)$, and furthermore, 
\begin{displaymath} 
\begin{array}{rclll} 
w_{B'}(C') & = & w_{B'}(S)+w_{B'}(C \cap S_B) & &
\\
& \leq & w_{C \cap S_A}(S)+w_B(C \cap S)+w_B(C \cap S_B) & & 
\\
& \leq & w_{C \cap S_A}(S)+w_B(C)-w_B(C \cap S_A) & &
\\
& \leq & |S||C \cap S_A|+w_B(C)-w_B(C \cap S_A) & & \text{since $C \cap S_A \neq \emptyset$} 
\\
& \leq & k|C \cap S_A|+w_B(C)-w_B(C \cap S_A) & & \text{since $|S| \leq k$} 
\\
& = & k|C \cap S_A|+w_B(C)-k |C\cap S_A| & & \text{by~(\ref{e:1})} 
\\
& = & w_B(C) & &
\\
& \leq & 2k^2-1. & &
\end{array} 
\end{displaymath} 
The algorithm now outputs $(A',B',C')$ and stops. 
\end{proof}

\begin{theorem} \label{theorem-alg} Let $k$ be a positive
 integer. There exists an algorithm whose input is a graph
 $G$, whose output is one of the following:
\begin{itemize} 
\item the true statement ``$G$ is $(k+1)$-connected''; 
\item a cut-partition $(A,B,C)$ of $G$ such that $G[A \cup C]$ is $(k+1)$-connected and $w_B(C) \leq 2k^2-1$; 
\item a vertex $v \in V(G)$ of degree at most $2k^2-1$; 
\end{itemize} 
\noindent 
and whose running time is $O(n^{k+3})$, where $n$ is the number of vertices of the input graph $G$. 
\end{theorem}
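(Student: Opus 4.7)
The strategy is to first dispose of low-degree vertices, then initialize a cut-partition whose $k$-weight is already under the bound, and finally iterate Lemma~\ref{lemma-alg} until it certifies $(k+1)$-connectivity of the ``small'' side.

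First I would scan all vertices of $G$ and compute their degrees in $O(n^2)$ time. If some vertex $v$ has degree at most $2k^2-1$, the algorithm outputs $v$ and halts. Otherwise $\delta(G) > 2k^2-1$, so the hypothesis of Lemma~\ref{lemma-alg} holds for every cut-partition considered below. Next, I run a standard $(k+1)$-connectivity test on $G$ in time $O(n^{k+2})$ (for instance, by iterating over all candidate cutsets of size at most $k$, or by a max-flow based algorithm). If $G$ is $(k+1)$-connected, the algorithm outputs this statement and stops. Otherwise, the same test produces a cutset $S \subseteq V(G)$ with $|S|\le k$ together with the components of $G\sm S$. Let $A_0$ be the vertex set of one such component, set $C_0=S$ and $B_0=V(G)\sm(A_0\cup C_0)$. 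Then $(A_0,B_0,C_0)$ is a cut-partition of $G$, and
\[
w_{B_0}(C_0)\ \le\ k\cdot|C_0|\ \le\ k^2\ \le\ 2k^2-1.
\]

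Now I iterate Lemma~\ref{lemma-alg} starting from $(A_0,B_0,C_0)$. At step $i$ the lemma either returns the certification ``$G[A_i\cup C_i]$ is $(k+1)$-connected,'' in which case the algorithm outputs the cut-partition $(A_i,B_i,C_i)$ and halts; or it returns a cut-partition $(A_{i+1},B_{i+1},C_{i+1})$ that is better than $(A_i,B_i,C_i)$ and still satisfies $w_{B_{i+1}}(C_{i+1})\le 2k^2-1$. Since ``better'' means $B_i\subsetneq B_{i+1}$ and $B_i\subseteq V(G)$, the loop performs at most $n$ iterations. Each iteration costs $O(n^{k+2})$ by Lemma~\ref{lemma-alg}, so together with the preprocessing the total running time is $O(n^{k+3})$, as required.

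The only real obstacle is setting up the initial cut-partition so that its weight is below $2k^2-1$; this is handled by the trivial estimate $w_B(C)\le k|C|$ applied to the $\le k$-cutset returned by the connectivity test. Correctness of the output then follows immediately from the guarantees of Lemma~\ref{lemma-alg}: the returned cut-partition always satisfies $w_B(C)\le 2k^2-1$ and, when the algorithm halts in the second outcome, $G[A\cup C]$ has been certified $(k+1)$-connected. Termination is forced by the strict monotonicity of $B$ along the iterations, which also bounds the number of calls to Lemma~\ref{lemma-alg} by $n$ and hence yields the announced $O(n^{k+3})$ bound.
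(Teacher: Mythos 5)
Your proposal is correct and follows essentially the same route as the paper: eliminate low-degree vertices, test $(k+1)$-connectivity by enumerating cutsets of size at most $k$, initialize a cut-partition with $w_B(C) \le k|C| \le k^2 \le 2k^2-1$, and iterate Lemma~\ref{lemma-alg}, with termination in at most $n$ rounds because the cut-partitions strictly improve, giving $O(n^{k+3})$ overall.
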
 

\begin{proof} 
Here is an algorithm. 

\noindent\textbf{Step 1:} We first check in $O(n^2)$ time whether $G$ contains
a vertex of degree at most $2k^2-1$; if so, we stop, and the algorithm
returns one such vertex. From now on, we assume that $\delta(G) >
2k^2-1$. By examining all subsets of $V(G)$ of size at most $k$, we
determine in $O(n^{k+2})$ time whether $G$ has any cutsets of size at
most $k$. If $G$ has no such cutsets, then using the fact that $|V(G)|
\geq \delta(G)+1 \geq 2k^2+1 \geq k+2$, we determine that $G$ is
$(k+1)$-connected, and we are done. So assume that we found a cutset
$C$ of $G$ such that $|C| \leq k$. We now find the components of $G
\smallsetminus C$ in $O(n^2)$ time, we let $A$ be the vertex-set of
some component of $G \smallsetminus C$, and we set $B = V(G)
\smallsetminus (A \cup C)$. Clearly, $(A,B,C)$ is a cut-partition of
$G$, and furthermore, $w_B(C) \leq k|C| \leq k^2 \leq 2k^2-1$. We now go to Step 2.

\noindent\textbf{Step 2:} We call the algorithm from Lemma \ref{lemma-alg} with
input $G$ and $(A,B,C)$. If we obtain the statement that $G[A \cup C]$ is 
$(k+1)$-connected, then we stop and return the triple $(A,B,C)$. Otherwise, we 
obtain a cut-partition $(A',B',C')$ of $G$ that is better than $(A,B,C)$ and 
satisfies $w_{B'}(C') \leq 2k^2-1$. In this case, we set $(A,B,C) = 
(A',B',C')$, and we go back to Step 2.

By definition, if $(A,B,C)$ and $(A',B',C')$ are cut-partitions of $G$ such that 
$(A',B',C')$ is better than $(A,B,C)$, then $|A' \cup C'| < |A \cup C|$. This 
implies that we go through Step 2 at most $O(n)$ times (and in particular, the 
algorithm terminates). Since the running time of the algorithm from Lemma
\ref{lemma-alg} is $O(n^{k+2})$, we deduce that the running time of
the algorithm that we just described is $O(n^{k+3})$.
\end{proof} 

\begin{corollary} \label{cor-main} Let $k$ be a positive integer. Then
 for every graph $G$, at least one of the following holds:
\begin{itemize} 
\item $G$ is $(k+1)$-connected; 
\item $G$ admits a cut-partition $(A,B,C)$ such that $G[A \cup C]$ is $(k+1)$-connected and $w_B(C) \leq 2k^2-1$;
\item $G$ contains a vertex of degree at most $2k^2-1$. 
\end{itemize} 
\end{corollary}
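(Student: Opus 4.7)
The plan is to derive this corollary from Lemma~\ref{lemma-alg} via an extremal argument (essentially the non-algorithmic content of Theorem~\ref{theorem-alg}, which already asserts an algorithm producing one of the three listed alternatives). First I would dispose of the trivial cases: if $G$ contains a vertex of degree at most $2k^2-1$, the third outcome holds; if $G$ is $(k+1)$-connected, the first outcome holds. So from now on, assume $\delta(G) > 2k^2-1$ and $G$ is not $(k+1)$-connected. Then $|V(G)| \geq \delta(G)+1 \geq 2k^2+1 \geq k+2$, so $G$ admits a cutset $C_0$ with $|C_0| \leq k$. Letting $A_0$ be the vertex set of any component of $G \smallsetminus C_0$ and $B_0 = V(G) \smallsetminus (A_0 \cup C_0)$, we obtain a cut-partition $(A_0,B_0,C_0)$ of $G$ with $w_{B_0}(C_0) \leq k|C_0| \leq k^2 \leq 2k^2-1$.

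Hence the family of cut-partitions $(A,B,C)$ of $G$ satisfying $w_B(C) \leq 2k^2-1$ is nonempty. Among them, I would choose $(A^\ast,B^\ast,C^\ast)$ minimizing $|A^\ast \cup C^\ast|$. I claim that $G[A^\ast \cup C^\ast]$ is $(k+1)$-connected, which establishes the second outcome. Indeed, apply Lemma~\ref{lemma-alg} with input $G$ and $(A^\ast,B^\ast,C^\ast)$: either it certifies $(k+1)$-connectivity of $G[A^\ast \cup C^\ast]$, and we are done, or it returns a cut-partition $(A',B',C')$ of $G$ that is \emph{better} than $(A^\ast,B^\ast,C^\ast)$, meaning $A' \cup C' \subsetneq A^\ast \cup C^\ast$, while still satisfying $w_{B'}(C') \leq 2k^2-1$. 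The latter contradicts the extremal choice of $(A^\ast,B^\ast,C^\ast)$.

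There is no real obstacle here: Lemma~\ref{lemma-alg} packages all the structural work, and the only input needed for the corollary is that the ``better than'' relation is well-founded on cut-partitions of $G$, which is immediate since the sets $A \cup C$ form a strictly descending chain in the finite power set of $V(G)$. The design choice worth highlighting is that the extremal principle is applied to $|A \cup C|$ subject to the \emph{weight} constraint $w_B(C) \leq 2k^2-1$, rather than to a raw size constraint $|C| \leq k$; this is exactly what makes the descent close, because Lemma~\ref{lemma-alg} may increase $|C|$ beyond $k$ during the process but is guaranteed never to violate the weight bound.
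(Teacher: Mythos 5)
Your proof is correct and is essentially the paper's argument: the paper deduces Corollary~\ref{cor-main} directly from Theorem~\ref{theorem-alg}, whose proof builds the same initial cut-partition and then iterates Lemma~\ref{lemma-alg} until the ``better'' descent terminates. Replacing that iteration by an extremal choice of $(A,B,C)$ minimizing $|A\cup C|$ subject to $w_B(C)\leq 2k^2-1$ is just a non-algorithmic rephrasing of the same descent, so the two proofs coincide in substance.
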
 
\begin{proof} 
This follows immediately from Theorem~\ref{theorem-alg}. 
\end{proof} 

Note that in the second outcome of Corollary \ref{cor-main}, we have
that $|C| \leq w_B(C) \leq 2k^2-1$. Thus, Theorem~\ref{th:conn} is an
immediate consequence of Corollary~\ref{cor-main}. As explained in
the Introduction, Theorem~\ref{th:sf} is equivalent to
Theorem~\ref{th:conn}. 

\medskip 

One may wonder whether the bound of $2k^2-1$ given in Corollary~\ref{cor-main} is best possible. While we are at this time not able to give a definitive answer to this question, we can show that, at least in the case when $k$ is a power of~$2$, the bound from Corollary~\ref{cor-main} is very close to being optimal (and for the case $k = 2$, the bound is indeed optimal). In particular, we have the following proposition.

\begin{proposition} \label{k-weight-power-of-2} Let $m$ and $d$ be positive integers, and let $k = 2^m$. Then there exists a graph $G$ such that $G$ is not $(k+1)$-connected, $\delta(G) \geq d$, and every cut-partition $(A,B,C)$ of $G$ such that $G[A \cup C]$ is $(k+1)$-connected satisfies $w_B^k(C) \geq 2k^2-\frac{k}{2}$. 
\end{proposition}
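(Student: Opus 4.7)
The plan is to construct $G$ by taking a ``three-clique'' base graph $G_0$---three copies $H_1, H_2, H_3$ of $K_N$ with $N \geq \max\{2k+1, d+1\}$, pairwise sharing disjoint $k$-vertex subsets $S_{12}, S_{13}, S_{23}$---and then attaching a gadget $P$ designed to introduce a $k$-cutset of $G$ while preserving the weight lower bound. A direct analysis of $G_0$ (using that each $V(H_i)$ is a clique and that $V(H_i) \setminus V(H_j)$ is anti-complete to $V(H_j) \setminus V(H_i)$) shows that every $(k+1)$-connected induced subgraph $H$ of $G_0$ with $V(H) \subsetneq V(G_0)$ satisfies $V(H) = V(H_i)$ for some $i$: if $V(H)$ contains vertices from two cliques, then $V(H_i) \cap V(H_j) = S_{ij}$ of size $k$ is a cutset of $G_0[V(H)]$, contradicting $(k+1)$-connectedness. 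For each such $H = H_i$, the associated cut-partition has $C = S_{ij} \cup S_{ik}$ of size $2k$, and each vertex $c \in C$ has $N-k \geq k+1$ neighbors in the ``opposite'' clique inside $B$, so $c$ is $k$-strong, giving $w_B(C) = 2k \cdot k = 2k^2$. Also $\delta(G_0) = N-1 \geq d$.

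Since $G_0$ is itself $(k+1)$-connected, I attach a gadget $P$ to $G_0$ along a specific $k$-subset $T$ of $V(G_0)$ (for concreteness, $T \subseteq V(H_3) \setminus (S_{13} \cup S_{23})$, non-empty provided $N > 2k$), chosen so that $T$ is a cutset of $G = G_0 \cup P$ of size $k$ (hence $G$ is not $(k+1)$-connected), yet $G[V(P) \cup T]$ is not $(k+1)$-connected, so the obvious cut-partition $(V(P), V(G) \setminus V(P) \setminus T, T)$ is invalid. The construction of $P$ uses the hypothesis $k = 2^m$ through a recursive binary halving: at each level, $P$ splits into two halves $P', P''$ attached along disjoint $(k/2)$-subsets $T', T''$ of $T$, each half $P^{\star}$ built recursively for the parameter $k' = 2^{m-1}$, with the base case $m=1$ given by an explicit small gadget (a close analogue of the one underlying the proof of $\psi_c(2)=5$). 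The halving guarantees that $T'$ (or $T''$) is an internal cut of size $k/2$ of $G[V(P) \cup T]$ separating $P'$ from $P''$, precluding $(k+1)$-connectedness; meanwhile, each recursive leaf can be thickened into an arbitrarily large clique to ensure $\delta(G) \geq d$.

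The main obstacle is verifying that every valid cut-partition $(A,B,C)$ of $G$ satisfies $w_B(C) \geq 2k^2 - k/2$. The analysis splits by the location of $A \cup C$: cut-partitions with $A \cup C \subseteq V(G_0)$ inherit the bound $w_B(C) \geq 2k^2$ directly from the base analysis of $G_0$; cut-partitions with $A$ entirely inside $V(P)$ are handled by induction on $m$, using the recursive structure of $P$ to show that any $(k+1)$-connected piece inside $V(P)$ requires a boundary of at least $2k$ vertices, most of them $k$-strong in $B$; and mixed cut-partitions (where $A$ meets both $V(G_0)$ and $V(P)$) are controlled by tracking the interaction through $T$. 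The ``$k/2$'' slack in $2k^2 - k/2$ arises at the outermost recursion level: in the worst-case valid cut-partition, $A$ consists of one half $P'$ together with a suitable part of $V(G_0)$, and the $k/2$ vertices of $T' \subseteq C$ see their external neighbors only inside the opposite half $P''$ of size $k/2$, so each such vertex is $k$-weak with weight $k/2$ rather than $k$, degrading the total weight from $2k^2$ to $2k^2 - k/2$. The inductive argument tracks these ``halving losses'' and shows that no further degradation accumulates below the outermost level.
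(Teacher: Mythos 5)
There is a fatal gap at the very first step of your construction, before any of the recursive gadgetry. You take a base graph $G_0$ that is itself $(k+1)$-connected and attach the gadget $P$ to it along a $k$-set $T$, with all edges between $P$ and $G_0$ going through $T$. But then $(A,B,C)=(V(G_0)\smallsetminus T,\ V(P)\smallsetminus T,\ T)$ is a cut-partition of $G$ in which $G[A\cup C]=G_0$ is $(k+1)$-connected, while $w_B^k(C)\leq k|T|=k^2<2k^2-\tfrac{k}{2}$. So the graph you describe violates the very property you are trying to establish: the hypothesis of the proposition quantifies over \emph{all} cut-partitions whose $A\cup C$ side is $(k+1)$-connected, including the one that isolates the whole base graph, not only those isolating ``small'' pieces. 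Making $T$ a $k$-cutset (to destroy $(k+1)$-connectedness of $G$) is exactly what creates this cheap cut-partition; enlarging $T$ to $2k$ vertices to make it expensive would in turn destroy your reason for $G$ not being $(k+1)$-connected. The paper's construction avoids this tension in a structurally different way: it never contains a large $(k+1)$-connected piece at all. Every graph in the construction is glued along sets of size exactly $k$, and the attached low-degree vertices $v_1,\dots,v_k$ have degree $k$ inside each copy of $H$, so they cannot lie in any $(k+1)$-connected induced subgraph; consequently the only sets $S$ with $G[S]$ $(k+1)$-connected and $\partial(S)\subsetneq S$ are the designated cliques $A_j$, whose frontier consists of $2k-1$ $k$-strong vertices plus $k/2$ weight-one vertices, giving $k(2k-1)+\tfrac{k}{2}=2k^2-\tfrac{k}{2}$. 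Non-$(k+1)$-connectedness of $G$ and $\delta(G)\geq d$ then come for free from gluing $d$ copies along $\{v_1,\dots,v_k\}$.

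Two further problems would remain even if the attachment issue were fixed. First, your classification of the proper $(k+1)$-connected induced subgraphs of $G_0$ is incorrect: since $S_{13}$ and $S_{23}$ both lie in the clique $H_3$, they are complete to each other, so for example $G_0[V(H_1)\cup V(H_2)]$ has no cutset of size $\leq k$ (any cutset must contain all of $S_{12}$ and all of $S_{13}$ or $S_{23}$) and is a proper $(k+1)$-connected induced subgraph different from every $H_i$; your argument that ``$S_{ij}$ is a cutset of $G_0[V(H)]$'' ignores these through-the-third-clique adjacencies. Second, the core of the proof---the recursive construction of $P$, the induction over $m$ handling pieces inside $P$, the ``mixed'' cut-partitions meeting both $V(G_0)$ and $V(P)$, and the bookkeeping showing that the claimed $k/2$ loss occurs only once and that $k/2$ vertices of weight at most $k/2$ each can still be compensated to reach $2k^2-\tfrac{k}{2}$---is asserted rather than proved, and as sketched the arithmetic does not visibly add up. This verification is precisely the hard part of the proposition, and it is the part the paper's notion of an $i$-usable cover (in particular its property (d)) is designed to carry through the induction.
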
 
\begin{proof} 
We may assume that $d \geq 3k$. Given $i \in \{0,\dots,m+1\}$, an {\em $i$-usable cover} of a graph $G$ is an ordered $2^i$-tuple $(A_1,\dots,A_{2^i})$ of cliques of $G$ such that all the following are satisfied: 
\begin{itemize} 
\item[(a)] $V(G) = \bigcup_{j=1}^{2^i} A_j$; 
\item[(b)] for all $j \in \{1,\dots,2^i\}$, $|A_j| = d+1$, $|\partial_G(A_j)| = 2k-2^{m-i+1}$ (and consequently, $|A_j \smallsetminus \partial_G(A_j)| > k+2^{m-i}$), and every vertex in $\partial_G(A_j)$ is $k$-strong with respect to $V(G) \smallsetminus A_j$; 
\item[(c)] the sets $A_1 \smallsetminus \partial_G(A_1),\dots,A_{2^i} \smallsetminus \partial_G(A_{2^i})$ are pairwise disjoint and anti-complete to each other; 
\item[(d)] for all non-empty sets $S \subseteq V(G)$ such that $G[S]$ is $(k+1)$-connected and $\partial_G(S) \subsetneq S$, there exists an index $j \in \{1,\dots,2^i\}$ such that $S = A_j$. 
\end{itemize} 
A graph is {\em $i$-usable} if it admits an $i$-usable cover. Our goal is to show that for all $i \in \{0,\dots,m+1\}$, there exists an $i$-usable graph. Clearly, any complete graph on $d+1$ vertices is $0$-usable. Now, fix some $i \in \{0,\dots,m\}$, and suppose that $G_i$ and $G_i'$ are $i$-usable graphs with $i$-usable covers $(A_1,\dots,A_{2^i})$ and $(A_{2^i+1},\dots,A_{2^{i+1}})$, respectively. In view of (b) and (c), we may assume that for all distinct $j_1,j_2 \in \{1,\dots,2^i\}$, $A_{j_1} \cap A_{2^i+j_2} = \emptyset$, and that for all $j \in \{1,\dots,2^i\}$, $A_j \cap A_{2^i+j}$ is of size $2^{m-i}$ and intersects neither $\partial_{G_i}(A_j)$ nor $\partial_{G_i'}(A_{2^i+j})$. Using part (c), we see that $|\bigcup_{j=1}^{2^i} (A_j \cap A_{2^i+j})| = k$ and $G_i[\bigcup_{j=1}^{2^i} (A_j \cap A_{2^i+j})] = G_i'[\bigcup_{j=1}^{2^i} (A_j \cap A_{2^i+j})]$. Let $G_{i+1}$ be the graph obtained by gluing $G_i$ and $G_i'$ along the set $\bigcup_{j=1}^{2^i} (A_j \cap A_{2^i+j})$. Let us verify that $(A_1,\dots,A_{2^{i+1}})$ is an $(i+1)$-usable cover of $G_{i+1}$. Part (a) is immediate. Next, note that for all $j \in \{1,\dots,2^i\}$, $\partial_{G_{i+1}}(A_j) = \partial_{G_i}(A_j) \cup (A_j \cap A_{2^i+j})$ and $\partial_{G_{i+1}}(A_{2^i+j}) = \partial_{G_i'}(A_{2^i+j}) \cup (A_j \cap A_{2^i+j})$, and furthermore, every vertex in $A_j \cap A_{2^i+j}$ is complete to both $A_j \smallsetminus \partial_{G_{i+1}}(A_j)$ and $A_{2^i+j} \smallsetminus \partial_{G_{i+1}}(A_{2^i+j})$ (we use the fact that $A_j$ and $A_{2^i+j}$ are cliques). Parts (b) and (c) now follow from the induction hypothesis. Finally, since $G_{i+1}$ is obtained by gluing $G_i$ and $G_i'$ along $k$ vertices, any $(k+1)$-connected induced subgraph of $G_{i+1}$ is in fact an induced subgraph of $G_i$ or $G_i'$, and so (d) follows from the induction hypothesis. This completes the induction. 

\begin{figure}
\begin{center}
\includegraphics[scale=0.3]{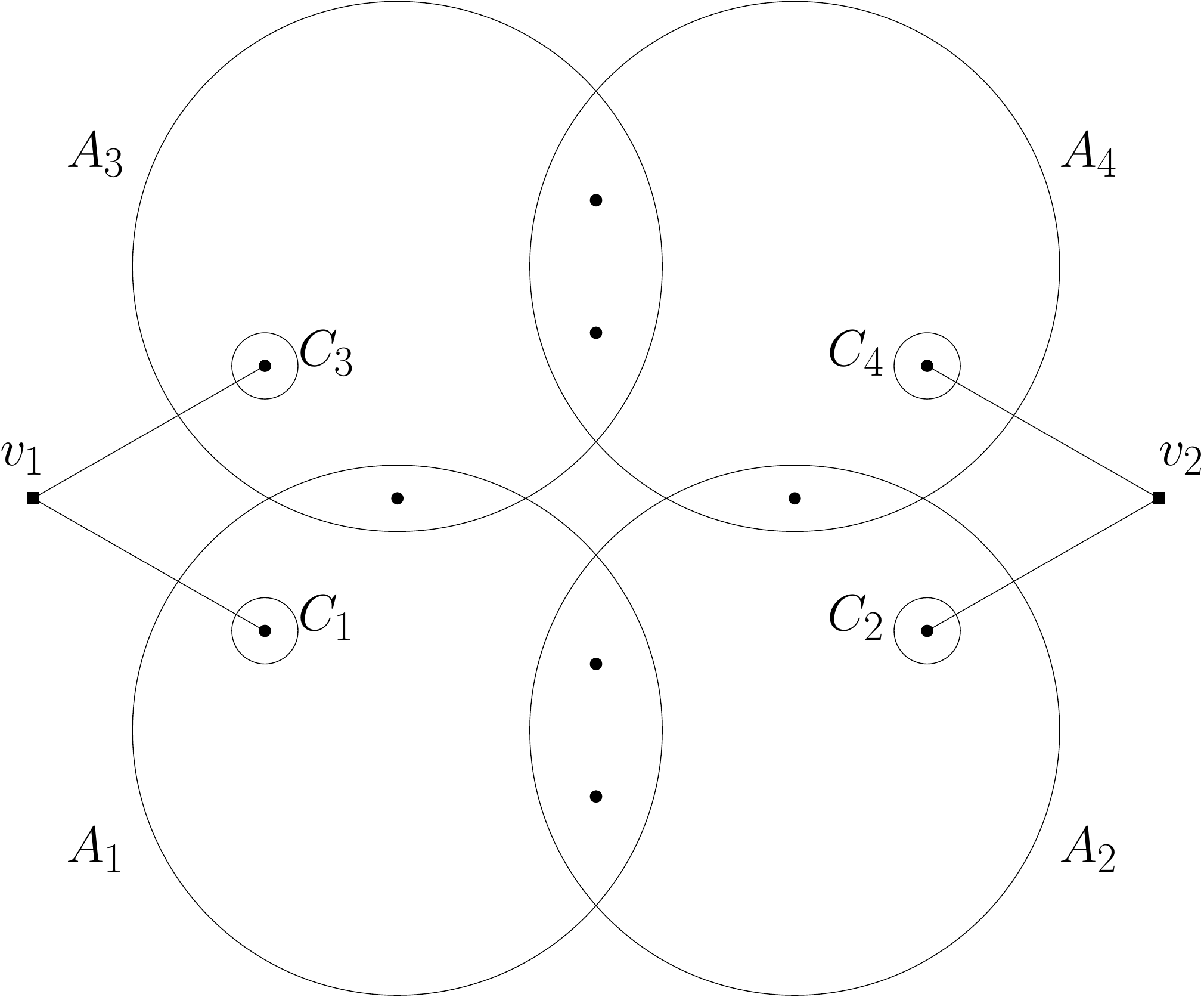} 
\end{center}
\caption{Graph $H$ for the case $k=2$}\label{fig:H-k=2}
\end{figure}

Now, fix an $(m+1)$-usable graph $H_0$, and let $(A_1,\dots,A_{2k})$ be an $(m+1)$-usable cover of $H_0$ (we use the fact that $2^{m+1} = 2k$). Since $V(H_0) = \bigcup_{j=1}^{2k} A_j$, and since $A_1,\dots,A_{2k}$ are cliques of size $d+1$, we know that $\delta(H_0) \geq d$. Next, using (b), for each $j \in \{1,\dots,2k\}$, we choose a set $C_j \subseteq A_j \smallsetminus \partial_{H_0}(A_j)$ of size $\frac{k}{2}$. By (c) then, the sets $C_1,\dots,C_{2k}$ are pairwise disjoint. Let $v_1,\dots,v_k$ be pairwise distinct vertices that do not belong to $V(H_0)$. Let $H$ be the graph with vertex-set $V(H_0) \cup \{v_1,\dots,v_k\}$ such that $H[V(H_0)] = H_0$, and such that for all $j \in \{1,\dots,k\}$, $N_H(v_j) = C_j \cup C_{k+j}$ (see Figure~\ref{fig:H-k=2}). Let us now verify that the graph $H$ satisfies the following two statements: 
\begin{itemize} 
\item[(i)] for all $j \in \{1,\dots,2k\}$, ${\rm deg}_H(v_j) = k$, and for all $v \in V(H_0)$, ${\rm deg}_H(v) \geq d$; 
\item[(ii)] for all non-empty sets $S \subseteq V(H)$, if $H[S]$ is $(k+1)$-connected and $\partial_H(S) \subsetneq S$, then $w_{V(H) \smallsetminus S}^k(\partial_H(S)) \geq 2k^2-\frac{k}{2}$. 
\end{itemize} 
By construction, for all $j \in \{1,\dots,2k\}$, ${\rm deg}_H(v_j) = |C_j \cup C_{k+j}| = k$, and we already saw that $\delta(H_0) \geq d$. This proves (i). For (ii), fix a non-empty set $S \subseteq V(H)$ such that $H[S]$ is $(k+1)$-connected and $\partial_H(S) \subsetneq S$. Since $H[S]$ is $(k+1)$-connected, we know that $\delta(H[S]) \geq k+1$, and so by (i), $v_1,\dots,v_k \notin S$. Thus, $S \subseteq V(H_0)$, and it follows that $H_0[S]$ is $(k+1)$-connected and that $\partial_{H_0}(S) \subseteq \partial_H(S) \subsetneq S$. Since $(A_1,\dots,A_{2k})$ is an $i$-usable cover of $H_0$, it follows that $S$ is equal to one of $A_1,\dots,A_{2k}$; by symmetry, we may assume that $S = A_1$, and so we just need to prove that $w_{V(H) \smallsetminus A_1}^k(\partial_H(A_1)) \geq 2k^2-\frac{k}{2}$. By construction, $\partial_H(A_1) = \partial_{H_0}(A_1) \cup C_1$. From (b), we know that $|\partial_{H_0}(A_1)| = 2k-1$, and that every vertex in $\partial_{H_0}(A_1)$ is $k$-strong with respect to $V(H_0) \smallsetminus A_1$ in $H_0$, and therefore, with respect to $V(H) \smallsetminus A_1$ in $H$ as well. Thus, $$w_{V(H) \smallsetminus A_1}^k(\partial_H(A_1)) \, \geq \, k|\partial_{H_0}(A_1)|+|C_1| \, = \, k(2k-1)+\frac{k}{2} \, = \, 2k^2-\frac{k}{2}.$$ This proves (ii). 

Finally, let $G$ be the graph obtained by gluing $d$ copies of $H$ along the set $\{v_1,\dots,v_k\}$. The fact that $\delta(G) \geq d$ follows from (i). Now, fix a cut-partition $(A,B,C)$ of $G$ such that $G[A \cup C]$ is $(k+1)$-connected. Then $G[A \cup C]$ is in fact an induced subgraph of one of the $d$ copies of $H$ used to construct $G$, and furthermore, $\partial_H(A \cup C) \subseteq \partial_G(A \cup C) \subseteq C \subsetneq A \cup C$. The fact that $w_B^k(C) \geq 2k^2-\frac{k}{2}$ now follows from (ii). 
\end{proof}

Note that in the case $k = 2$, Proposition~\ref{k-weight-power-of-2} implies that the bound of $2k^2-1 = 7$ from Corollary~\ref{cor-main} is best possible. This stands in contrast to the fact that $\psi_c(2) = 5$ (see Theorem~\ref{th:bounds}). So far, we have seen that Corollary~\ref{cor-main} implies Theorems~\ref{th:sf} and~\ref{th:conn}. Using the fact that $\psi_c(2) = 5$, we can obtain a better bound in Theorems~\ref{th:sf} and~\ref{th:conn} for the case $k = 2$. However, Corollary~\ref{cor-main} is still useful for the case $k = 2$ because in Section~\ref{sec:col}, we use it to prove Theorem~\ref{th:col}. Using the fact that $\psi_c(2) = 5$ and an argument analogous to the proof of Proposition~\ref{prop:coloring}, one can show that if $\chi(G) > c+5$, then $G$ contains a $3$-connected induced subgraph of chromatic number greater than $c$. However, for the case $k = 2$, Theorem~\ref{th:col} (whose proof is based on Corollary~\ref{cor-main}) states that the condition $\chi(G) > \max\{c+2,8\}$ is sufficient, and this is clearly better than the condition $\chi(G) > c+5$ as soon as $c \geq 4$.

\section{Proof of Theorem~\ref{th:bounds}} \label{sec:bounds}

In this section, we prove Theorem~\ref{th:bounds}, restated below for the reader's convenience. 

\begin{th:bounds} For all positive integers $k$, all the following hold: 
\begin{itemize} 
\item[(1)] $2k-1 \leq \varphi(k) \leq \psi(k) \leq \psi_c(k) \leq 2k^2-1$; 
\item[(2)] $k^2+k-1 \leq \psi(k) \leq \psi_c(k) \leq 2k^2-1$; 
\item[(3)] $\frac{1}{4}k\log_2k < \varphi(k) \leq 2k^2-1$. 
\end{itemize}
Furthermore, 
\begin{itemize} 
\item[(4)] $\varphi(2) = \psi(2) = \psi_c(2) = 5$. 
\end{itemize} 
\end{th:bounds} 

\medskip 

We begin with a proposition, which is a slightly weaker version of part (1) of Theorem~\ref{th:bounds} (the lower bound of $2k-1$ is replaced by $k$).

\begin{proposition} \label{prop:bounds} For all positive integers $k$, $k \leq \varphi(k) \leq \psi(k) \leq \psi_c(k) \leq 2k^2-1$. 
\end{proposition}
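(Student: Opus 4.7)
The plan is to establish the chain of inequalities left to right, then the lower bound separately. The three upper-bound inequalities follow almost immediately from results already in hand (Corollary~\ref{cor-main}, Lemma~\ref{l:inG}, and the proof technique of Theorem~\ref{th:phi}), while the lower bound $k \le \varphi(k)$ will be witnessed by an explicit graph in the $k$-closure of the class of complete graphs.

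First I would dispatch $\psi_c(k) \le 2k^2-1$ by directly quoting Corollary~\ref{cor-main}: its three outcomes match the definition of $\psi_c$ with the bound $2k^2-1$. Next, for $\psi(k) \le \psi_c(k)$, I fix a hereditary class $\mathcal{G}$, a graph $G \in \mathcal{G}^k$, and apply the $\psi_c$-version to $G$. In the first outcome $G$ is $(k+1)$-connected, and since $G \in \mathcal{G}^k$, Lemma~\ref{l:inG} puts $G \in \mathcal{G}$. In the second outcome the $(k+1)$-connected induced subgraph $G[A \cup C]$ likewise lies in $\mathcal{G}^k$ (which is hereditary) and is $(k+1)$-connected, so Lemma~\ref{l:inG} again gives $G[A \cup C] \in \mathcal{G}$; the third outcome transfers verbatim. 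So $\psi(k) \le \psi_c(k)$.

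For $\varphi(k) \le \psi(k)$ I essentially rerun the proof of Theorem~\ref{th:phi}, but parameterized by $\psi(k)$ instead of $2k^2-1$. Given $G \in \mathcal{G}^k$ not in $\mathcal{G}$, apply the $\psi$-version. The first outcome is excluded; in the second outcome $G[A] \in \mathcal{G}$ because $\mathcal{G}$ is hereditary and $G[A]$ is an induced subgraph of $G[A \cup C] \in \mathcal{G}$, and $|C| \le \psi(k)$; in the third outcome pick a vertex $v$ of degree at most $\psi(k)$, and if $v$ has a non-neighbor take $A = \{v\}$, $C = N(v)$, $B = V(G) \smallsetminus N[v]$ so that $G[A]$ is a one-vertex graph, which is in $\mathcal{G}$ since $\mathcal{G}^k$ is non-empty and hereditary; if $v$ has no non-neighbor then $G$ is complete and Lemma~\ref{l:inG} puts $G$ itself in $\mathcal{G}$, contradicting the setup.

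The one genuinely new piece is the lower bound $k \le \varphi(k)$, which I would obtain by exhibiting a bad instance. Let $\mathcal{G}$ be the hereditary class of all complete graphs, and let $G$ be two copies of $K_{k+1}$ glued along a common clique of size $k$; so $|V(G)| = k+2$, $G$ has exactly two non-adjacent vertices $u$ and $v$, and $G \in \mathcal{G}^k \setminus \mathcal{G}$. For any cut-partition $(A,B,C)$ of $G$ with $G[A] \in \mathcal{G}$, the set $A$ is a clique, hence cannot contain both $u$ and $v$; up to symmetry $u \in A$ and $v \in B$. Since $A$ is anti-complete to $B$, every vertex of $A \setminus \{u\}$ is a non-neighbor of $v$, but $u$ is the only non-neighbor of $v$ in $G$, so $A = \{u\}$ and $C \supseteq N(v) \cap N(u)$, which contains the $k$ glue vertices. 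Thus $|C| \ge k$, which gives $\varphi(k) \ge k$ by the minimality in the definition of $\varphi$. The main (minor) obstacle is just choosing this witness carefully and checking that every admissible choice of $A$ forces $|C| \ge k$; everything else is bookkeeping from earlier results.
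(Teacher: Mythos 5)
Your proposal is correct and follows essentially the same route as the paper: the three upper-bound inequalities are obtained from Theorem~\ref{th:conn}/Corollary~\ref{cor-main} together with Lemma~\ref{l:inG} and the degree/complete-graph case analysis of Theorem~\ref{th:phi}, and the lower bound comes from gluing two complete graphs along a clique of size $k$ inside the $k$-closure of the class of complete graphs. The only (immaterial) difference is that you glue two copies of $K_{k+1}$ where the paper glues two copies of $K_{2k}$, and your justification that the one-vertex graph $G[A]$ lies in $\mathcal{G}$ should cite heredity and non-emptiness of $\mathcal{G}$ (or Lemma~\ref{l:inG}) rather than of $\mathcal{G}^k$.
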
 
\begin{proof} 
Fix a positive integer $k$. The inequality $\psi_c(k) \leq 2k^2-1$ follows from Theorem~\ref{th:conn}. The fact that $k \leq \varphi(k)$ is a simple exercise: consider, for instance, the graph $H$ obtained by gluing two complete graphs, each of size $2k$, along a clique of size $k$; the graph $H$ is not complete, it belongs to the $k$-closure of the class of all complete graphs, and for every cut-partition $(A,B,C)$ of $H$ such that $H[A]$ is a complete graph, we have that $|C| \geq k$. For the remaining two inequalities, we fix an arbitrary hereditary class $\mathcal{G}$ and an arbitrary graph $G$ in $\mathcal{G}^k$. 

To show that $\psi(k) \leq \psi_c(k)$, we must show that either $G \in \mathcal{G}$, or $G$ admits a cut-partition $(A,B,C)$ such that $G[A \cup C] \in \mathcal{G}$ and $|C| \leq \psi_c(k)$, or $G$ has a vertex of degree at most $\psi_c(k)$. We may assume that $G$ is not $(k+1)$-connected, for otherwise, Lemma~\ref{l:inG} guarantees that $G \in \mathcal{G}$, and we are done. Likewise, we may assume that $\delta(G) > \psi_c(k)$. Then by the definition of $\psi_c$, there exists a cut-partition $(A,B,C)$ of $G$ such that $G[A \cup C]$ is $(k+1)$-connected and $|C| \leq \psi_c(k)$. Since $G[A \cup C]$ is $(k+1)$-connected, Lemma~\ref{l:inG} implies that $G[A \cup C] \in \mathcal{G}$. This proves that $\psi(k) \leq \psi_c(k)$. 

It remains to show that $\varphi(k) \leq \psi(k)$. For this, we must prove that either $G \in \mathcal{G}$, or $G$ admits a cut-partition $(A,B,C)$ such that $G[A] \in \mathcal{G}$ and $|C| \leq \psi(k)$. We may assume that $G \notin \mathcal{G}$, for otherwise we are done. By the definition of $\psi$ then, we know that $G$ either admits a cut-partition $(A,B,C)$ such that $G[A \cup C] \in \mathcal{G}$ and $|C| \leq \psi(k)$, or contains a vertex of degree at most $\psi(k)$. In the former case, we use the fact that $\mathcal{G}$ is hereditary to deduce that $G[A] \in \mathcal{G}$, and we are done. So assume that $G$ contains a vertex of degree at most $\psi(k)$. Fix a vertex $v$ of $G$ of degree $\delta(G) \leq \psi(k)$. If $v$ has a non-neighbor in $G$, then we set $A = \{v\}$, $B = V(G) \smallsetminus N[v]$, and $C = N(v)$, and we are done. If $v$ has no non-neighbors in $G$, then the fact that ${\rm deg}(v) = \delta(G)$ implies that $G$ is a complete graph, and so Lemma~\ref{l:inG} guarantees that $G \in \mathcal{G}$, and again we are done. 
\end{proof} 

\subsection*{Proof of parts (1) and (2): a linear lower bound for $\varphi$ and a quadratic lower bound for $\psi$} 

In this subsection, we prove parts (1) and (2) of Theorem~\ref{th:bounds}. In view of part Proposition~\ref{prop:bounds}, it suffices to show that for all integers $k \geq 2$, we have that $\varphi(k) \geq 2k-1$ and $\psi(k) \geq k^2+k-1$. We begin with a technical lemma. 

\begin{lemma} \label{l:addk} Let $k$ be a positive integer, let $\cal G$ be a hereditary class that contains all complete graphs of size at most $k+1$, and let $G$ be a graph on at least two vertices. If $G$ contains a vertex $v$ of degree at most $k$ such that $G \smallsetminus v \in \mathcal{G}^k$, then $G \in \mathcal{G}^k$. In other words, if $G$ can be obtained by adding a vertex of degree at most $k$ to a graph that belongs to $\mathcal{G}^k$, then $G$ belongs to $\mathcal{G}^k$. 
\end{lemma}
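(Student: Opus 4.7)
The plan is to exhibit $G$ as a gluing of two graphs, each in $\mathcal{G}^k$, along the set $N_G(v)$. Set $H = G \sm v$ and $N = N_G(v)$, so by hypothesis $H \in \mathcal{G}^k$ and $|N| \leq k$, and let $G_2 = G[N \cup \{v\}]$. Then $V(H) \cap V(G_2) = N$ has at most $k$ vertices, $H[N] = G[N] = G_2[N]$, and every edge of $G$ either avoids $v$ (in which case it lies in $H$) or is incident to $v$ (in which case it lies in $G_2$); hence $G$ is obtained by gluing $H$ and $G_2$ along at most $k$ vertices. Therefore, once we know $G_2 \in \mathcal{G}^k$, we may conclude $G \in \mathcal{G}^k$.

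The remaining task is the auxiliary claim that every graph $L$ with $|V(L)| \leq k+1$ belongs to $\mathcal{G}^k$; this handles $G_2$, since $|V(G_2)| \leq |N| + 1 \leq k+1$. I would prove the claim by induction on $|V(L)|$. If $L$ is a complete graph then $L \in \mathcal{G} \subseteq \mathcal{G}^k$ by hypothesis; this also covers the base case $|V(L)| = 1$. Otherwise, pick non-adjacent vertices $u, w \in V(L)$ and consider the partition $(A,B,C) = (\{u\}, \{w\}, V(L) \sm \{u, w\})$, which is a cut-partition of $L$ with $|C| \leq |V(L)| - 2 \leq k - 1$. Both $L \sm u$ and $L \sm w$ have strictly fewer vertices than $L$, so by the induction hypothesis both lie in $\mathcal{G}^k$, and $L$ is the gluing of $L \sm w$ and $L \sm u$ along $C$. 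Hence $L \in \mathcal{G}^k$, completing the induction.

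The only mildly delicate point, and what one should guard against, is that $G_2 = G[N \cup \{v\}]$ is not in general a complete graph and therefore need not lie in $\mathcal{G}$ directly; its membership in $\mathcal{G}^k$ must be produced by recursively decomposing it along trivial cutsets of size at most $|V(G_2)| - 2 \leq k - 1$ down to complete pieces on at most $k+1$ vertices, which are precisely the pieces the hypothesis guarantees to be basic.
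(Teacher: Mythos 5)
Your proposal is correct and follows essentially the same route as the paper: first show (by induction, decomposing a non-complete graph on at most $k+1$ vertices along the trivial cutset avoiding two non-adjacent vertices) that every graph on at most $k+1$ vertices lies in $\mathcal{G}^k$, then glue $G \smallsetminus v$ with $G[N_G[v]]$ along $N_G(v)$. The only difference is that you spell out the small-graph induction a bit more explicitly than the paper does.
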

\begin{proof}
Note first that every graph on at most $k+1$ vertices is either complete (and therefore belongs to $\mathcal{G}$) or admits a cutset of size at most $k-1$ (and can therefore be obtained by gluing two smaller graphs along at most $k-1$ vertices). Thus, an easy induction on the number of vertices implies that every graph on at most $k+1$ vertices belongs to $\mathcal{G}^k$. 

To prove the lemma, suppose that a vertex $v \in V(G)$ satisfies ${\rm deg}_G(v) \leq k$ and $G \smallsetminus v \in \mathcal{G}^k$. Then $|N_G[v]| \leq k+1$, and so by what we just showed, $G[N_G[v]] \in \mathcal{G}^k$. Since $G$ is obtained by gluing $G \smallsetminus v$ and $G[N_G[v]]$ along the set $N_G[v]$ of size at most $k$, it follows that $G \in \mathcal{G}^k$. 
\end{proof}

For the remainder of this subsection, $k \geq 2$ is a fixed integer, and $\cal G$ is the class of all complete graphs. In order to prove that $\varphi(k) \geq 2k-1$ and $\psi(k) \geq k^2+k-1$, it suffices to construct a graph $G$ in $\mathcal{G}^k$ that has the following four properties: 
\begin{itemize} 
\item[(i)] $G$ is not a complete graph (and therefore $G \notin \mathcal{G}$); 
\item[(ii)] $\delta(G) \geq k^2+k-1$; 
\item[(iii)] $G$ does not admit a cut-partition $(A,B,C)$ such that $G[A \cup C] \in \mathcal{G}$; 
\item[(iv)] for every cut-partition $(A,B,C)$ of $G$ such that $G[A] \in \mathcal{G}$, we have that $|C| \geq 2k-1$. 
\end{itemize} 
Once we have constructed such a graph $G$, (i)-(iii) will imply that $\psi(k) \geq k^2+k-1$, and (i) and (iv) will imply that $\varphi(k) \geq 2k-1$. 

Our construction is as follows. Let $A_1, \dots, A_{k+1}$ be pairwise disjoint sets such that $|A_1| = \dots = |A_k| = k^2+k-1$ and $|A_{k+1}| 
= k^2+k$. For all $j \in \{1,\dots,k+1\}$, let $A_{j,1},\dots,A_{j,k}$ be pairwise disjoint subsets of $A_j$, each of size $k$. For all $j \in \{1,..,k+1\}$, let $C_j = A_j \smallsetminus (\bigcup_{l=1}^k A_{j,l})$. Note that $|C_j| = k-1$ for all $j \in \{1,\dots,k\}$, but $|C_{k+1}| = k$. For all $j \in \{1,\dots,k+1\}$, we set $C_j = \{c_j^1,\dots,c_j^{j-1},c_j^{j+1},\dots,c_j^k\}$. Let $v_1,\dots,v_k,u_1,\dots,u_k$ be pairwise distinct vertices, none of which belongs to $\bigcup_{j=1}^{k+1} A_j$, and set $S = \{u_1,\dots,u_k\}$. Let $H$ be the graph with vertex-set $(\bigcup_{j=1}^{k+1} A_j) \cup \{v_1,\dots,v_k\} \cup \{u_1,\dots,u_k\}$ and adjacency as follows:
\begin{itemize} 
\item $A_1,\dots,A_{k+1}$ are cliques, pairwise anti-complete to each other; 
\item for all $l \in \{1,\dots,k\}$, $N(v_l) = \bigcup_{j=1}^{k+1} A_{j,l}$; 
\item for all $j \in \{1,\dots,k\}$, $N(u_j) = \{c_1^j,\dots,c_{j-1}^j,c_{j+1}^j,\dots,c_{k+1}^j\}$. 
\end{itemize} 
Note that this means that for all $j \in \{1,\dots,k+1\}$, $N_H(A_j) = \{v_1,\dots,v_k\} \cup \{u_1,\dots,u_{j-1},u_{j+1},\dots,u_k\}$, and so $|N_H(A_j)| \geq 2k-1$. 

Let us check that $H \in {\cal G}^k$. First, for all $j\in \{1,\dots,k+1\}$, $H[A_j]$ is a complete graph, so clearly $H[A_j] \in {\cal G}^k$. By applying Lemma~\ref{l:addk} $k$ times, we see that the $H[A_j \cup \{v_1, \dots v_k\}]$'s are all in ${\cal G}^k$, and the graph $H' = H[A_1 \cup \dots \cup A_{k+1} \cup \{v_1, \dots, v_k\}]$ is obtained by gluing these graphs along $\{v_1, \dots, v_k\}$. Consequently $H' \in {\cal G}^k$. Now, we see that $H$ is obtained from $H'$ by successively adding degree $k$ vertices $u_1, \dots, u_k$, and so by Lemma~\ref{l:addk}, $H \in {\cal G}^k$. Let us now verify that $H$ satisfies the following two statements: 
\begin{itemize} 
\item[(a)] $H$ contains no simplicial vertices (a vertex is {\em simplicial} if its neighborhood is a clique), every vertex in $V(H) \smallsetminus S$ is of degree at least $k^2+k-1$, and $S$ is stable set, each of whose vertices is of degree $k$; 
\item[(b)] every clique $K$ of $H$ of size at least $k+2$ satisfies $|N_H(K)| \geq 2k-1$. 
\end{itemize} 
Statement (a) is immediate from the construction of $H$ and the fact that $k \geq 2$. For (b), fix a clique $K$ of $H$ of size at least $k+2$. By construction, none of $v_1,\dots,v_k,u_1,\dots,u_k$ belongs to a clique of size greater than $k+1$, and the cliques $A_1,\dots,A_{k+1}$ are anti-complete to each other. Thus, $K \subseteq A_j$ for some $j \in \{1,\dots,k+1\}$. Since $A_j$ is a clique, and since no vertex in $A_j$ has more than one neighbor in $V(H) \smallsetminus A_j$, we deduce that $|N_H(K)| \geq |N_H(A_j)| \geq 2k-1$, and (b) follows. 

Now, we build $G$ by taking $k+1$ copies of $H$ and gluing them along $S$ (so $G\in {\cal G}^k$). Since $H$ contains no simplicial vertices, neither does $G$. Furthermore, by construction, $\delta(G) \geq k^2 + k -1$, and so (ii) holds. Clearly, $G$ is not a complete graph, and so (i) holds. Furthermore, $G$ contains no cut-partition $(A,B,C)$ such that $G[A \cup C] \in \mathcal{G}$, for otherwise, $A \cup C$ would be a clique, and any vertex of $A$ would be simplicial. Thus, (iii) holds. To prove (iv), we fix a cut-partition $(A,B,C)$ of $G$ such that $G[A] \in \mathcal{G}$ (thus, $A$ is a clique); we need to show that $|C| \geq 2k-1$. Note first that $|A \cup C| \geq |N_G[A]| \geq \delta(G)+1 \geq k^2+k$. Thus, if $|A| \leq k+1$, it follows that $|C| \geq k^2-1 \geq 2k-1$ (we use the fact that $k \geq 2$), and we are done. So assume that $|A| \geq k+2$. Since $A$ is a clique of $G$, we know that $A$ is a clique of one of the $k+1$ copies of $H$ used to construct $G$, and so by (b), $|C| \geq |N_G(A)| \geq |N_H(A)| \geq 2k-1$. This proves (iv), and we are done.

\subsection*{Proof of part (3): a superlinear lower bound for $\varphi$}

In this subsection, we prove part (3) of Theorem~\ref{th:bounds}. In view of Proposition~\ref{prop:bounds}, it suffices to show that that for all positive integers $k$, $\varphi(k) > \frac{1}{4}k\log_2k$. We prove this in two stages. We first prove Lemma~\ref{lower-phi-construction} (stated below), which gives a slightly better lower bound for $\varphi(k)$ in the case when $k$ is a power of~$2$. We prove Lemma~\ref{lower-phi-construction} by constructing a suitable graph that belongs to the $k$-closure of the class of all complete graphs. We then use Lemma~\ref{lower-phi-construction} and the fact that $\varphi$ is non-decreasing to prove that $\frac{1}{4}k \log_2k < \varphi(k)$ (see Lemma~\ref{lb-phi-all-k}).

\begin{lemma} \label{lower-phi-construction} Let $m$ be a positive integer, and let $k = 2^m$. Then $\varphi(k) \geq \frac{1}{2}mk+k-1$. 
\end{lemma}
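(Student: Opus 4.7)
The plan is to exhibit a non-complete graph $H$ lying in the $k$-closure $\mathcal{G}^k$ of the class $\mathcal{G}$ of all complete graphs, such that every non-empty clique $A$ of $H$ with $N_H[A] \subsetneq V(H)$ satisfies $|N_H(A)| \geq \tfrac{1}{2}mk + k - 1$. This suffices to conclude $\varphi(k) \geq \tfrac{1}{2}mk + k - 1$: for any cut-partition $(A, B, C)$ of $H$ with $H[A] \in \mathcal{G}$, the set $B$ must lie in $V(H) \sm N_H[A]$, so the minimum $|C|$ is attained by taking $B = V(H) \sm N_H[A]$ and $C = N_H(A)$, giving $|C| = |N_H(A)|$.

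The construction is by induction on an auxiliary level $i$ running from $0$ to $m$, producing a sequence $H_0, H_1, \dots, H_m \in \mathcal{G}^k$, each endowed with a designated ``boundary'' structure, such that every non-empty clique of $H_i$ with a complete non-neighbor has at least $\tfrac{ik}{2} + k - 1$ external neighbors; we then set $H = H_m$. For the base case, $H_0$ is a small non-complete graph (for instance, several copies of $K_{k+1}$ glued along a common $K_k$), and one verifies directly that every clique with a complete non-neighbor has at least $k - 1$ external neighbors. For the inductive step, $H_{i+1}$ is obtained by taking two disjoint copies of $H_i$ and gluing them along a carefully chosen set $S$ of $k/2$ boundary vertices, using that $k = 2^m$ is even for $m \geq 1$; since $k/2 \leq k$, we automatically have $H_{i+1} \in \mathcal{G}^k$. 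The set $S$ is selected so that any clique $A$ of one copy of $H_i$ which intersects $S$, or has many neighbours inside $S$, picks up at least $k/2$ new external neighbours from the opposite copy through the bridge $S$.

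The main obstacle is that a clique $A$ disjoint from the bridge $S$ gains no new external neighbours from gluing; the inductive invariant for $H_i$ must therefore already guarantee the stronger bound $\tfrac{(i+1)k}{2} + k - 1$ for such cliques. One reconciles this by strengthening the invariant so that only a restricted family of ``boundary'' cliques of $H_i$ sit exactly at the current bound $\tfrac{ik}{2} + k - 1$, while all other cliques already satisfy the target bound for the next level; the set $S$ is then chosen to contain (or meet) precisely these boundary cliques. The verification of the inductive step reduces to a case analysis on how a clique $A$ of $H_{i+1}$ interacts with $S$ (contained in one copy with $A \cap S \neq \emptyset$, contained in one copy with $A \cap S = \emptyset$, contained inside $S$ itself), with the counting in each case following from the properties of the boundary structure of $H_i$. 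The delicate point is designing the boundary so that the $k/2$-increment propagates uniformly through the induction, rather than degrading at one of the intermediate levels.
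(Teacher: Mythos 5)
Your global reduction (exhibit a non-complete graph in the $k$-closure of the class of complete graphs in which every clique $A$ with a complete non-neighbour has $|N(A)|\geq \frac{1}{2}mk+k-1$) and the doubling induction over $m$ levels coincide with the paper's skeleton, but the whole difficulty of the lemma sits in the step you defer to a ``delicate point,'' and the specific mechanism you propose cannot deliver it. If $H_{i+1}$ is obtained from two disjoint copies of $H_i$ by the standard gluing (identification) along a set $S$ with $|S|=k/2$, then any clique $A$ of one copy with $A\cap S=\emptyset$ satisfies $N_{H_{i+1}}(A)=N_{H_i}(A)$ exactly, and iterating the construction shows that a vertex which never lies in a glue set keeps all of its neighbours inside its own leaf copy of $H_0$. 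Consequently, for a basic complete block $Q$ of a leaf copy, the clique $A=Q\smallsetminus(\text{glue vertices})$, when non-empty, has $|N_{H_m}(A)|\leq |Q\cap(\text{glue vertices})|+|N_{H_0}(Q)|$; so to reach $\frac{1}{2}mk+k-1$ either the base graph must already achieve the target bound (circular), or each block must absorb roughly $\frac{1}{2}mk$ glue vertices, while at level $i$ you have only $k/2$ glue vertices to share among $2^i$ pairwise disjoint blocks (at level $m-1$, one vertex per block at best). The ``strengthened invariant'' does not rescue this: the subsets of a block that avoid the one chosen glue vertex are themselves cliques sitting at the old bound and gain nothing, so the $k/2$-increment provably fails to propagate under identification-gluing.

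The paper's construction is genuinely different at exactly this point. It does not identify the two copies along a small set; instead it places, inside the interior of each big clique $A_j$, small attachment sets $B_j,C_j^{2^i+1},\dots,C_j^{2^{i+1}}$, each of size $k/2^{i+1}$, and makes them complete to corresponding attachment sets of the other copy. Putting the attachment sets inside the cliques yields the hit-or-miss argument that gives a gain to \emph{every} non-empty $K\subseteq A_j$: if $K$ meets an attachment set, it acquires the matching set of the other copy as new neighbours; if it misses it, the attachment set itself lies in $N(K)$. This is also why the per-level increment is $\frac{k}{2}+\frac{k}{2^{l}}$ rather than a uniform $\frac{k}{2}$, the geometric terms summing to the final $k-1$. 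The price is that membership in $\mathcal{G}^k$ is no longer automatic (no single small cutset separates the copies), and the paper certifies it by a staged decomposition: gadgets $H_{2^i+j}$ obtained by gluing complete graphs along $B_{2^i+j}$, then $\widetilde{G}_i$ and $\widetilde{G}_i'$, and one final gluing along exactly $k$ vertices. Both that cross-attachment design and its $\mathcal{G}^k$-certification are absent from your proposal, so as it stands the proof has a genuine gap.
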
 
\begin{proof} 
First, note that $\sum_{l=1}^m (\frac{k}{2}+\frac{k}{2^l}) = \frac{1}{2}mk+k-1$. Given $i \in \{0,\dots,m\}$, an {\em $i$-good partition} of a graph $G$ is a partition $(A_1,\dots,A_{2^i})$ of $V(G)$ such that the following hold: 
\begin{itemize} 
\item[(a)] for all $j \in \{1,\dots,2^i\}$, $A_j$ is a clique of $G$ of size $\sum_{l=1}^m (\frac{k}{2}+\frac{k}{2^l}) = \frac{1}{2}mk+k-1$; 
\item[(b)] for all $j \in \{1,\dots,2^i\}$, $|\partial_G(A_j)| = \sum_{l=1}^i (\frac{k}{2}+\frac{k}{2^l})$; 
\item[(c)] for all $j \in \{1,\dots,2^i\}$, and all non-empty $K \subseteq A_j$, $|N_G(K)| \geq \sum_{l=1}^i (\frac{k}{2}+\frac{k}{2^l})$; 
\item[(d)] every clique of $G$ of size greater than $k$ is included in one of $A_1,\dots,A_{2^i}$. 
\end{itemize} 
\noindent 
Given $i \in \{1,\dots,m\}$, we say that a graph is {\em $i$-good} provided it admits an $i$-good partition. 

Let $\mathcal{G}$ be the class of all complete graphs. Let us first show that if $\mathcal{G}^k$ contains an $m$-good graph, then $\varphi(k) \geq \frac{1}{2}mk+k-1$. Indeed, suppose that $G \in \mathcal{G}^k$ is an $m$-good graph, and let $(A_1,\dots,A_{2^m})$ be an $m$-good partition of $G$. By (a), $A_1,\dots,A_{2^m}$ are cliques of $G$, each of size $\frac{1}{2}mk+k-1$. By (d) then, $G$ is not a complete graph, and so $G \notin \mathcal{G}$. Let $(A,B,C)$ be a cut-partition of $G$ such that $G[A] \in \mathcal{G}$; we need to show that $|C| \geq \frac{1}{2}mk+k-1$. Since $N_G(A) \subseteq C$, it suffices to show that $|N_G(A)| \geq \frac{1}{2}mk+k-1$. Since $G[A] \in \mathcal{G}$, we know that $A$ is a clique of $G$. If $A$ is included in one of $A_1,\dots,A_{2^m}$, then by (c), we have that $|N_G(A)| \geq \sum_{l=1}^m (\frac{k}{2}+\frac{k}{2^l}) = \frac{1}{2}mk+k-1$, and we are done. So suppose that $A$ intersects at least two of $A_1,\dots,A_{2^m}$, say $A_1$ and $A_2$. By (d) then, $|A| \leq k$. Since $A_1$ and $A_2$ are cliques, both of which intersect $A$, we know that $(A_1 \cup A_2) \smallsetminus A \subseteq N_G(A)$; consequently, $|N_G(A)| \geq |A_1|+|A_2|-|A| \geq 2(\frac{1}{2}mk+k-1)-k \geq \frac{1}{2}mk+k-1$, and we are done. This reduces the problem to showing that $\mathcal{G}^k$ contains an $m$-good graph. We do this inductively, that is, we show that $\mathcal{G}^k$ contains an $i$-good graph for all $i \in \{0,\dots,m\}$. 

Clearly, any complete graph on $\frac{1}{2}mk+k-1$ vertices is $0$-good and belongs to $\mathcal{G}^k$. Next, fix $i \in \{0,\dots,m-1\}$, and suppose that $\mathcal{G}^k$ contains an $i$-good graph. Using the fact that $\mathcal{G}^k$ is closed under isomorphism, we fix isomorphic $i$-good graphs $G_i,G_i' \in \mathcal{G}^k$ on disjoint vertex-sets. Let $(A_1,\dots,A_{2^i})$ and $(A_{2^i+1},\dots,A_{2^{i+1}})$ be $i$-good partitions of $G_i$ and $G_i'$, respectively. Then for all $j \in \{1,\dots,2^i\}$, we have that $|A_j \smallsetminus \partial_{G_i}(A_j)| = \sum_{l=i+1}^m (\frac{k}{2}+\frac{k}{2^l}) \geq (2^i+1)\frac{k}{2^{i+1}}$, and we let $B_j,C_j^{2^i+1},\dots,C_j^{2^{i+1}}$ be pairwise disjoint subsets of $A_j \smallsetminus \partial_{G_i}(A_j)$, each of size $\frac{k}{2^{i+1}}$. Similarly, for all $j \in \{1,\dots,2^i\}$, we let $B_{2^i+j},C_{2^i+j}^1,\dots,C_{2^i+j}^{2^i}$ be pairwise disjoint subsets of $A_{2^i+j} \smallsetminus \partial_{G_i'}(A_{2^i+j})$, each of size $\frac{k}{2^{i+1}}$. (Graphs $G_i$ and $G_i'$ are represented in Figure~\ref{fig-phiini}.) Let $G_{i+1}$ be the graph on the vertex-set $\bigcup_{j=1}^{2^{i+1}} A_j$, with adjacency as follows: 
\begin{itemize} 
\item $G_{i+1}[\bigcup_{j=1}^{2^i} A_j] = G_i$ and $G_{i+1}[\bigcup_{j=1}^{2^i} A_{2^i+j}] = G_i'$; 
\item for all $j \in \{1,\dots,2^i\}$, $B_j$ is complete to $\bigcup_{l=1}^{2^i} C_{2^i+l}^j$, and $B_{2^i+j}$ is complete to $\bigcup_{l=1}^{2^i} C_l^{2^i+j}$; 
\item there are no other edges between $\bigcup_{j=1}^{2^i} A_j$ and $\bigcup_{j=1}^{2^i} A_{2^i+j}$ in $G_{i+1}$. 
\end{itemize} 
\noindent 
(The graph $G_{i+1}$ is represented in Figure~\ref{fig-phifinal}.) We claim that $G_{i+1}$ is an $(i+1)$-good graph, and that $G_{i+1} \in \mathcal{G}^k$. We first prove the latter. For all $j \in \{1,\dots,2^i\}$, let $H_{2^i+j} = G_{i+1}[B_{2^i+j} \cup \bigcup_{l=1}^{2^i} C_l^{2^i+j}]$; then $H_{2^i+j}$ is obtained by gluing complete graphs on the vertex-sets $B_{2^i+j} \cup C_1^{2^i+j},\dots,B_{2^i+j} \cup C_{2^i}^{2^i+j}$ along the set $B_{2^i+j}$ of size $\frac{k}{2^{i+1}} < k$, and consequently, $H_{2^i+j} \in \mathcal{G}^k$. Next, set $\widetilde{G}_i = G_{i+1}[\bigcup_{j=1}^{2^i} (A_j \cup B_{2^i+j})]$ and $\widetilde{G}_i' = G_{i+1}[\bigcup_{j=1}^{2^i} (A_{2^i+j} \cup B_j)]$ (see Figure~\ref{fig-phimedleft}). Clearly, the graph $\widetilde{G}_i$ is obtained from $G_i$ by sequentially gluing the graphs $H_{2^i+1},\dots,H_{2^{i+1}}$ along the sets $\bigcup_{l=1}^{2^i} C_l^{2^i+1},\dots,\bigcup_{l=1}^{2^i} C_l^{2^{i+1}}$, respectively. Since $G_i,H_{2^i+1},\dots,H_{2^{i+1}} \in \mathcal{G}^k$, and since $|\bigcup_{l=1}^{2^i} C_l^j| = \frac{k}{2} < k$ for all $j \in \{1,\dots,2^i\}$, it follows that $\widetilde{G}_i \in \mathcal{G}^k$. Similarly, $\widetilde{G}_i' \in \mathcal{G}^k$. But $G_{i+1}$ is obtained by gluing $\widetilde{G}_i$ and $\widetilde{G}_i'$ along the set $\bigcup_{j=1}^{2^{i+1}} B_j$, and $|\bigcup_{j=1}^{2^{i+1}} B_j| = k$. This proves that $G_{i+1} \in \mathcal{G}^k$. 

\begin{figure}
\begin{center}
\includegraphics[scale=0.7]{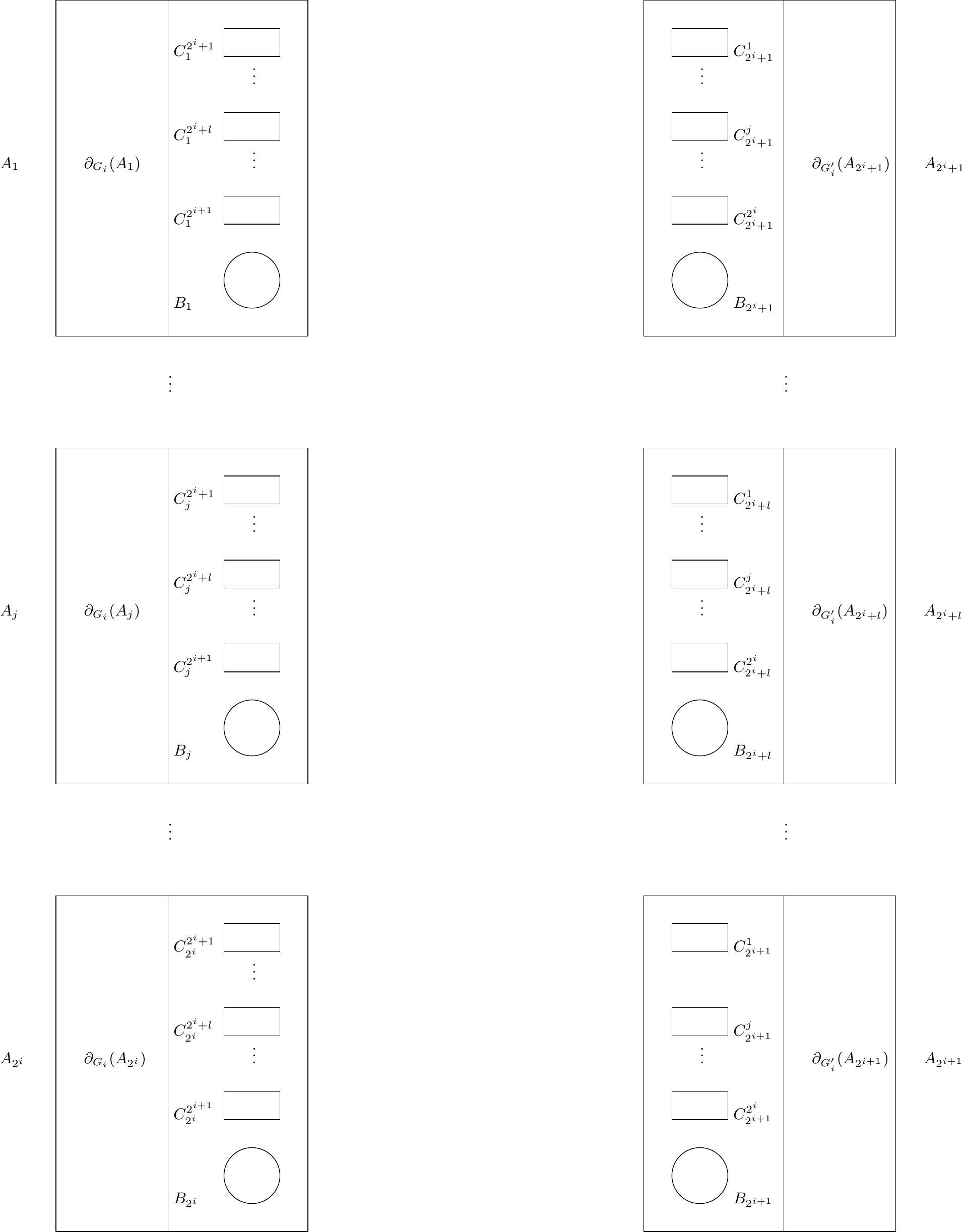} 
\end{center}
\caption{Graphs $G_i$ (left) and $G_i'$ (right)}\label{fig-phiini}
\end{figure}

\begin{figure}
\begin{center}
\includegraphics[scale=0.7]{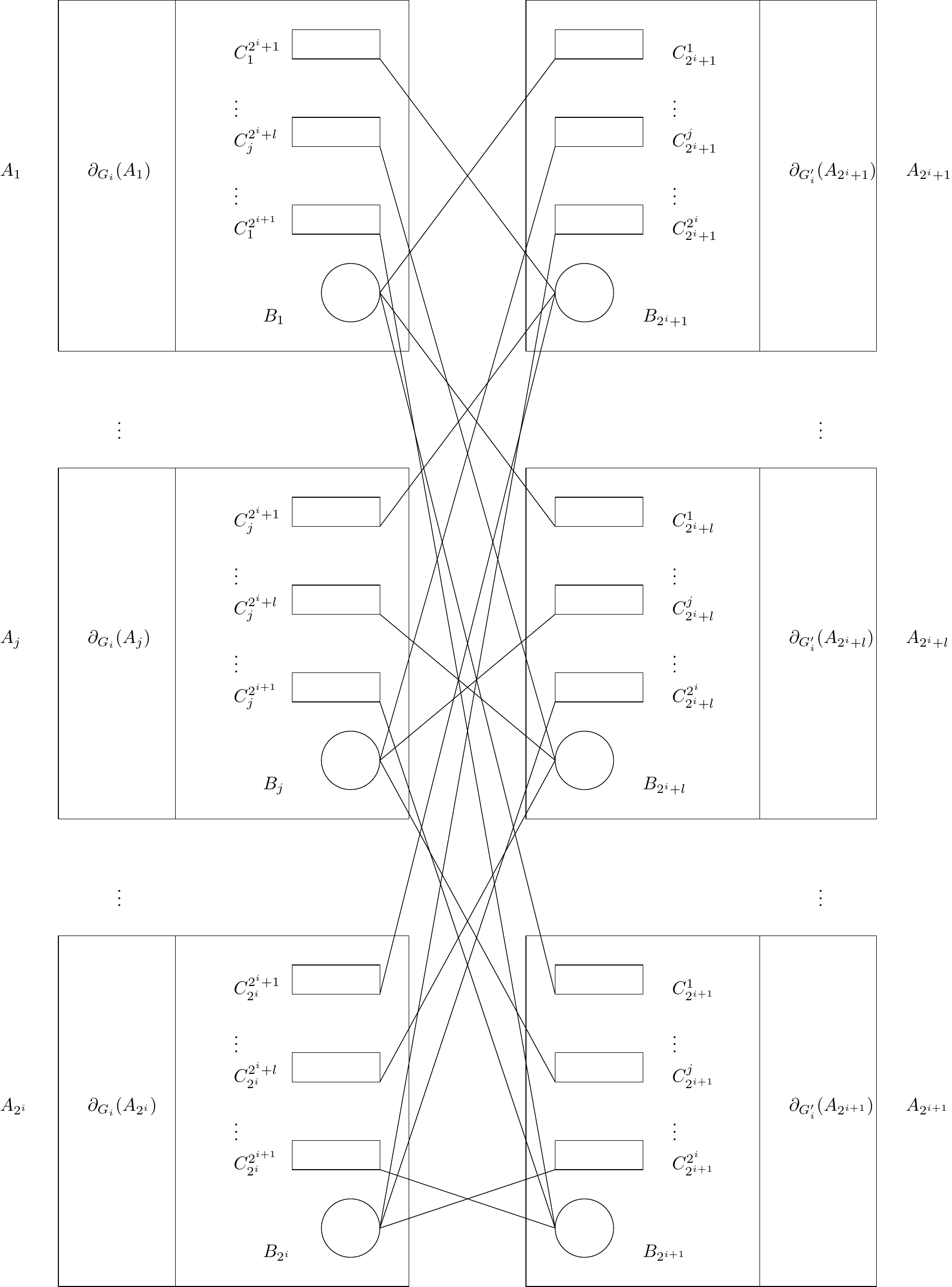}
\end{center}
\caption{Graph $G_{i+1}$}\label{fig-phifinal}
\end{figure}

\begin{figure}
\begin{center}
\includegraphics[scale=0.7]{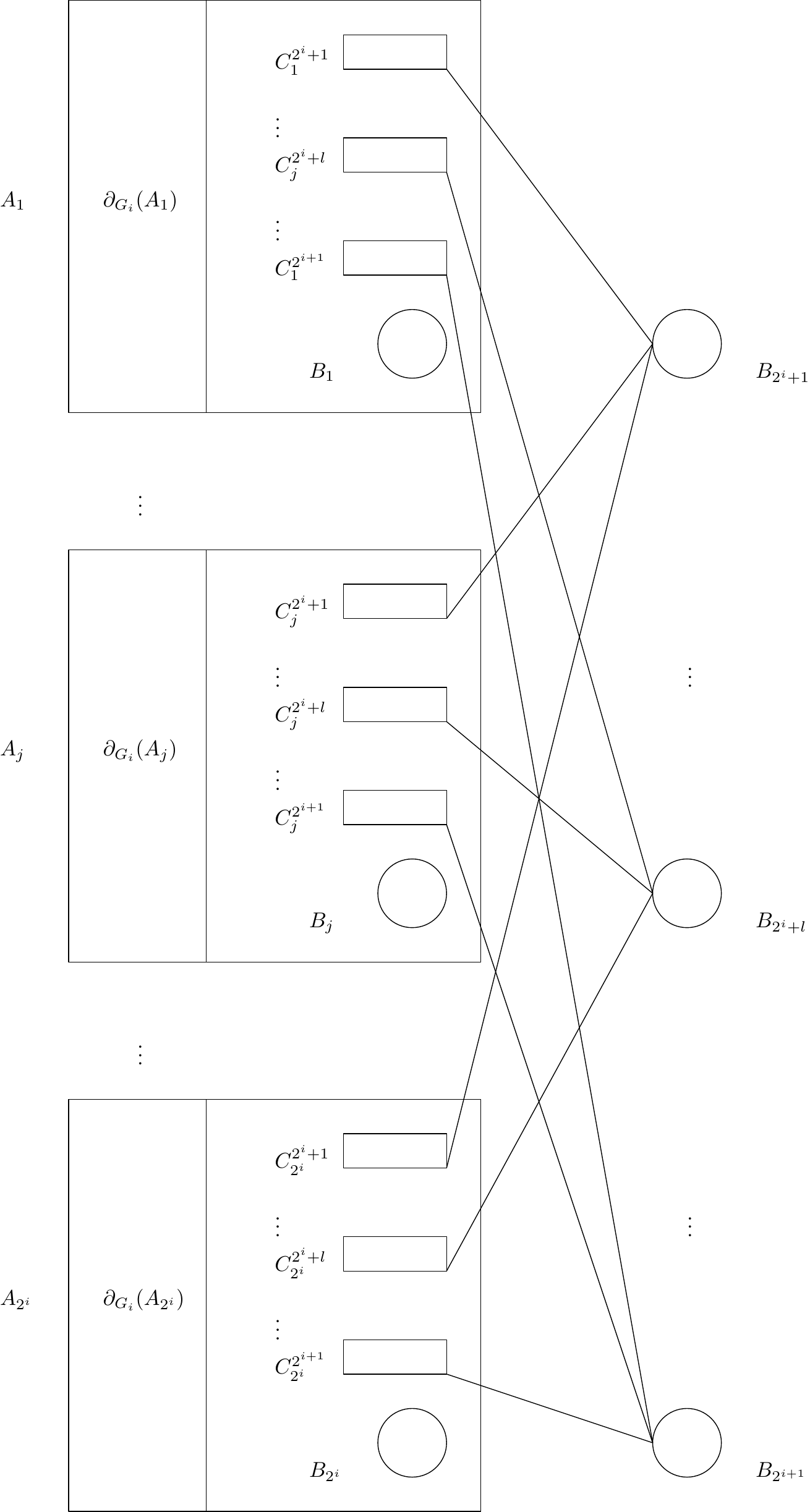}
\end{center}
\caption{Graph $\widetilde{G}_i$}\label{fig-phimedleft}
\end{figure}

It remains to show that $G_{i+1}$ is an $(i+1)$-good graph. We do this by showing that $(A_1,\dots,A_{2^{i+1}})$ is an $(i+1)$-good partition of $G_{i+1}$. Since $(A_1,\dots,A_{2^i})$ and $(A_{2^i+1},\dots,A_{2^{i+1}})$ are $i$-good partitions of $G_i$ and $G_i'$, respectively, we immediately deduce that $(A_1,\dots,A_{2^{i+1}})$ is a partition of $V(G_{i+1})$, and that for all $j \in \{1,\dots,2^{i+1}\}$, $A_j$ is a clique of $G_{i+1}$ of size $\frac{1}{2}mk+k-1$. Thus, (a) holds. To prove (b), we observe that for all $j \in \{1,\dots,2^i\}$, we have that $\partial_{G_{i+1}}(A_j) = \partial_{G_i}(A_j) \cup B_j \cup \bigcup_{l=1}^{2^i} C_j^{2^i+l}$, and since $|\partial_{G_i}(A_j)| = \sum_{l=1}^i (\frac{k}{2}+\frac{k}{2^l})$ and $|B_j| = |C_j^{2^i+1}| = \dots = |C_j^{2^{i+1}}| = \frac{k}{2^{i+1}}$, it follows that $|\partial_{G_{i+1}}(A_j)| = \sum_{l=1}^{i+1} (\frac{k}{2}+\frac{k}{2^l})$. Similarly, $|\partial_{G_{i+1}}(A_{2^i+j})| = \sum_{l=1}^{i+1} (\frac{k}{2}+\frac{k}{2^l})$ for all $j \in \{1,\dots,2^i\}$. Thus, (b) holds. We next prove (d). Suppose that $K$ is a clique of $G_{i+1}$ that intersects both $V(G_i)$ and $V(G_i')$. From the construction of $G_{i+1}$, we deduce that there exists for some $j \in \{1,\dots,2^i\}$ such that either $K \subseteq B_j \cup \bigcup_{l=1}^{2^i} C_{2^i+l}^j$ or $K \subseteq B_{2^i+j} \cup \bigcup_{l=1}^{2^i} C_l^{2^i+j}$. But $|B_j \cup \bigcup_{l=1}^{2^i} C_{2^i+l}^j| = |B_{2^i+j} \cup \bigcup_{l=1}^{2^i} C_l^{2^i+j}| = (2^i+1)\frac{k}{2^{i+1}} \leq k$ for all $j \in \{1,\dots,2^i\}$, and so $|K| \leq k$. This proves that every clique of $G_{i+1}$ of size greater than $k$ is included in one of $V(G_i)$ and $V(G_i')$; since $(A_1,\dots,A_{2^i})$ and $(A_{2^i+1},\dots,A_{2^{i+1}})$ are $i$-good partitions of $G_i$ and $G_i'$, respectively, it follows that every clique of $G_{i+1}$ of size greater than $k$ is included in one of $A_1,\dots,A_{2^{i+1}}$. This proves (d). 

It remains to prove (c). By symmetry, it suffices to show that for all non-empty $K \subseteq A_1$, we have that $|N_{G_{i+1}}(K)| \geq \sum_{l=1}^{i+1} (\frac{k}{2}+\frac{k}{2^l})$. Fix a non-empty set $K \subseteq A_1$, and set $\widetilde{K} = K \cup B_1 \cup \bigcup_{j=1}^{2^i} C_1^{2^i+j}$. Clearly, $K \subseteq \widetilde{K} \subseteq A_1$. Since $(A_1,\dots,A_{2^i})$ is an $i$-good partition of $G_i$, we know that $|N_{G_i}(\widetilde{K})| \geq \sum_{l=1}^i (\frac{k}{2}+\frac{k}{2^l})$. Next, if $B_1 \cap K = \emptyset$, then set $\widetilde{B}_1 = B_1$, and otherwise, set $\widetilde{B}_1 = (B_1 \smallsetminus K) \cup (\bigcup_{j=1}^{2^i} C_{2^i+j}^1)$. For all $j \in \{1,\dots,2^i\}$, if $K \cap C_1^{2^i+j} = \emptyset$, then set $\widetilde{C}_1^{2^i+j} = C_1^{2^i+j}$, and otherwise, set $\widetilde{C}_1^{2^i+j} = (C_1^{2^i+j} \smallsetminus K) \cup B_{2^i+j}$. Clearly, $|\widetilde{B}_1| \geq \frac{k}{2^{i+1}}$, and for all $j \in \{1,\dots,2^i\}$, $|\widetilde{C}_1^{2^i+j}| \geq \frac{k}{2^{i+1}}$. It is also clear that the sets $\widetilde{B}_1,\widetilde{C}_1^{2^i+1},\dots,\widetilde{C}_1^{2^{i+1}}$ are pairwise disjoint, and so it follows that $|\widetilde{B}_1 \cup \bigcup_{j=1}^{2^i} \widetilde{C}_1^{2^i+j}| \geq \frac{k}{2}+\frac{k}{2^{i+1}}$. Further, it is easy to see that $N_{G_{i+1}}(K) = N_{G_i}(\widetilde{K}) \cup \widetilde{B}_1 \cup \bigcup_{j=1}^{2^i} \widetilde{C}_1^{2^i+j}$. Since $|N_{G_i}(\widetilde{K})| \geq \sum_{l=1}^i (\frac{k}{2}+\frac{k}{2^l})$ and $|\widetilde{B}_1 \cup \bigcup_{j=1}^{2^i} \widetilde{C}_1^{2^i+j}| \geq \frac{k}{2}+\frac{k}{2^{i+1}}$, it follows that $|N_{G_{i+1}}(K)| \geq \sum_{l=1}^{i+1} (\frac{k}{2}+\frac{k}{2^l})$. This proves (c), and we are done. 
\end{proof} 
\begin{lemma} \label{lb-phi-all-k} For all positive integers $k$, $\varphi(k) > \frac{1}{4}k\log_2k$. 
\end{lemma}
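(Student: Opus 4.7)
The plan is to reduce to the case handled by Lemma~\ref{lower-phi-construction} by passing to the largest power of~$2$ not exceeding $k$ and then using the fact (noted in the paper just after the definitions of $\varphi$, $\psi$, and $\psi_c$) that $\varphi$ is non-decreasing. The case $k=1$ is trivial, since $\varphi(1) = 1 > 0 = \tfrac{1}{4}\cdot 1 \cdot \log_2 1$ by part~(1) of Theorem~\ref{th:bounds}. So I can assume $k \geq 2$ from the outset.

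First I would set $m = \lfloor \log_2 k \rfloor$, so that $m \geq 1$ and $2^m \leq k < 2^{m+1}$. Applying Lemma~\ref{lower-phi-construction} to the integer $2^m$ yields
$$\varphi(2^m) \geq \tfrac{1}{2}m\cdot 2^m + 2^m - 1 = 2^{m-1}(m+2) - 1,$$
and since $\varphi$ is non-decreasing and $2^m \leq k$, we obtain
$$\varphi(k) \geq \varphi(2^m) \geq 2^{m-1}(m+2) - 1.$$

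Next I would bound $\tfrac{1}{4}k\log_2 k$ from above using $k < 2^{m+1}$ and $\log_2 k < m+1$, which give (at least one inequality being strict)
$$\tfrac{1}{4}k\log_2 k < \tfrac{1}{4}\cdot 2^{m+1}(m+1) = 2^{m-1}(m+1).$$
Combining this with the lower bound on $\varphi(k)$, we are reduced to the inequality
$$2^{m-1}(m+2) - 1 \geq 2^{m-1}(m+1),$$
that is, $2^{m-1} \geq 1$, which holds for all $m \geq 1$. Chaining the estimates yields $\varphi(k) \geq 2^{m-1}(m+1) > \tfrac{1}{4}k\log_2 k$, as required.

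There is no real obstacle here: the construction doing the work has already been carried out in Lemma~\ref{lower-phi-construction}, and the present lemma is essentially an arithmetic bookkeeping step. The only point that requires a moment of care is checking that the loss incurred when replacing $k$ by the nearby power of~$2$ (a factor of at most $2$ in the value and a drop of at most $1$ in the exponent) is absorbed by the constant $\tfrac{1}{4}$ in the target bound; the computation above shows that it is.
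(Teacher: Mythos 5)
Your proof is correct and takes essentially the same route as the paper: pass to $2^m$ with $m=\lfloor\log_2 k\rfloor$ using the monotonicity of $\varphi$, and apply Lemma~\ref{lower-phi-construction}. The only difference is bookkeeping --- the paper treats $k\le 15$ separately via $\varphi(k)\ge k$ and then estimates with $2^m>\frac{k}{2}$ for $k\ge 16$, whereas your uniform chain $\varphi(k)\ge 2^{m-1}(m+2)-1\ge 2^{m-1}(m+1)>\frac{1}{4}k\log_2 k$ handles all $k\ge 2$ at once (with $k=1$ trivial), which is if anything slightly tidier.
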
 
\begin{proof} 
Clearly, the function $\varphi$ is non-decreasing, and by Proposition~\ref{prop:bounds}, we have that $\varphi(k) \geq k$ for all positive integers $k$. Now, fix a positive integer $k$. If $k \leq 15$, then $\frac{1}{4}k\log_2k < k \leq \varphi(k)$, and we are done. So assume that $k \geq 16$. Fix a positive integer $m$ such that $2^m \leq k < 2^{m+1}$; then $k \geq 2^m > \frac{k}{2}$. We now have the following: 
\begin{displaymath} 
\begin{array}{rclll} 
\varphi(k) & \geq & \varphi(2^m) & & \text{since $\varphi$ is non-decreasing} 
\\
\\
& \geq & \frac{1}{2}m2^m+2^m-1 & & \text{by Lemma~\ref{lower-phi-construction}} 
\\
\\
& > & \frac{1}{2}(\frac{k}{2})\log_2(\frac{k}{2})+\frac{k}{2}-1 & & \text{since $2^m > \frac{k}{2}$} 
\\
\\
& = & \frac{1}{4}k\log_2(k)+\frac{k}{4}-1 & & 
\\
\\
& > & \frac{1}{4}k\log_2k & & \text{since $k \geq 16$.} 
\end{array} 
\end{displaymath} 
\noindent 
This completes the argument. 
\end{proof}

\subsection*{Proof of part (4): the case $k = 2$} 

In this subsection, we prove part (4) of Theorem~\ref{th:bounds}, which states that $\varphi(2) = \psi(2) = \psi_c(2) = 5$. In view of Proposition~\ref{prop:bounds}, it suffices to prove the following two inequalities: $\varphi(2) \geq 5$ and $\psi_c(2) \leq 5$. 

\medskip

Let us first show that $\varphi(2) \geq 5$. We denote by $K_6 \smallsetminus e$ 
the graph obtained by deleting one edge from the complete graph on six vertices 
($K_6 \smallsetminus e$ is the graph $G_4$ represented in Figure~\ref{fig:G45}). 
Let $\mathcal{G}$ be the class of all induced subgraphs of $K_6\sm e$. Graphs
$G_1, \dots, G_9$ are represented in Figures~\ref{fig:G12}--\ref{fig:G89}. 
They are all taken from $\cal G$, or obtained from previous graphs by adding a 
vertex of degree two (by Lemma~\ref{l:addk}, this preserves membership in 
${\cal G}^2$), or by gluing previously constructed graphs along two vertices. 
The only non-empty sets $X \subseteq V(G_9)$ such that $G_9[X] \in \mathcal{G}$ 
and $|N(X)| < 5$ are subsets of $\{v, w, x\}$. Let us build a copy of $G_9$, and 
obtain a new graph $G_{10}$ by gluing $G_9$ and its copy along $\{v,w\}$. There 
are two vertices arising from $x$, and we name them $x$ and $x'$. The only 
non-empty sets $X \subseteq V(G_{10})$ such that $G_{10}[X] \in {\cal G}$ and 
$|N(X)| < 5$ are subsets of $\{x, x'\}$. We now build a new graph $G_{11}$ by 
gluing three copies of $G_{10}$ along $\{x, x'\}$. In this graph, no non-empty 
set $X$ of vertices such that $G_{11}[X] \in \mathcal{G}$ satisfies $|N(X)| < 5$. 
This proves that $\varphi(2) \geq 5$.

\begin{figure}
\center
\includegraphics[height=2cm]{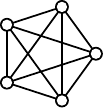}
\hspace{3em}
\includegraphics[height=2cm]{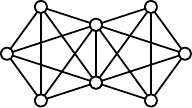}
\hspace{3em}
\includegraphics[height=2cm]{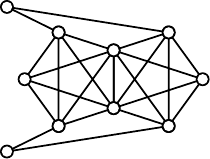}
\caption{Graphs $G_1$ (left), $G_2$ (middle), and $G_3$ (right) \label{fig:G12}}
\end{figure}

\begin{figure}
\center
\includegraphics[height=3cm]{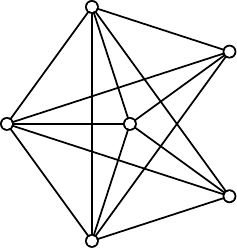}
\hspace{3em}
\includegraphics[height=3cm]{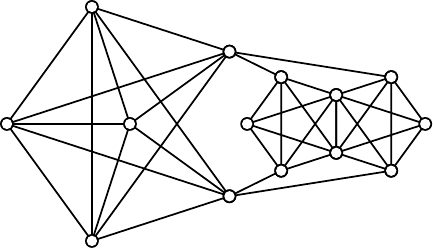}
\caption{Graphs $G_4$ (left) and $G_5$ (right) \label{fig:G45}}
\end{figure}

\begin{figure}
\center
\includegraphics[width=5cm]{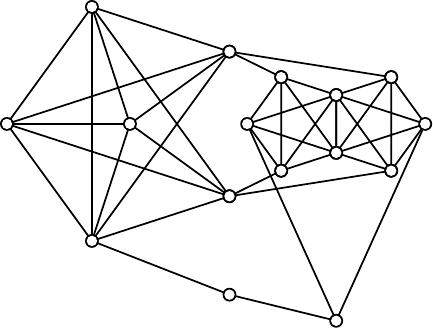}
\hspace{1em}
\includegraphics[width=5cm]{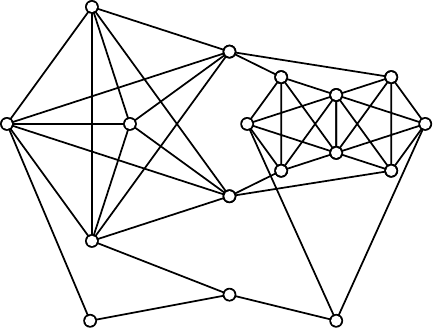}
\caption{Graphs $G_6$ (left) and $G_7$ (right) \label{fig:G67}}
\end{figure}

\begin{figure}
\center
\includegraphics[width=5cm]{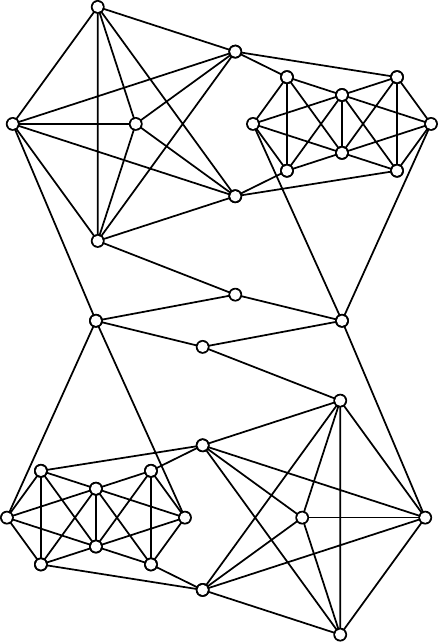}
\hspace{1em}
\includegraphics[width=5cm]{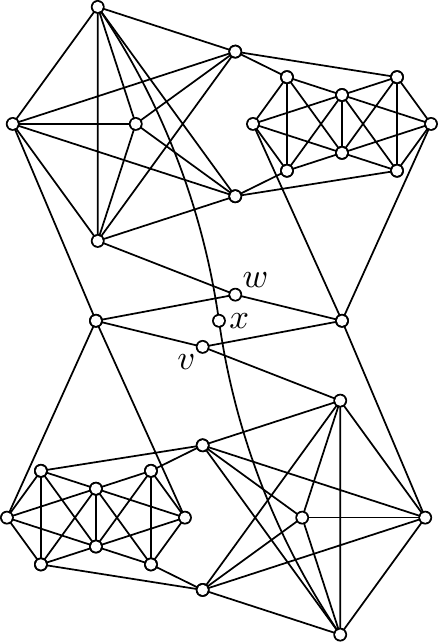}
\caption{Graphs $G_8$ (left) and $G_9$ (right) \label{fig:G89}}
\end{figure}

\medskip

\begin{table}[h]\center
\begin{tabular}{rc|rcccccc}
& {$|Y|$}              & $\geq 6$  & $5$ & $4$ & $3$ & $2$ & $1$   & $0$           \\ 
{$|X|$}              & &                 &     &     &     &       &       &       \\ \hline
$0 \phantom{i}$      & & $0 \phantom{i}$ & $0$ & $0$ & $0$ & $0$   & $0$   & $0$   \\
$1 \phantom{i}$      & & $0 \phantom{i}$ & $1$ & $1$ & $1.5$ & $2$ & $2$   & $2$   \\
$2 \phantom{i}$      & & $0 \phantom{i}$ & $1$ & $2$ & $2$ & $2.5$ & $2.5$ & $2.5$ \\
$3 \phantom{i}$      & & $0 \phantom{i}$ & $1$ & $2$ & $3$ & $3$   & $3$   & $3$   \\
$4 \phantom{i}$      & & $0 \phantom{i}$ & $1$ & $2$ & $3$ & $4$   & $4$   & $4$   \\
$\geq 5 \phantom{i}$ & & $0 \phantom{i}$ & $1$ & $2$ & $3$ & $4$   & $4$   & $6$   \\
\end{tabular}
\caption{Power of $(X,Y)$ \label{t:wc}}
\end{table}

It remains to show that $\psi_c(2) \leq 5$. We first need a couple of definitions. Given finite sets $X$ and $Y$, the {\em power} of the ordered pair $(X,Y)$, denoted by $p(X,Y)$, depends on the size of $X$ and $Y$, as shown in Table~\ref{t:wc}. Note that $p(X,Y)$ increases as $|X|$ increases and as $|Y|$ decreases. Furthermore, note that if $p(X,Y) > 0$, then $X \neq \emptyset$ and $|Y| \leq 5$. 

A {\em chunk} of a graph $G$ is an ordered pair $(X,Y)$ of disjoint subsets of $V(G)$ such that $N_G(X) \subseteq Y$ (equivalently: $N_G[X] \subseteq X \cup Y$). Note that if $(X,Y)$ is a chunk of a graph $G$ such that $p(X,Y) > 0$, then $X \neq \emptyset$ and $|N_G(X)| \leq |Y| \leq 5$.

We begin with two technical lemmas.

\begin{lemma} \label{delete-C} Let $(X_1,Y_1),\dots,(X_t,Y_t)$ be chunks of a graph $G$ such that the sets $X_1,\dots,X_t$ are pairwise disjoint, and let $C \subseteq V(G)$ be such that $|C| \leq 2$. For all $j \in \{1,\dots,t\}$, set $\widetilde{X}_j = X_j \smallsetminus C$ and $\widetilde{Y}_j = Y_j \cup (X_j \cap C)$. Then $(\widetilde{X}_1,\widetilde{Y}_1),\dots,(\widetilde{X}_t,\widetilde{Y}_t)$ are chunks of $G$, and the sets $\widetilde{X}_1,\dots,\widetilde{X}_t$ are pairwise disjoint. Furthermore, if $\sum_{j=1}^t p(X_j,Y_j) \geq 6$, then at least one of the following holds: 
\begin{enumerate}[(i)] 
\item $\sum_{j=1}^t p(\widetilde{X}_j,\widetilde{Y}_j) \geq 4$; 
\item $C$ contains a vertex of degree $3$, and $\sum_{j=1}^t p(\widetilde{X}_j,\widetilde{Y}_j) \geq 3.5$; 
\item $C$ contains a vertex of degree at most $2$, and $\sum_{j=1}^t p(\widetilde{X}_j,\widetilde{Y}_j) \geq 3$; 
\item there exists some $i \in \{1,\dots,t\}$ such that $|X_i| = 5$, $|Y_i| = 0$, $C \subseteq X_i$, $|C| = 2$, the two vertices of $C$ are adjacent, and $\sum_{j=1}^t p(\widetilde{X}_j,\widetilde{Y}_j) \geq 3$; 
\item $|C| = 2$, each vertex of $C$ is of degree at most $3$, and $\sum_{j=1}^t p(\widetilde{X}_j,\widetilde{Y}_j) \geq 3$; 
\item $|C| = 2$, one vertex of $C$ is of degree at most $2$ and the other of degree at most $3$, and $\sum_{j=1}^t p(\widetilde{X}_j,\widetilde{Y}_j) \geq 2.5$; 
\item $|C| = 2$, each vertex of $C$ is of degree at most $2$, and $\sum_{j=1}^t p(\widetilde{X}_j,\widetilde{Y}_j) \geq 2$. 
\end{enumerate} 
\end{lemma}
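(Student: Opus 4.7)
The first sentence of the conclusion is routine: since $\widetilde{X}_j \subseteq X_j$, we have $N_G(\widetilde{X}_j) \subseteq N_G(X_j) \cup (X_j \setminus \widetilde{X}_j) \subseteq Y_j \cup (X_j \cap C) = \widetilde{Y}_j$, and the $\widetilde{X}_j$'s inherit pairwise disjointness from the $X_j$'s. For the main statement I would introduce the per-chunk loss $\Delta_j := p(X_j,Y_j) - p(\widetilde{X}_j,\widetilde{Y}_j)$; this is non-negative because $p$ is non-decreasing in $|X|$ and non-increasing in $|Y|$ (by direct inspection of Table~\ref{t:wc}), and vanishes unless $X_j \cap C \neq \emptyset$. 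Since the $X_j$'s are pairwise disjoint and $|C| \leq 2$, at most two chunks contribute to the total loss $\Delta := \sum_j \Delta_j$, and $\sum_j p(\widetilde{X}_j,\widetilde{Y}_j) \geq 6 - \Delta$; more generally, $\sum_j p(\widetilde{X}_j,\widetilde{Y}_j) \geq p(\widetilde{X}_{j_0},\widetilde{Y}_{j_0})$ for any chunk $j_0$ singled out.

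Next I would extract three numerical facts by direct inspection of Table~\ref{t:wc}. First, if $|X_j \cap C| = 1$, then $\Delta_j \leq 2$, with equality only when $(|X_j|,|Y_j|)$ is $(1,0)$, $(1,1)$, $(1,2)$, or else $|X_j| \geq 5$ and $|Y_j| = 0$; in the first three sub-cases the removed vertex has degree at most $2$, because all of its neighbours lie in $(X_j \setminus \{v\}) \cup Y_j$, a set of size at most $2$, while in the last sub-case the surviving chunk already satisfies $p(\widetilde{X}_j,\widetilde{Y}_j) \geq 4$. Second, if $|X_j \cap C| = 2$, then $\Delta_j \leq 3$, with equality only when $|X_j| = 5$ and $|Y_j| = 0$; the next-worst value $\Delta_j = 2.5$ forces $|X_j| = 2$ and $|Y_j| \leq 2$, which caps both removed vertices' degrees at $3$. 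Third, whenever $|Y_j| = 0$, every vertex of $X_j$ has all its neighbours inside $X_j$.

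The argument then splits according to the distribution of $C$ among the $X_j$'s. If $|C| \leq 1$ only one chunk is affected, the first fact forces $\Delta \leq 2$, and outcome~(i) holds (the $|X_j| \geq 5$, $|Y_j| = 0$ equality sub-case being absorbed via the single-chunk bound of $4$). If $|C| = 2$ with $v \in X_{j_0}$ and $w \in X_{j_1}$, $j_0 \neq j_1$, I would add the two per-chunk bounds: whenever a loss of $2$ comes from a chunk with $|X_j| \geq 5$ and $|Y_j| = 0$, that chunk alone contributes at least $4$ post-deletion and outcome~(i) follows; otherwise losses of $2$ come with a degree-at-most-$2$ witness in $C$ and losses of $1.5$ with a degree-at-most-$3$ witness, and the finitely many possible pairs $(\Delta_{j_0},\Delta_{j_1})$ with total loss exceeding $2$ match exactly outcomes (iii), (v), (vi), or (vii). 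Finally, if $C \subseteq X_{j_0}$, the second fact reduces the only delicate situation to $|X_{j_0}| = 5$, $|Y_{j_0}| = 0$, $\Delta_{j_0} = 3$; the third fact then implies that both vertices of $C$ have all their neighbours in $X_{j_0}$, so if they are adjacent outcome~(iv) applies, and otherwise each has at most $3$ neighbours in $X_{j_0} \setminus C$, activating outcome~(v).

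The main obstacle will be the bookkeeping in the cross-chunk $|C| = 2$ case: one must verify, for each of the finitely many pairs of loss values (including mixed situations where one loss comes from a $(1,\cdot)$-type chunk and the other from a chunk with $|X_j| \geq 5$ and $|Y_j| = 0$), either that the surviving chunk contribution rescues outcome~(i) or that the degree witnesses guaranteed by the first fact match the precise degree clauses of the correct outcome. The outcome-(iv) branch is short but depends on the non-obvious dichotomy that, inside a chunk with $|X_j| = 5$ and $|Y_j| = 0$, non-adjacency of the two vertices of $C$ already degrades their degrees to at most $3$ and hands us outcome~(v) as a consolation.
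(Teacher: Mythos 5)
Your outline follows essentially the same route as the paper: verify the chunk property, reduce to the at most two chunks meeting $C$, read off from Table~\ref{t:wc} how much $p$ can drop when one or two vertices are moved from $X_j$ to $Y_j$, record degree witnesses in the extremal cases, and handle the $|X_j|=5$, $|Y_j|=0$, $C\subseteq X_j$ situation by the adjacency dichotomy that yields (iv) or (v). Your three ``numerical facts'' are correct (loss at most $2$ per removed vertex, with the equality and near-equality cases exactly as you list them), and your treatment of the one-chunk cases and of the special $(5,0)$ chunk agrees with the paper's.

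There is, however, one concrete slip in the cross-chunk bookkeeping. You claim that every pair $(\Delta_{j_0},\Delta_{j_1})$ with total loss exceeding $2$ lands in one of (iii), (v), (vi), (vii) (after the $(\geq 5,0)$ rescues into (i)). The pair $(1.5,1)$ does not: there the only guaranteed witness is a vertex of $C$ sitting alone in a chunk with $|X_j|=1$, $|Y_j|=3$, hence of degree at most $3$, while the other vertex of $C$ (e.g.\ removed from a chunk with $|X_j|=4$, $|Y_j|=2$) may have degree $5$; the post-deletion total is only guaranteed to be $\geq 3.5$. None of (iii), (v), (vi), (vii) applies in that situation, and (i) is not guaranteed either; this is precisely what outcome (ii) is for (if the witness has degree exactly $3$ one gets (ii), and if its degree is at most $2$ one falls back on (iii), exactly as in case (b) of the paper's proof). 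So you must add (ii) to your list of target outcomes; with your degree-at-most-$3$ witness for losses of $1.5$ already in hand, the repair is immediate. Relatedly, in the $C\subseteq X_{j_0}$ case you should state explicitly that the loss-$2.5$ subcase ($|X_{j_0}|=2$, $|Y_{j_0}|\leq 2$) is closed off by (v) (or (ii)); as written you only call the loss-$3$ subcase ``delicate'' and leave that resolution implicit.
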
 
\begin{proof} 
We may assume that $C \subseteq \bigcup_{j=1}^t X_j$, for otherwise, we set $C' = C \cap (\bigcup_{j=1}^t X_j)$, and we consider the set $C'$ instead of $C$. By construction, for all $j \in \{1,\dots,t\}$, we have that $\widetilde{X}_j \cup \widetilde{Y}_j = X_j \cup Y_j$, $\widetilde{X}_j \cap \widetilde{Y}_j = \emptyset$, and $\widetilde{X}_j \subseteq X_j$, and it follows that $N_G[\widetilde{X}_j] \subseteq N_G[X_j] \subseteq X_j \cup Y_j = \widetilde{X}_j \cup \widetilde{Y}_j$. Consequently, $(\widetilde{X}_1,\widetilde{Y}_1),\dots,(\widetilde{X}_t,\widetilde{Y}_t)$ are chunks of $G$, and the sets $\widetilde{X}_1,\dots,\widetilde{X}_t$ are pairwise disjoint. 

Suppose now that $\sum_{j=1}^t p(X_j,Y_j) \geq 6$. We need to show that at least one of (i)-(vii) holds. We may assume that $C \neq \emptyset$, for otherwise, (i) trivially holds. 

Suppose first that $p(X_j,Y_j) = 6$ for some $j \in \{1,\dots,t\}$; by symmetry, we may assume that $p(X_1,Y_1) = 6$. Then $|X_1| \geq 5$ and $|Y_1| = 0$, and so $|\widetilde{X}_1| \geq 3$ and $|\widetilde{Y}_1| \leq 2$. If $|\widetilde{X}_1| \geq 4$, then we see from Table~\ref{t:wc} that $p(\widetilde{X}_1,\widetilde{Y}_1) \geq 4$, and so (i) holds, and we are done. So assume that $|\widetilde{X}_1| = 3$. Since $|X_1| \geq 5$, this implies that $|X_1| = 5$ and $|C| = 2$. It follows that $|\widetilde{Y}_1| = 2$, and since $|\widetilde{X}_1| = 3$, we see from Table~\ref{t:wc} that $p(\widetilde{X}_1,\widetilde{Y}_1) = 3$. If the two vertices of $C$ are adjacent, then (iv) holds, and if they are non-adjacent, then outcome (v) holds, and in either case, we are done. 

From now on, we assume that $p(X_j,Y_j) \leq 4$ for all $j \in \{1,\dots,t\}$. Suppose first that $C$ is included in one of $X_1,\dots,X_t$; by symmetry, we may assume that $C \subseteq X_1$. Then $(\widetilde{X}_j,\widetilde{Y}_j) = (X_j,Y_j)$ for all $j \in \{2,\dots,t\}$. If $p(X_1,Y_1) \leq 2$, then $\sum_{j=2}^t p(\widetilde{X}_j,\widetilde{Y}_j) \geq 4$, and (i) holds. Next, if $p(X_1,Y_1) = 2.5$, then $|X_1| = 2$ and $|Y_1| \leq 2$, and so every vertex in $X_1$ (and in particular, every vertex in $C$) is of degree at most $3$, and we deduce that (ii) holds. Finally, if $3 \leq p(X_1,Y_1) \leq 4$, then using the fact that $|\widetilde{X}_1| \geq |X_1|-2$ and $|\widetilde{Y}_1| \leq |Y_1|+2$, we deduce from Table~\ref{t:wc} that $p(\widetilde{X}_j,\widetilde{Y}_j) \geq p(X_1,Y_1)-2$, and it follows that (i) holds. 

It remains to consider the case when $C$ is not included in any one of $X_1,\dots,X_t$. Then $|C| = 2$, and we may assume by symmetry that one vertex of $C$ belongs to $X_1$ and the other to $X_2$. Then for each $j \in \{1,2\}$, we have that $|\widetilde{X}_j| = |X_j|-1$ and $|\widetilde{Y}_j| = |Y_j|+1$, and we see from Table~\ref{t:wc} that $p(\widetilde{X}_j,\widetilde{Y}_j) \geq p(X_j,Y_j)-2$. Furthermore, for all $j \in \{3,\dots,t\}$, we have that  $(\widetilde{X}_j,\widetilde{Y}_j) = (X_j,Y_j)$. Now, by symmetry, it suffices to consider the following three cases: 
\begin{enumerate}[(a)] 
\item $p(X_j,Y_j)-1 \leq p(\widetilde{X}_j,\widetilde{Y}_j)$ for each $j \in \{1,2\}$; 
\item $p(X_1,Y_1)-2 \leq p(\widetilde{X}_1,\widetilde{Y}_1) < p(X_1,Y_1)-1$ and $p(X_2,Y_2)-1 \leq p(\widetilde{X}_2,\widetilde{Y}_2)$; 
\item $p(X_j,Y_j)-2 \leq p(\widetilde{X}_j,\widetilde{Y}_j) < p(X_j,Y_j)-1$ for each $j \in \{1,2\}$. 
\end{enumerate} 

If (a) is true, then (i) holds, and we are done. Suppose next that (b) is true. Then we deduce from Table~\ref{t:wc} that $|X_1| = 1$ and $|Y_1| \leq 3$. In particular then, the unique vertex of $X_1$ belongs to $C$, and the degree of this vertex is at most $|Y_1|$. If $|Y_1| = 3$, then outcome (ii) holds, and if $|Y_1| \leq 2$, then outcome (iii) holds, and in either case, we are done. It remains to consider the case when (c) holds. Then we see from Table~\ref{t:wc} that for each $j \in \{1,2\}$, $|X_j| = 1$ and $|Y_j| \leq 3$, and in particular, the unique vertex of $X_j$ belongs to $C$, and the degree of this vertex is at most $|Y_j|$. If $|Y_1| = |Y_2| = 3$, then outcome (v) holds; if $|Y_j| \leq 2$ and $|Y_{3-j}| = 3$ for some $j \in \{1,2\}$, then outcome (vi) holds; and if $|Y_j| \leq 2$ for each $j \in \{1,2\}$, then outcome (vii) holds. This completes the argument. 
\end{proof}

\begin{lemma} \label{chunk} Let $G$ be a graph on at least three vertices. Then there exist chunks $(X_1,Y_1),\dots,(X_t,Y_t)$ of $G$ that satisfy the following three properties: 
\begin{itemize} 
\item[(a)] $X_1,\dots,X_t$ are pairwise disjoint; 
\item[(b)] for all $j \in \{1,\dots,t\}$, if $|X_j| \geq 2$, then $G[X_j \cup Y_j]$ is $3$-connected; 
\item[(c)] $\sum_{j=1}^t p(X_j,Y_j) \geq 6$. 
\end{itemize} 
\end{lemma}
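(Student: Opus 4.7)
The plan is to prove Lemma~\ref{chunk} by induction on $|V(G)|$.

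For the base case $|V(G)|=3$, take the three singleton chunks $(\{v\}, N_G(v))$ with $v$ ranging over $V(G)$; each has $|X|=1$ and $|Y|\le 2$, hence power $2$ by Table~\ref{t:wc}, so the total power is $6$, condition (b) is vacuous, and (a) holds since the singletons are pairwise disjoint.

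For the inductive step, $|V(G)| \ge 4$. If $G$ is $3$-connected, I conclude directly: when $|V(G)|\ge 5$ the single chunk $(V(G), \emptyset)$ has $|X|\ge 5$, $|Y|=0$, and power $6$; when $|V(G)|=4$ one necessarily has $G=K_4$, and the four singleton chunks of power $1.5$ (coming from degree-$3$ vertices) each sum to $6$. Otherwise $G$ has a cutset of size at most $2$, giving a cut-partition $(A, B, C)$ with $|C|\le 2$; I apply the inductive hypothesis to $G[A\cup C]$ and $G[B\cup C]$ separately (handling the degenerate cases $|A\cup C|<3$ or $|B\cup C|<3$ by hand using singletons), producing two families of chunks of power sums at least $6$ each. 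To glue these families into one satisfying (a) in $G$, I invoke Lemma~\ref{delete-C} with the set $C$ on each family: after excision, the $X$-parts from the $A$-side lie in $A$ and those from the $B$-side lie in $B$, so (a) holds. Condition (b) is preserved because $G[X_j\cup Y_j]$ does not change under excision, and any chunk that loses all of $X_j$ to $C$ has $|\widetilde X_j|<2$, making (b) vacuous for it.

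The main obstacle is establishing (c), that the combined sum is at least $6$. The seven outcomes of Lemma~\ref{delete-C} give lower bounds on the surviving sum ranging from $4$ (outcome (i)) down to $2$ (outcome (vii)), but the weaker bounds come paired with structural information about $C$: either $C$ contains a vertex of small degree in the corresponding subgraph, or $|C|=2$ with both vertices inside a single chunk under specified adjacency conditions. In the unfavorable combinations I would add further singleton chunks $(\{c\}, N_G(c))$ at vertices $c\in C$ whose degrees in $G$ are small (their degrees in $G$ are bounded by the sum of the degrees in the two induced subgraphs $G[A\cup C]$ and $G[B\cup C]$), and these contribute positive power via Table~\ref{t:wc} without disturbing disjointness, since $c$ was moved out of every $\widetilde X_j$. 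The numerical constants in Table~\ref{t:wc} and the seven bounds of Lemma~\ref{delete-C} are calibrated so that every pair of outcomes from the two sides of the cut, together with any extra singletons at $C$, totals at least $6$. Verifying this exhaustively over all outcome pairs is the detailed bookkeeping at the heart of the argument.
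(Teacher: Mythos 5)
Your plan follows the paper's proof almost exactly: induction on $|V(G)|$, singleton chunks for the small base cases, the single chunk $(V(G),\emptyset)$ when $G$ is $3$-connected, otherwise a small cutset $C$, the induction hypothesis applied to the two sides, Lemma~\ref{delete-C} to excise $C$ from both chunk families, and singleton chunks $(\{c\},N_G(c))$ at the vertices of $C$ to recover the lost power. (Two small differences: the paper treats $|V(G)|=4$ as a base case with four singletons, so no $K_4$ discussion is needed; and once $|V(G)|\ge 5$ it takes a cutset of size exactly two, which guarantees that both sides of the cut have at least three vertices, so the degenerate cases you propose to handle ``by hand'' never arise.)

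The gap is that the step carrying all the content is asserted rather than proved: you say the constants of Table~\ref{t:wc} and the seven outcomes of Lemma~\ref{delete-C} are ``calibrated'' so that every pair of outcomes, plus extra singletons at $C$, totals at least $6$, and you explicitly defer this exhaustive check. That check is the heart of the lemma (the paper's cases (1)--(7)), and it is not purely mechanical. When the two surviving sums reach only $5.5$, $5$, $4.5$, or $4$ (for instance (ii) against (vii), (vi) against (vi), (vi) or (vii) against (vii)), you must exhibit a vertex $c\in C$ with $\deg_G(c)\le 5$, so that its singleton has power at least $1$; this does come from adding its degrees in $G[A\cup C]$ and $G[B\cup C]$, as you indicate. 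But in the combinations involving outcome (iv) no degree bound is stated at all: there you must use that $C$ lies inside a chunk with $|X_i|=5$ and $Y_i=\emptyset$ (so each vertex of $C$ has at most $4$ neighbors on that side) \emph{and} that the two vertices of $C$ are adjacent, so the common neighbor is not counted twice and the degree in $G$ is at most $4+2-1=5$ rather than $6$ --- a degree-$6$ vertex would give a singleton of power $0$, and, for example, (iv) against (vii) would then stall at $5$. So the bookkeeping does close, exactly as in the paper, but until it is written out you have a plan rather than a proof.
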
 
\begin{proof} 
We assume inductively that the claim holds for all graphs $G'$ such that $3 \leq |V(G')| < V(G)$. Suppose first that $3 \leq |V(G)| \leq 4$. We then set $t = |V(G)|$, we let $(X_1,\dots,X_t)$ be a partition of $V(G)$ into sets of size one, and for each $j \in \{1,\dots,t\}$, we set $Y_j = V(G) \smallsetminus X_j$. Clearly, $(X_1,Y_1),\dots,(X_t,Y_t)$ are chunks of $G$, they satisfy (a) and (b) by construction, and we see from Table~\ref{t:wc} that they satisfy (c). From now on, we assume that $|V(G)| \geq 5$. If $G$ is $3$-connected, then we set $t = 1$, $X_1 = V(G)$, $Y_1 = \emptyset$, we observe that $(X_1,Y_1)$ satisfies (a) and (b) by construction, and we see from Table~\ref{t:wc} that $p(X_1,Y_1) = 6$, and so $(X_1,Y_1)$ satisfies (c) as well. From now on, we assume that $G$ is not $3$-connected. Since $|V(G)| \geq 5$, we easily deduce that $G$ admits a cutset $C$ of size exactly two. Thus, there exist graphs $G_1$ and $G_2$ such that $3 \leq |V(G_1)|,|V(G_2)| < |V(G)|$, and such that $G$ is obtained by gluing $G_1$ and $G_2$ along $C$. Using the induction hypothesis, for each $i \in \{1,2\}$, we fix chunks $(X_1^i,Y_1^i),\dots,(X_{t_i}^i,Y_{t_i}^i)$ of $G_i$ such that the following hold: 
\begin{itemize} 
\item the sets $X_1^i,\dots,X_{t_i}^i$ are pairwise disjoint; 
\item for all $j \in \{1,\dots,t_i\}$, if $|X_j^i| \geq 2$, then $G_i[X_j^i \cup Y_j^i]$ is $3$-connected; 
\item $\sum_{j=1}^{t_i} p(X_j^i,Y_j^i) \geq 6$. 
\end{itemize} 
For all $i \in \{1,2\}$ and $j \in \{1,\dots,t_i\}$, set $\widetilde{X}_j^i = X_j^i \smallsetminus C$ and $\widetilde{Y}_j^i = Y_j^i \cup (X_j^i \cap C)$. Clearly, for all $i \in \{1,2\}$ and $j \in \{1,\dots,t_i\}$, we have that $\widetilde{X}_j^i \cup \widetilde{Y}_j^i = X_j^i \cup Y_j^i$ and $G[\widetilde{X}_j^i \cup \widetilde{Y}_j^i] = G_i[X_j^i \cup Y_j^i]$, and furthermore, since $V(G_1) \smallsetminus C$ is anti-complete to $V(G_2) \smallsetminus C$ in $G$, we know that $N_G[\widetilde{X}_j^i] = N_{G_i}[\widetilde{X}_j^i] \subseteq N_{G_i}[X_j^i] \subseteq X_j^i \cup Y_j^i = \widetilde{X}_j^i \cup \widetilde{Y}_j^i$. It follows that for all $i \in \{1,2\}$ and $j \in \{1,\dots,t_i\}$, $(\widetilde{X}_j^i,\widetilde{Y}_j^i)$ is a chunk of both $G$ and $G_i$. Next, recall that $|C| = 2$, and set $C = \{c_1,c_2\}$, $C_1 = N_G(c_1)$, and $C_2 = N_G(c_2)$; then $(\{c_1\},C_1)$ and $(\{c_2\},C_2)$ are chunks of $G$. Clearly, the chunks $(\{c_1\},C_1),(\{c_2\},C_2),(\widetilde{X}_1^1,\widetilde{Y}_1^1),\dots,(\widetilde{X}_{t_1}^1,\widetilde{Y}_{t_1}^1),(\widetilde{X}_1^2,\widetilde{Y}_1^2),\dots,(\widetilde{X}_{t_2}^2,\widetilde{Y}_{t_2}^2)$ of $G$ satisfy (a) and (b). It remains to show that they satisfy (c), that is, that $p(\{c_1\},C_1)+p(\{c_2\},C_2)+\sum_{i=1}^2\sum_{j=1}^{t_i} p(\widetilde{X}_j^i,\widetilde{Y}_j^i) \geq 6$. 

For each $i \in \{1,2\}$, we apply Lemma~\ref{delete-C} to the graph $G_i$, the chunks $(\widetilde{X}_1^i,\widetilde{Y}_1^i),\dots,(\widetilde{X}_{t_i}^i,\widetilde{Y}_{t_i}^i)$ of $G_i$, and the set $C$. By symmetry, we may assume that one of the following holds: 
\begin{itemize} 
\item[(1)] $G_1,(\widetilde{X}_1^1,\widetilde{Y}_1^1),\dots,(\widetilde{X}_{t_1}^1,\widetilde{Y}_{t_1}^1),C$ satisfy (i), and \\ $G_2,(\widetilde{X}_1^2,\widetilde{Y}_1^2),\dots,(\widetilde{X}_{t_2}^2,\widetilde{Y}_{t_2}^2),C$ satisfy one of (i)-(vii); 
\item[(2)] for each $i \in \{1,2\}$, $G_i,(\widetilde{X}_1^i,\widetilde{Y}_1^i),\dots,(\widetilde{X}_{t_i}^i,\widetilde{Y}_{t_i}^i),C$ satisfy one of (ii)-(v); 
\item[(3)] $G_1,(\widetilde{X}_1^1,\widetilde{Y}_1^1),\dots,(\widetilde{X}_{t_1}^1,\widetilde{Y}_{t_1}^1),C$ satisfy (ii), and \\ $G_2,(\widetilde{X}_1^2,\widetilde{Y}_1^2),\dots,(\widetilde{X}_{t_2}^2,\widetilde{Y}_{t_2}^2),C$ satisfy (vi); 
\item[(4)] $G_1,(\widetilde{X}_1^1,\widetilde{Y}_1^1),\dots,(\widetilde{X}_{t_1}^1,\widetilde{Y}_{t_1}^1),C$ satisfy (ii), and \\ $G_2(\widetilde{X}_1^2,\widetilde{Y}_1^2),\dots,(\widetilde{X}_{t_2}^2,\widetilde{Y}_{t_2}^2),C$ satisfy (vii); 
\item[(5)] $G_1,(\widetilde{X}_1^1,\widetilde{Y}_1^1),\dots,(\widetilde{X}_{t_1}^1,\widetilde{Y}_{t_1}^1),C$ satisfy one of (iii)-(v), and \\ $G_2,(\widetilde{X}_1^2,\widetilde{Y}_1^2),\dots,(\widetilde{X}_{t_2}^2,\widetilde{Y}_{t_2}^2),C$ satisfy (vi) or (vii); 
\item[(6)] for each $i \in \{1,2\}$, $G_i,(\widetilde{X}_1^i,\widetilde{Y}_1^i),\dots,(\widetilde{X}_{t_i}^i,\widetilde{Y}_{t_i}^i),C$ satisfy (vi); 
\item[(7)] $G_1,(\widetilde{X}_1^1,\widetilde{Y}_1^1),\dots,(\widetilde{X}_{t_1}^1,\widetilde{Y}_{t_1}^1),C$ satisfy (vi) or (vii), and \\ $G_2,(\widetilde{X}_1^2,\widetilde{Y}_1^2),\dots,(\widetilde{X}_{t_2}^2,\widetilde{Y}_{t_2}^2),C$ satisfy (vii). 
\end{itemize} 
If (1), (2), or (3) holds, then $\sum_{i=1}^2\sum_{j=1}^{t_i} p(\widetilde{X}_j^i,\widetilde{Y}_j^i) \geq 6$, and we are done. If (4), (5), or (6) holds, then $\sum_{i=1}^2\sum_{j=1}^{t_i} p(\widetilde{X}_j^i,\widetilde{Y}_j^i) \geq 5$, and we see by routine checking that $C$ contains a vertex of degree at most $5$ in $G$; we then deduce from Table~\ref{t:wc} that either $p(\{c_1\},C_1) \geq 1$ or $p(\{c_2\},C_2) \geq 1$, and it follows that $p(\{c_1\},C_1)+p(\{c_2\},C_2)+\sum_{i=1}^2\sum_{j=1}^{t_i} p(\widetilde{X}_j^i,\widetilde{Y}_j^i) \geq 6$. It remains to consider the case when (7) holds. Then $\sum_{i=1}^2\sum_{j=1}^{t_i} p(\widetilde{X}_j^i,\widetilde{Y}_j^i) \geq 4$. Furthermore, ${\rm deg}_G(c_1),{\rm deg}_G(c_2) \leq 5$, and we deduce from Table~\ref{t:wc} that $p(\{c_1\},C_1),p(\{c_2\},C_2) \geq 1$. It follows that $p(\{c_1\},C_1)+p(\{c_2\},C_2)+\sum_{i=1}^2\sum_{j=1}^{t_i} p(\widetilde{X}_j^i,\widetilde{Y}_j^i) \geq 6$. This completes the argument. 
\end{proof}

We are now ready to prove that $\psi_c(2) \leq 5$. Let $G$ be a graph. By the definition of $\psi_c$, we need to show that at least one of the following holds:
\begin{enumerate}[(i)] 
\item $G$ is $3$-connected; 
\item $G$ admits a cut-partition $(A,B,C)$ such that $G[A \cup C]$ is $3$-connected and $|C| \leq 5$; 
\item $G$ contains a vertex of degree at most $5$. 
\end{enumerate} 
We may assume that $|V(G)| \geq 3$, for otherwise (iii) holds, and we are done. Lemma~\ref{chunk} then guarantees that there exists a chunk $(X,Y)$ of $G$ such that the following hold: 
\begin{itemize} 
\item $p(X,Y) > 0$; 
\item if $|X| \geq 2$, then $G[X \cup Y]$ is $3$-connected. 
\end{itemize} 
Since $(X,Y)$ is a chunk of $G$, we know that $N_G(X) \subseteq Y$. Since $p(X,Y) > 0$, we deduce from Table~\ref{t:wc} that $X \neq \emptyset$ and $|Y| \leq 5$. If $|X| = 1$, then the fact that $|N_G(X)| \leq |Y| \leq 5$ implies that the unique vertex of $X$ is of degree at most $5$, and so (iii) holds. So assume that $|X| \geq 2$. Then $G[X \cup Y]$ is $3$-connected. If $X \cup Y = V(G)$, then $G$ is $3$-connected, and (i) holds. Otherwise, we set $A = X$, $B = V(G) \smallsetminus (X \cup Y)$, and $C = Y$. Since $N_G(X) \subseteq Y$ and $X \cup Y \subsetneq V(G)$, we see that $(A,B,C)$ is a cut-partition of $G$. Since $G[A \cup C] = G[X \cup Y]$ is $3$-connected and $|C| = |Y| \leq 5$, it follows that (ii) holds. This proves that $\psi_c(2) \leq 5$.

\section{Proof of Theorem~\ref{th:col}}
\label{sec:col}

The main goal of this section is to prove Theorem~\ref{th:col}. In this section, it is sometimes useful to allow graphs to be empty, and so we no longer implicitly assume that our graphs are non-null. We remark that a coloring of a graph can be seen as a partition of the vertex-set of that graph into stable sets (``color classes''); in particular then, for every graph $G$, $V(G)$ can be partitioned into $\chi(G)$ stable sets. We now prove a lemma, which we will use twice in the proof of Theorem~\ref{th:col}.

\begin{lemma} 
 \label{lemma:col} 
 Let $G$ be a graph, let $(X,Y)$ be a partition of $V(G)$ into two
 (possibly empty) sets, and let $q$ be the number of edges of $G$
 between $X$ and $Y$. Then $\chi(G) \leq
 \max\{\chi(G[X]),\chi(G[Y]),q+1\}$.
\end{lemma}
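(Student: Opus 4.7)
The plan is to combine proper colorings of $G[X]$ and $G[Y]$ by applying a suitable relabeling (an injection) on the colors used on $Y$ so that no cross edge becomes monochromatic. Set $m = \max\{\chi(G[X]),\chi(G[Y]),q+1\}$, so in particular $q \leq m-1$. Fix proper colorings $c_X \colon X \to \{1,\dots,\chi(G[X])\}$ of $G[X]$ and $c_Y \colon Y \to \{1,\dots,\chi(G[Y])\}$ of $G[Y]$; both use colors from $\{1,\dots,m\}$. I would then seek an injection $\sigma \colon \{1,\dots,\chi(G[Y])\} \to \{1,\dots,m\}$ for which the coloring $c$ defined by $c(x) = c_X(x)$ on $X$ and $c(y) = \sigma(c_Y(y))$ on $Y$ is proper on $G$. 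Such a $c$ is automatically proper on $G[X]$ and on $G[Y]$, so the only remaining requirement is that $\sigma(c_Y(y)) \neq c_X(x)$ for every edge $xy$ between $X$ and $Y$.

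To produce $\sigma$, I would introduce the bipartite auxiliary graph $H$ with left part $L = \{1,\dots,\chi(G[Y])\}$ and right part $R = \{1,\dots,m\}$, placing an edge between $j \in L$ and $i \in R$ exactly when \emph{no} cross edge $xy$ of $G$ satisfies $c_X(x) = i$ and $c_Y(y) = j$. An injection $\sigma$ of the required sort is precisely an $H$-matching saturating $L$, so by Hall's theorem it suffices to verify $|N_H(T)| \geq |T|$ for every $T \subseteq L$. Setting $F(T) = R \setminus N_H(T)$, observe that for each $i \in F(T)$ and each $j \in T$ there exists at least one cross edge labelled $(i,j)$; these $|F(T)| \cdot |T|$ label-pairs correspond to distinct cross edges, hence $|F(T)| \cdot |T| \leq q \leq m-1$, so $|F(T)| \leq (m-1)/|T|$. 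Hall's condition therefore reduces to $|T|(m-|T|) \geq m-1$.

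This final inequality is the one algebraic step in the argument: the concave quadratic $t(m-t)$ equals $m-1$ at both $t = 1$ and $t = m-1$, so it is at least $m-1$ for every $t \in \{1,\dots,m-1\}$. The boundary case $|T| = m$ (possible only when $\chi(G[Y]) = m$) needs a separate remark: a color $i \in F(L)$ would require at least one cross edge with $c_Y$-label $j$ for each of the $m$ elements $j \in L$, forcing at least $m$ distinct cross edges and contradicting $q \leq m-1$. The main obstacle is identifying the right bipartite framework and the correct counting; once it is set up, the Hall verification is essentially a short computation about the quadratic $t(m-t)$, and the degenerate cases $X = \emptyset$ or $Y = \emptyset$ fall out immediately.
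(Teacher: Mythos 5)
Your proof is correct. It is closely related to the paper's argument but runs through a different formulation: the paper takes optimal colorings of $G[X]$ and $G[Y]$, builds the bipartite compatibility graph whose two sides are the color classes of the two parts (joined when anti-complete), applies K\H onig's minimax to get a maximum matching together with a cover of the same size, merges matched classes to obtain $\chi(G) \leq s+t-|M|$, and then finishes with a three-way case analysis ($x=s$, $y=t$, or the cover forces $(s-x)(t-y) \leq q$, whence $\chi(G) \leq q+1$). You instead fix the target $m=\max\{\chi(G[X]),\chi(G[Y]),q+1\}$ from the start and look for an injective relabeling of the $Y$-colors into the full palette $\{1,\dots,m\}$ that avoids conflicts across the cut, certifying its existence by Hall's condition: the deficiency count $|F(T)|\cdot|T| \leq q \leq m-1$ together with $t(m-t)\geq m-1$ for $1\leq t\leq m-1$, plus your separate remark for $|T|=m$, is exactly what is needed, and your injectivity observation correctly handles properness inside $Y$. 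Since Hall and K\H onig are equivalent, the combinatorial core (a product-versus-sum count of cross edges between incompatible color classes) is the same in both proofs; what your version buys is that it dispenses with the case analysis and directly exhibits a proper coloring with $m$ colors, at the cost of the slightly less symmetric setup (one side is a palette rather than a family of color classes) and the boundary case $|T|=m$. The paper's version, in exchange, yields the intermediate bound $\chi(G)\leq s+t-|M|$ explicitly.
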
 
\begin{proof} 
 We may assume that both $X$ and $Y$ are non-empty, for otherwise the result is immediate. 
 We set $s = \chi(G[X])$ and $t = \chi(G[Y])$. Let $X_1, \dots, X_s$ be
 disjoint stable sets that partition $X$, and let $Y_1, \dots, Y_t$ be
 disjoint stable sets that partition $Y$. We now build an auxiliary
 bipartite graph $H$. We set $V(H) = \{X_1, \dots, X_s, Y_1, \dots, Y_t\}$,
 and we link $X_i$ to $Y_j$ in $H$ when $X_i$ is anti-complete to $Y_j$ in $G$. 
 These are the only edges of $H$, so $H$ is bipartite because 
 $\{X_1,\dots,X_s\}$ and $\{Y_1,\dots,Y_t\}$ are stable sets in $H$. 

 We now apply a classical result of K\H onig, which guarantees that in our
 bipartite graph $H$, there exists a matching $M$ and a set $Z$ of
 vertices such that $Z$ meets all edges of $H$ and $|M| = |Z|$. We set
 $x= |Z \cap \{X_1, \dots, X_s\}|$ and $y = |Z \cap \{Y_1, \dots,
 Y_t\}|$. Clearly, every edge in $M$ meets exactly one vertex of
 $Z$, and by symmetry, we may assume that $M = \{X_1Y_1, \dots,
 X_{x+y}Y_{x+y}\}$ and that $Z = \{X_1, \dots, X_x, Y_{x+1}, \dots,
 Y_{x+y}\}$.

 We now color $G$ as follows. For all $i\in \{1, \dots, x+y\}$, we
 give color $i$ to vertices of $X_i \cup Y_i$. In each of the remaing
 sets, namely $X_{|M|+1}, \dots, X_s$ and $Y_{|M|+1}, \dots, Y_t$, we
 color all vertices with the same color, but for each set, we
 choose a color not previously used. This is a proper coloring of $G$,
 because the $X_i$'s and $Y_j$'s are stable sets of $G$, and from the
 definitions of $H$ and $M$, $X_i$ is anti-complete to $Y_i$ in $G$ when 
 $1 \leq i \leq x+y$. By counting the number of colors that we used, we obtain
 $$ \chi(G) \leq |M| + (s- |M|) + (t - |M|) = s + t - |M| =
 (s-x) + (t-y).$$

If $x=s$, then $M$ covers $\{X_1, \dots, X_s\}$, and it follows that $y=0$ and 
$\chi(G) \leq (s-x) + (t-y) = t$. By an analogous argument, if $y=t$, then 
 $\chi(G) \leq s$. 

It remains to consider the case when $x<s$ and $y<t$. We then have that 
$s-x \geq 1$ and $t-y \geq 1$, and consequently, 
$(s-x) + (t-y) \leq (s-x)(t-y) + 1$. 
From the definition of $Z$, we know that for all $i \in
 \{x+1, \dots, s\}$ and all $j \in \{1, \dots, x\} \cup \{x+y+1,
 \dots, t\}$, $X_i$ and $Y_j$ are not linked by an edge of $H$. This
 means that there exists an edge of $G$ between $X_i$ and
 $Y_j$. Hence, the number $q$ of edges of $G$ is at least 
 $$ |\{x+1, \dots, s\}| \cdot |\{1, \dots, x\} \cup \{x+y+1,
 \dots, t\}| = (s-x)(t-y).$$
 So we have $$\chi(G) \leq (s-x) + (t-y) \leq (s-x)(t-y) +
 1 \leq q+1.$$ 
 
 From the two preceding paragraphs, we see that $\chi(G)$ is bounded
 above by either $s$, $t$, or $q+1$. The result follows.
\end{proof} 

\noindent 
An {\em optimal coloring} of a graph $G$ is a proper coloring of $G$
that uses exactly $\chi(G)$ colors. A graph $G$ is said to be {\em
critical} if for all non-empty sets $S \subseteq
V(G)$, we have that $\chi(G \smallsetminus S) < \chi(G)$. We are now
ready to prove Theorem \ref{th:col}, restated below. 

\begin{th:col} 
 Let $k$ be a positive and $c$ a non-negative integer, and let $G$ be
 a graph such that $\chi(G) > \max\{c+2k-2,2k^2\}$. Then $G$ contains
 a $(k+1)$-connected induced subgraph of chromatic number greater
 than $c$.
\end{th:col} 
\begin{proof} 
 Set $M = \max\{c+2k-2,2k^2\}$. We may assume without loss of
 generality that $\chi(G) = M+1$, and that $G$ is 
 critical. We may also assume that $G$ is not $(k+1)$-connected, for
 otherwise, $G$ itself is a $(k+1)$-connected induced subgraph of $G$
 of chromatic number greater than $c$, and we are done. Since $G$ is
 critical, $\delta(G) \geq \chi(G)-1 \geq
 2k^2$. Corollary~\ref{cor-main} now guarantees that $G$ admits a
 cut-partition $(A,B,C)$ such that $G[A \cup C]$ is $(k+1)$-connected
 and $w_B^k(C) \leq 2k^2-1$. Since $G$ is critical, we have that
 $\chi(G[B \cup C]) \leq M$. Now, we claim that $\chi(G[A \cup
 C]) > c$. Suppose otherwise, that is, suppose that $\chi(G[A \cup
 C]) \leq c$. Our goal is to use Lemma~\ref{lemma:col} to prove that
 $\chi(G) \leq M$, contrary to the fact that $\chi(G) = M+1$.

 Let $C_S$ be the set of all vertices in $C$ that are $k$-strong with
 respect to $B$, and let $C_W$ be the set of all vertices in $C$ that
 are $k$-weak with respect to $B$; thus, $C = C_S \cup C_W$ and $C_S
 \cap C_W = \emptyset$. Consider any optimal coloring of $G[B \cup
 C_S]$, let $S_1,\dots,S_r$ be the list of all color classes of this
 coloring that intersect $C_S$, and set $S = \bigcup_{j=1}^r
 S_j$. (If $C_S = \emptyset$, then we simply have that $r = 0$ and $S
 = \emptyset$.) Clearly, $C_S \subseteq S$, $r \leq |C_S|$,
 $\chi(G[S]) = r$, and $\chi(G[B \smallsetminus S]) = \chi(G[B \cup
 C_S])-r \leq M-r$. Since every vertex in $C_S$ is $k$-strong with
 respect to $B$, we have that $w_B(C_S) = k|C_S| \geq kr$, and since
 every vertex in $C_W$ is $k$-weak with respect to $B$, the number of
 edges between $C_W$ and $B$ is at most $w_B(C_W) = w_B(C)-w_B(C_S)
 \leq 2k^2-kr-1$. Since $A$ is anti-complete to $B$, it follows that
 the number of edges between $A \cup C_W$ and $B$ is at most
 $2k^2-kr-1 \leq M-r-1$.

 Since $r \leq |C_S|$ and $k|C_S| = w_B(C_S) \leq 2k^2-1$, we have
 that $r \leq |C_S| \leq 2k-1$. Suppose first that $r \leq 2k-2$. Then
 $\chi(G[A \cup C_W]) \leq c \leq M-r$. Since $\chi(G[B
 \smallsetminus S]) \leq M-r$, and since the number of edges between
 $A \cup C_W$ and $B$ is at most $M-r-1$, Lemma~\ref{lemma:col}
 implies that $\chi(G[(A \cup C_W) \cup (B \smallsetminus S)] \leq
 M-r$. Since $C_S \subseteq S$ and $\chi(G[S]) = r$, it follows that
 $\chi(G) \leq M$, contrary to the fact that $\chi(G) = M+1$.

 It remains to consider the case when $r = 2k-1$. Then $|C_S| =
 2k-1$. Since every set among $S_1,\dots,S_r$ intersects $C_S$, we
 deduce that $|S_j \cap C_S| = 1$ for all $j \in
 \{1,\dots,r\}$. Further, the number of edges between $A \cup C_W$
 and $B$ is at most $2k^2-kr-1 = k-1 < r$, and so $A \cup C_W$ is
 anti-complete to at least one of the sets $S_1 \smallsetminus
 C_S,\dots,S_r \smallsetminus C_S$, say to $S_r \smallsetminus
 C_S$. Let $c_S$ be the unique member of $S_r \cap C_S$. Consider any
 optimal coloring of $G[A \cup C_W \cup \{c_S\}]$, and let $S'$ be
 the color class of this coloring that contains $c_S$. Then $S' \cup
 S_r$ is a stable set, and it follows that $\chi(G[S \cup S']) =
 r$. On the other hand, $\chi(G[(A \cup C_W) \smallsetminus S']) =
 \chi(G[A \cup C_W])-1 \leq c-1 \leq M-r$. Recall that $\chi(G[B
 \smallsetminus S]) \leq M-r$, and that the number of edges between
 $A \cup C_W$ and $B$ is at most $M-r-1$. Lemma~\ref{lemma:col} now
 implies that $\chi(G[((A \cup C_W) \smallsetminus S') \cup (B
 \smallsetminus S)]) \leq M-r$. This, together with the fact that
 $C_S \subseteq S$ and $\chi(G[S \cup S']) \leq r$, implies that
 $\chi(G) \leq M$, contrary to the fact that $\chi(G) = M+1$. This
 completes the argument.
\end{proof} 

The following proposition was announced in Section~\ref{sec:main}. We 
remark that Theorem~\ref{chi-new} (which we derived from 
Theorem~\ref{th:col} in a more direct fashion in Section~\ref{sec:main}) 
can in fact be obtained as an immediate corollary of Theorem~\ref{th:col} 
and Proposition~\ref{chi-conn-class-equiv} by setting $h(n) = 
\max\{n+2k-2,2k^2\}$. 

\begin{proposition} \label{chi-conn-class-equiv} Let $k$ be a positive
 integer, and let $h:\mathbb{N} \rightarrow \mathbb{N}$ be a
 non-decreasing function. Then the following two statements are
 equivalent:
\begin{enumerate}[(i)] 
\item for all non-negative integers $c$, and all graphs $G$ such that
 $\chi(G) > h(c)$, $G$ contains an induced subgraph $H$ such that $H$
 contains no cutset of size at most $k$, and $\chi(H) > c$;
\item for all non-decreasing functions $f:\mathbb{N} \rightarrow
 \mathbb{N}$, and all hereditary classes $\mathcal{G}$,
 $\chi$-bounded by the function $f$, the class $\mathcal{G}^k$ is
 $\chi$-bounded by $h \circ f$.
\end{enumerate} 
\end{proposition}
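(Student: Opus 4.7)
The plan is to prove the two implications separately, using the simpler direction (i)$\Rightarrow$(ii) to mirror the proof of Theorem~\ref{chi-new}, and for the reverse direction (ii)$\Rightarrow$(i) to apply (ii) to a carefully chosen hereditary class.

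For (i)$\Rightarrow$(ii), I would take a non-decreasing $\chi$-bounding function $f$ of a hereditary class $\mathcal{G}$ and a graph $G\in\mathcal{G}^k$, and argue exactly as in the proof of Theorem~\ref{chi-new}: setting $c=f(\omega(G))$, if $\chi(G)>h(c)$, then (i) produces an induced subgraph $H$ of $G$ with no cutset of size at most $k$ and $\chi(H)>c$. Since $\mathcal{G}^k$ is hereditary, $H\in\mathcal{G}^k$, and Lemma~\ref{l:inG} gives $H\in\mathcal{G}$. Because $f$ is non-decreasing and $\omega(H)\le\omega(G)$, this yields $\chi(H)\le f(\omega(H))\le f(\omega(G))=c$, contradicting $\chi(H)>c$.

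For (ii)$\Rightarrow$(i), the key idea is to apply (ii) to the hereditary class $\mathcal{G}_c$ consisting of all graphs of chromatic number at most $c$. This class is trivially $\chi$-bounded by the constant (hence non-decreasing) function $f_c\equiv c$, so by (ii), $\mathcal{G}_c^k$ is $\chi$-bounded by the constant $h(c)$. Now suppose for contradiction that (i) fails for some $c$ and some graph $G$ with $\chi(G)>h(c)$, meaning every induced subgraph of $G$ that has no cutset of size at most $k$ has chromatic number at most $c$. The crucial step is to show that $G\in\mathcal{G}_c^k$; once this is done, the contradiction $\chi(G)\le h(c)$ is immediate.

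To establish $G\in\mathcal{G}_c^k$, I would proceed by induction on $|V(G)|$, noting first that the property ``every induced subgraph with no cutset of size at most $k$ has chromatic number at most $c$'' is itself hereditary. If $G$ has no cutset of size at most $k$, then by assumption $\chi(G)\le c$, so $G\in\mathcal{G}_c\subseteq\mathcal{G}_c^k$. Otherwise $G$ has a cutset $C$ with $|C|\le k$, so $G$ is the gluing of two strictly smaller induced subgraphs $G_1,G_2$ along $C$; both inherit the hereditary property, so the induction hypothesis gives $G_1,G_2\in\mathcal{G}_c^k$, and hence $G\in\mathcal{G}_c^k$ by definition of $k$-closure. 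This completes the argument. The only step that is not entirely mechanical is identifying the right class to plug into (ii); the natural candidate $\mathcal{G}_c$ together with the constant $\chi$-bounding function works cleanly because it converts (ii) into a uniform chromatic bound of $h(c)$, which is exactly what (i) asks for.
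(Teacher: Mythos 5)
Your proposal is correct and follows essentially the same route as the paper: the forward direction mirrors the proof of Theorem~\ref{chi-new}, and the reverse direction plugs into (ii) the class of graphs of chromatic number at most $c$, $\chi$-bounded by the constant function, to get the uniform bound $h(c)$ on its $k$-closure. The only cosmetic difference is that where you prove $G \in \mathcal{G}_c^k$ by induction on $|V(G)|$, the paper instead takes a minimal induced subgraph of $G$ not in $\mathcal{G}_c^k$ and observes that it has no cutset of size at most $k$ and chromatic number greater than $c$ --- the same argument in contrapositive form.
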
 
\begin{proof} 
Suppose first that the function $h$ satisfies (i). To show that $h$ satisfies (ii), fix a non-decreasing function $f:\mathbb{N} \rightarrow \mathbb{N}$, a hereditary class $\mathcal{G}$, $\chi$-bounded by $f$, and a graph $G \in \mathcal{G}^k$. We need to show that $\chi(G) \leq h \circ f(\omega(G))$. Suppose otherwise. Then $\chi(G) > h(f(\omega(G)))$, and so by (i), $G$ contains an induced subgraph $H$ such that $H$ contains no cutset of size at most $k$, and $\chi(H) > f(\omega(G))$. Since $H$ is an induced subgraph of $G$, we know that $\omega(H) \leq \omega(G)$, and since $f$ is non-decreasing, this implies that $\chi(H) > f(\omega(H))$. Since $\mathcal{G}$ is hereditary, so is $\mathcal{G}^k$. Since $G \in \mathcal{G}^k$, and $H$ is an induced subgraph of $G$, we know that $H \in \mathcal{G}^k$. Since $H$ contains no cutset of size at most $k$, Lemma~\ref{l:inG} implies that $H \in \mathcal{G}$. Since $\mathcal{G}$ is $\chi$-bounded by $f$, it follows that $\chi(H) \leq f(\omega(H))$, which is a contradiction. Thus, $h$ satisfies (ii). 

Suppose now that the function $h$ satisfies (ii). To show that $h$ satisfies (i), fix a non-negative integer $c$ and a graph $G$ such that $\chi(G) > h(c)$. Let $\mathcal{G}$ be the class of all graphs whose chromatic number is at most $c$. Then by (ii), the chromatic number of any graph in $\mathcal{G}^k$ is at most $h(c)$, and consequently, $G \notin \mathcal{G}^k$. Let $H$ be a minimal induced subgraph of $G$ such that $H \notin \mathcal{G}^k$. Since $\mathcal{G}^k$ is closed under the operation of gluing along at most $k$ vertices, the minimality of $H$ guarantees that $H$ does not admit a cutset of size at most $k$. Since $H \notin \mathcal{G}^k$, we know $H \notin \mathcal{G}$, and so by the definition of $\mathcal{G}$, $\chi(H) > c$. This proves that $h$ satisfies (i). 
\end{proof}

\end{document}